\renewcommand{\phi}{\varphi}
\renewcommand{\tilde}{\widetilde}
\renewcommand{\hat}{\widehat}
\newcommand{\Q}{\mathbb{Q}}
\newcommand{\R}{\mathbb{R}}
\newcommand{\N}{\mathbb{N}}
\newcommand{\Z}{\mathbb{Z}}
\newcommand{\A}{\mathbb{A}}
\DeclareMathOperator{\BOX}{Box}
\newtheorem{theorem}{Theorem}[section]
\newtheorem{lemma}[theorem]{Lemma}
\newtheorem{corollary}[theorem]{Corollary}
\newtheorem{question}[subsection]{Question}
\theoremstyle{definition}
\newtheorem{remark}[theorem]{Remark}
\newtheorem{example}[theorem]{Example}
\begin{document}

\title[Inequalities in Ehrhart theory]{
Additive number theory and \\
inequalities in Ehrhart theory} 
\author{Alan Stapledon}
\address{Department of Mathematics\\University of British Columbia\\ BC, Canada V6T 1Z2}
\email{astapldn@math.ubc.ca}

\keywords{}
\date{}
\thanks{The original version of this work was completed while the author was a graduate student at the 
University of Michigan. The author is grateful to Jeff Lagarias for some amazing insights and for bringing Kneser's theorem to his attention. He would like to thank Christian Haase, 
Benjamin Nill and Sam Payne 
for some stimulating conversations
%
during  a great week at FU Berlin in January, 2009, and would like to thank Jozsef Solymosi for pointing him to the work of Lev.} 


\begin{abstract}
We introduce a powerful connection between Ehrhart theory and additive number theory, and use it 
to produce infinitely many new classes of inequalities between the coefficients of the $h^*$-polynomial of a lattice polytope. This greatly improves upon the three known classes of inequalities, which 
were proved  using techniques from commutative algebra and combinatorics.  As an application, we deduce all possible `balanced' inequalities between the  coefficients of the $h^*$-polynomial  of a lattice polytope containing an interior lattice point, in dimension at most $6$.

\end{abstract}

\maketitle

\section{Introduction}

Ehrhart theory concerns the enumeration of lattice points in dilations of a lattice polytope. 
More precisely, if $P \subseteq \R^{d}$ is a $d$-dimensional polytope with integer vertices
and $f_{P}(m)$ denotes the number of integer-valued points in the $m$'th dilate of $P$, then a famous theorem of Ehrhart \cite{ehrhartpolynomial} asserts that $f_{P}(m)$ is a polynomial in $m$ of degree $d$, called the \emph{Ehrhart polynomial} of $P$. 
The central long-standing open problem in the field is the following:

\begin{question}\label{biggie}
Can one characterize all polynomials which can be interpreted as the Ehrhart polynomial of some lattice polytope?
\end{question}

The dimension $2$ case of Question~\ref{biggie} was settled by Scott in 1976 \cite{ScoConvex}, and the higher dimensional cases remain wide open. Although a complete answer to this problem could well be impossibly difficult, the goal of this paper is to introduce a new approach to Ehrhart theory which restricts the possible polynomials appearing as Ehrhart polynomials. 
More specifically, we introduce a connection 
between Ehrhart theory and  additive number theory which produces infinitely many new classes of inequalities satisfied by the coefficients of the Ehrhart polynomial. 

We remark that an answer to Question~\ref{biggie} would also be interesting from a geometric perspective. 
On the one hand, it would give a characterization of all Hilbert polynomials of polarized, projective toric varieties (Section 4.4 in \cite{FulIntroduction}), while, on the other hand, it would characterize the possible dimensions of orbifold cohomology of crepant partial resolutions of Gorenstein toric singularities (Theorem 4.6 in \cite{YoWeightI}, Introduction in \cite{KarEhrhart}). In particular, from these two perspectives, our results can be interpreted geometrically.

One may rephrase Question~\ref{biggie} by considering the corresponding generating series of the Ehrhart polynomial. More precisely, one may write 
\[
\sum_{m \geq 0} f_{P}(m)t^{m} = \frac{h^*(t)}{(1 - t)^{d + 1}},
\]
where $h^*(t) = h^*_{0} + h^*_{1}t + \cdots + h^*_{d}t^{d}$ is a polynomial of degree at most $d$ with integer coefficients, called the \emph{$h^*$-polynomial} of $P$; alternative names in the literature include $\delta$-polynomial and Ehrhart $h$-polynomial. In what follows, we will often identify the $h^*$-polynomial with its vector of coefficients $( h^*_{0} , h^*_{1}, \ldots, h^*_{d}    )$. 
The \emph{degree}
 $s$ of $P$ is defined to be the degree of  $h^*(t)$, 
 and the \emph{codegree} $l$ of $P$ is defined by $l = d + 1 - s$. 
Observe that $h^*(t)$ and $f_P(m)$ encode equivalent information, and, hence, 
Question~\ref{biggie} is equivalent to asking for a characterization of all polynomials which appear as the $h^*$-polynomial of some lattice polytope. 
Although there are almost no general results in this direction, in 1984, in their ground-breaking paper \cite{BMLattice}, Betke and McMullen proved that some linear inequalities exist between the coefficients of the 
$h^*$-polynomial, and 
challenged the mathematical community to find better inequalities.
A great deal of work has been done on this problem since that time, and we summarize the current state of knowledge below.

It follows from Ehrhart's original results that $h^*_0 = 1$ and $ 0 \le h^*_d \le  h^*_1$ (see, for example, \cite{YoInequalities}). The first deep result was Stanley's proof of the non-negativity of the coefficients  $h^*_i$ in \cite{StaDecompositions}. Stanley's proof used commutative algebra and the theory of Cohen Macauley rings, while a combinatorial proof was later given by Betke and McMullen in \cite{BMLattice}.
Using techniques of commutative algebra, Hibi proved in \cite{HibSome} that 
\begin{equation}\label{ineq1}
h^*_{d} + h^*_{d - 1} + \cdots + h^*_{d - i} \le h^*_{0} + h^*_{1} 
+ \cdots  + h^*_{i + 1}
 \textrm{ for }     i = 0 , \ldots, \lfloor \frac{d}{2}     \rfloor - 1,
\end{equation}
while Stanley proved in  \cite{StaHilbert1} that 
\begin{equation}\label{ineq2}
h^*_{0} + h^*_{1} + \cdots + h^*_{i} \leq h^*_{s} + h^*_{s - 1} + \cdots + h^*_{s - i}
\textrm{ for }     i = 0 , \ldots, \lfloor \frac{s}{2} \rfloor.
\end{equation}
In \cite{HibLower}, Hibi used combinatorial techniques  to prove that 
\begin{equation}\label{ineq3}
\textrm{ if } h^*_{d} \neq 0 \textrm{ then } 1 \leq h^*_{1} \leq h^*_{i} \textrm{ for }     i = 2 , \ldots, d - 1.
\end{equation}
Until recently, all known inequalities between the coefficients of the $h^*$-polynomial could be deduced from these three classes of inequalities. 

A new combinatorial approach to $h^*$-polynomials was introduced by the author in \cite{YoInequalities}. More specifically, motivated by weighted Ehrhart theory \cite{YoWeightI}, 
the author introduced natural polynomials $a(t)$ and $b(t)$ which encode the $h^*$-polynomial, and are defined by
\begin{equation}\label{a}
a_{i + 1} = h^*_{0} + \cdots + h^*_{i + 1} - h^*_{d} - \cdots - h^*_{d - i} \, \textrm{ for } - 1 \le i \le d - 1, 
\end{equation}
\begin{equation}\label{b}
b_{i} = -h^*_{0} - \cdots - h^*_{i} + h^*_{s} + \cdots + h^*_{s - i} \, \textrm{ for } 0 \le i \le s - 1.
\end{equation}
One verifies (Lemma 2.3 in \cite{YoInequalities}) that $a(t) = t^d a(t^{-1})$, $b(t) = t^{s - 1}b(t^{-1})$, and 
\begin{equation}\label{niceshirt}
(1 + t + \cdots + t^{l - 1})h^*(t) = a(t) + t^{l}b(t).
\end{equation}
Observe that Hibi and Stanley's inequalities \eqref{ineq1} and \eqref{ineq2} are equivalent
to the non-negativity of the coefficients of $a(t)$ and $b(t)$, respectively. In fact, the author proved that the coefficients of $a(t)$ are positive, and the following result may be viewed, along with \eqref{ineq3}, as the current state of knowledge. 

\begin{theorem}\cite[Theorem 2.14]{YoInequalities}\label{previous}
In the decomposition $(1 + t + \cdots + t^{l - 1})h^*(t) = a(t) + t^{l}b(t)$ above, the coefficients $b_i$ are non-negative and
$1 = a_{0} \leq a_{1} \leq a_{i}$
for $i = 2, \ldots, d - 1$.
\end{theorem}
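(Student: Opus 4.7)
Unfolding~\eqref{b}, the statement $b_i \ge 0$ is precisely $\sum_{j=0}^{i} h^*_j \le \sum_{j=s-i}^{s} h^*_j$, i.e., Stanley's inequality~\eqref{ineq2}, for $0 \le i \le \lfloor s/2 \rfloor$. A short computation from~\eqref{b} verifies $b_i = b_{s-1-i}$ (that is, $b(t) = t^{s-1} b(t^{-1})$), and this palindromic symmetry extends non-negativity to the full range $0 \le i \le s-1$. Setting $i = -1$ in~\eqref{a} gives $a_0 = h^*_0 = 1$, and the classical bound $h^*_d \le h^*_1$ gives $a_1 - a_0 = h^*_1 - h^*_d \ge 0$.

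\textbf{Main strategy for $a_1 \le a_i$, $2 \le i \le d-1$.} The polynomial $a(t)$ is palindromic of degree $d$ with $a_0 = a_d = 1$, and (by Hibi's inequality~\eqref{ineq1}) has non-negative coefficients. This is precisely the profile of the $h^*$-polynomial of a $d$-dimensional lattice polytope of Gorenstein index one with a unique interior lattice point. The plan is therefore to realize $a(t)$ as the $h^*$-polynomial of such a polytope $\tilde{P}$, and then apply Hibi's inequality~\eqref{ineq3} to $\tilde{P}$; since the top $h^*$-coefficient of $\tilde{P}$ is $a_d = 1 \ne 0$, this yields exactly $1 \le a_1 \le a_i$ for $2 \le i \le d - 1$. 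Palindromicity $a_i = a_{d-i}$ further reduces the work to $2 \le i \le \lceil d/2 \rceil$.

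\textbf{Realization of $\tilde{P}$ and main obstacle.} Building $\tilde{P}$ is the technical heart of the argument. One natural starting point, in the spirit of the author's weighted Ehrhart theory~\cite{YoWeightI}, is to join $P$ with the reflexive lattice simplex $R_{l-1}$ of dimension $l - 1$ whose $h^*$-polynomial equals $1 + t + \cdots + t^{l-1}$ (for example, the simplex with vertices $e_1, \dots, e_{l-1}, -e_1 - \cdots - e_{l-1}$). The join $P \ast R_{l-1}$ is a lattice polytope whose $h^*$-polynomial equals $(1 + t + \cdots + t^{l-1}) h^*_P(t) = a(t) + t^l b(t)$ by~\eqref{niceshirt}, so the key step is to separate off the $a(t)$-summand as the $h^*$-polynomial of a genuine $d$-dimensional lattice polytope, via a projection, a quotient, or a selective shelling/triangulation in which $t^l b(t)$ captures a complementary boundary contribution. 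The principal obstacle is verifying that this separation can be performed at the level of lattice polytopes rather than merely rational or weighted ones; if only a rational realization is available, one must instead invoke a rational analog of Hibi's inequality~\eqref{ineq3} applied to the weighted object.
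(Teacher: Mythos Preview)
Your first paragraph is fine: the non-negativity of $b_i$ is exactly Stanley's inequality~\eqref{ineq2}, and $a_0=1\le a_1$ follows from $h^*_d\le h^*_1$.

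The rest, however, has a genuine gap. You propose to \emph{realize} $a(t)$ as the $h^*$-polynomial of an auxiliary lattice polytope $\tilde P$ and then invoke Hibi's inequality~\eqref{ineq3}. But you never construct $\tilde P$; you only note that $a(t)$ ``has the profile'' of an $h^*$-polynomial of a reflexive polytope. Having the right symmetry and nonnegativity does not make a polynomial an $h^*$-polynomial --- that is essentially Question~\ref{biggie}. The join $P*R_{l-1}$ has $h^*$-polynomial $a(t)+t^l b(t)$, not $a(t)$, and you correctly identify that ``separating off $a(t)$'' is the obstacle; but you do not overcome it, and there is no reason to expect a lattice-polytope (or even rational-polytope) realization of $a(t)$ alone. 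So the argument stops exactly where the content should begin.

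The paper's route (from \cite{YoInequalities}, recalled here in Lemma~\ref{nonlemma}) is completely different and avoids any realization problem. One fixes a regular lattice triangulation $\mathcal S$ of $\partial P$ whose vertex set is $\partial P\cap N$, and proves the explicit formula
\[
a(t)=\sum_{F\in\mathcal S}\ \sum_{w\in\BOX(F)} t^{u(w)}\,h_F(t).
\]
The empty face contributes $h_{\mathcal S}(t)$, which by Lemma~\ref{patriots} is a degree~$d$ polynomial with symmetric, unimodal, positive integer coefficients; in particular its coefficients already satisfy $1=a_0\le a_1\le a_i$. Every non-empty face $F$ contributes terms $t^{u(w)}h_F(t)$ with $u(w)\ge 2$ (because the vertices of $\mathcal S$ exhaust the lattice points of $\partial P$), and $h_F(t)$ has degree $d-1-\dim F$, so these extra contributions land only in degrees $2,\dots,d-2$. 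Thus $a_1$ equals the degree-$1$ coefficient of $h_{\mathcal S}(t)$, while $a_i$ for $2\le i\le d-1$ is at least the degree-$i$ coefficient of $h_{\mathcal S}(t)$, and unimodality of $h_{\mathcal S}(t)$ gives $a_1\le a_i$. No auxiliary polytope is needed; the inequalities come directly from the combinatorics of the boundary triangulation.
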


We remark that Hibi's inequality \eqref{ineq3} would follow if we knew that $b_0 \le b_i$ for $0 \le i \le d - 1$, when $s = d$ and $l = 1$. This stronger inequality is proved later in Theorem~\ref{refinement}. 
We also note that the inequalities $a_1 \le a_i$  in Theorem~\ref{previous} are equivalent to the following refinement of \eqref{ineq1}, which was suggested, without proof, by Hibi in \cite{HibLower}.
\begin{equation}
h^*_{d - 1} + \cdots + h^*_{d - i}  \le h^*_{2} + \cdots 
+ h^*_{i + 1} \textrm{ for }     i = 0 , \ldots, \lfloor d/2     \rfloor - 1.
\end{equation}

In this paper, we 
prove infinitely many new classes of inequalities  between the coefficients of the $h^*$-polynomial. In fact, these inequalities are difficult to express in terms of the coefficients of the $h^*$-polynomial, but have a natural form in terms of the polynomials $a(t)$ and $b(t)$.  This result  is achieved by creating a highly technical, but extremely powerful, connection with additive number theory. In particular, one of our main tools is the following deep result of  Kemperman and Scherk. We refer the reader to \cite{LevRestricted} for a history of results in additive number theory.

\begin{theorem}[\cite{MSAdvanced}, \cite{KemComplexes}, \cite{KemSmall}]      
If $A$ and $B$ are finite  subsets of an abelian group and $A \cap (-B) = \{ 0 \}$, then 
\[
|A + B| \ge |A| + |B| - 1.
\]
\end{theorem}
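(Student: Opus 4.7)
The plan is to prove the inequality by induction on $|B|$ (by symmetry, $A$ and $B$ play interchangeable roles).

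Base case: if $|B| = 1$, the hypothesis forces $B = \{0\}$, so $|A+B| = |A| = |A|+|B|-1$; the case $|A|=1$ is analogous.

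Inductive step: assume $|A|, |B| \geq 2$ and pick $e \in A \setminus \{0\}$. The main tool is the Dyson $e$-transform
\[
A' := A \cup (B + e), \qquad B' := B \cap (A - e),
\]
which satisfies $A' + B' \subseteq A + B$, $|A'| + |B'| = |A| + |B|$, and $0 \in A' \cap B'$ (the last using $e \in A$). I will verify that $A' \cap (-B') = \{0\}$ by splitting it as
\[
[A \cap (-B) \cap (e-A)] \;\cup\; [(B+e) \cap (-B) \cap (e-A)];
\]
the first piece equals $\{0\}$ by hypothesis (using $e \in A$), while any element of the second piece would be forced to equal $e$ with $-e \in B$, contradicting $e \in A \setminus \{0\}$ together with $A \cap (-B) = \{0\}$. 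If $|B'| < |B|$, the inductive hypothesis applied to $(A', B')$ then gives $|A+B| \geq |A'+B'| \geq |A'|+|B'|-1 = |A|+|B|-1$.

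The main obstacle is the exceptional case in which $|B'| = |B|$ for every choice of $e \in A \setminus \{0\}$, equivalently $B + e \subseteq A$ for every such $e$. Here I would argue directly: the hypothesis $A \cap (-B) = \{0\}$ forbids $0$ from lying in $(A \setminus \{0\}) + B$, so combined with the exceptional inclusion, $(A \setminus \{0\}) + b \subseteq A \setminus \{0\}$ for every $b \in B$; a cardinality comparison (translation is a bijection) upgrades this to equality. It follows that $A \setminus \{0\}$ is stable under the subgroup $H := \langle B \rangle$ and hence is a union of $H$-cosets, none of which equals $H$ itself (since $0 \notin A \setminus \{0\}$). Because $B \subseteq H$, this forces $A \cap B = \{0\}$, so that
\[
A + B \;=\; B \;\sqcup\; (A \setminus \{0\})
\]
is a disjoint union of cardinality $|A|+|B|-1$, closing the argument.
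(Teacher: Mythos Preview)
The paper does not supply its own proof of this theorem; it is quoted as a classical result of Kemperman and Scherk and used as a black box throughout Sections~\ref{add} and~\ref{revisit}. So there is nothing in the paper to compare against directly.

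That said, your argument is correct and is essentially the classical Dyson-transform proof. A couple of comments on points that read tersely: in the verification that the second piece $(B+e)\cap(-B)\cap(e-A)$ is empty, the key step (which you leave implicit) is that any element $x$ there satisfies $e-x\in A$ and $-(e-x)=x-e\in B$, so $e-x\in A\cap(-B)=\{0\}$, forcing $x=e$; then $-e=-x\in B$ contradicts $e\in A\setminus\{0\}$. In the exceptional case, your observation that $A\setminus\{0\}$ is a union of $H$-cosets disjoint from $H$ itself is exactly what makes $B$ and $A\setminus\{0\}$ disjoint (since $B\subseteq H$), and then $A+B=B\cup\big((A\setminus\{0\})+B\big)=B\sqcup(A\setminus\{0\})$ gives the count on the nose. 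The induction is well-founded because it is on $|B|$ with the statement quantified over all $A$, and the exceptional branch covers precisely the situation where no choice of $e$ decreases $|B'|$.
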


In order to state our main results, we first introduce some notation. If $0 \le r \le r'$, then let 
$Q(r,r') \subseteq \R^{r + r' + 1}$ be the rational polyhedron defined by all $(r + r' + 1)$-tuples
$(x_0, x_1, \ldots, x_{r + r'})$ of non-negative real numbers satisfying: 
\begin{enumerate}
\item\label{cat1} $x_i \ge 1$ \;  for \;  $0 \le i \le r$, 
\item\label{cat2}  $x_i \ge \frac{r + 1}{2i + 1}$ \;  for \;  $r + 1 \le  i \le \lfloor \frac{r + r'}{2} \rfloor$,
\item\label{cat3} $x_{i} + \cdots + x_{2r - i} \ge 2r - 2i + 1$ \; for \;  
$0 \le  i \le r - 1$,
\item\label{cat4} $x_{i} + \cdots + x_{r + r' - i} \ge r  + 1 - 2i\frac{r + 1}{r + r' + 1}$ \, for \, 
$r  + 1 \le i < \frac{r + r'}{2}$. 
\end{enumerate}

\begin{example}\label{swim}
Consider the case when  $r = r'$ and $Q(r,r) \subseteq \R^{2r + 1}$. 
If we let $e_0, \ldots, e_{2r}$ denote the standard basis of $\R^{2r + 1}$, then one verifies that the vertices of $Q(r,r)$ are given by 
\[
\{ e_0 + \cdots + e_r + \sum_{i = 0}^{r - 1} e_{k_i}  \mid i \le k_i \le 2r - i 
\}.
\]
\end{example}

\begin{example}\label{swum}
Consider the case when  $r = 0$ and $Q(0,r') \subseteq \R^{r' + 1}$. 
If we let $e_0, \ldots, e_{r'}$ denote the standard basis of $\R^{r' + 1}$, then one verifies that the vertices of $Q(0,r')$ are given by 
\[
\{ \sum_{i = 0}^{\lfloor \frac{r'}{2} \rfloor} \frac{1}{2i + 1} e_{i} + 
\sum_{i = \lceil \frac{r'}{4}  \rceil}^{\lfloor \frac{r' - 1}{2} \rfloor}
( \frac{2}{r' + 1} - \frac{1}{2i + 1})e_{k_i} \mid i \le k_i \le r' - i 
\}.
\]

\end{example}

Our first main result is the following theorem. Although the statement is a little technical, we present 
simpler consequences and examples below. 

\begin{theorem}\label{superA}
With the notation above, if $0 \le r \le r'$,  
$d \ge 2r' + r + 7$, 
and $(\lambda_0, 
\ldots, \lambda_{r + r'})$ is a vertex of $Q(r,r')$, then 
\begin{equation*}
\lambda  a_1 +  \sum_{j = 0}^{r} a_{j + 2}  \le  a_{r' + 3} + 
\sum_{j = 0}^{r + r'} \lambda_j a_{r' + 4 + j}, 
\end{equation*}
where $\lambda = \sum_{j = 0}^{r + r'} \lambda_j - r$. 
\end{theorem}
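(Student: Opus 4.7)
The plan is to exploit the Kemperman--Scherk theorem via a lattice-point interpretation of the $a_i$. The first step is to realize $a_i$ as the cardinality of an explicit set $A_i$ of lattice points sitting at ``height'' $i$ with respect to a natural grading $\pi$ on the ambient lattice: concretely, by refining the combinatorial construction behind Theorem~\ref{previous} (which already establishes $a_i \ge a_1 \ge 1$ via counting), I would obtain sets $A_i \subseteq \pi^{-1}(i)$ with $|A_i| = a_i$, $0 \in A_0$, and the crucial Minkowski-sum compatibility
\[
A_i + A_j \subseteq A_{i + j}
\]
coming from addition of heights. Because the grading is $\ge 0$ on every $A_i$ and strictly positive on $A_i$ for $i \ge 1$, one has $A_i \cap (-A_j) = \{0\}$ whenever at least one of $i,j$ is positive.

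The second step is to feed these sets into the Kemperman--Scherk theorem, which produces the family of basic sumset inequalities
\[
a_{i+j} \ge a_i + a_j - 1
\]
valid throughout the range of indices controlled by the grading. Combined with the palindromic identity $a_i = a_{d-i}$ coming from $a(t) = t^d a(t^{-1})$, this yields a large system of linear inequalities relating small-index $a_i$ (the $a_1,\dots,a_{r+2}$ on the left-hand side) to middle-index $a_i$ (the $a_{r'+3},\dots,a_{2r'+r+4}$ on the right-hand side, once we use duality). The hypothesis $d \ge 2r' + r + 7$ is exactly what guarantees that all the indices produced in this process remain strictly below $d/2$ on one side and above $d/2$ on the other, so that the duality substitutions and the Kemperman--Scherk hypotheses are legitimate.

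The third step is the convex-geometric matching. Think of the desired bound as a linear functional $L(a_\bullet) \ge 0$ on the cone of admissible $a$-sequences; every Kemperman--Scherk inequality $a_i + a_j - a_{i+j} \le 1$, together with the palindromy, cuts out a half-space, and non-negative combinations of these are again valid. The four families of defining inequalities of $Q(r,r')$ are designed so that conditions (\ref{cat1}) and (\ref{cat2}) correspond to one-step bounds $a_{r'+4+i} \ge $ (something involving $a_1$), while (\ref{cat3}) and (\ref{cat4}) correspond to iterated or symmetrized bounds of the form $a_i + a_{2r-i} \ge \cdots$ obtained by repeatedly applying Kemperman--Scherk and using palindromy. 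A vertex $(\lambda_0,\dots,\lambda_{r+r'})$ of $Q(r,r')$ is the extremal non-negative combination of the constraints that is tight on enough facets to imply the stated inequality; I would verify this by writing out, vertex by vertex (using the explicit descriptions in Examples~\ref{swim} and \ref{swum}), the corresponding LP certificate expressing
\[
a_{r'+3} + \sum_j \lambda_j a_{r'+4+j} - \lambda a_1 - \sum_j a_{j+2}
\]
as a non-negative combination of Kemperman--Scherk inequalities and palindromic identities.

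The main obstacle is the bookkeeping in step three: one must show that \emph{every} vertex of $Q(r,r')$ actually arises from a feasible combination of Kemperman--Scherk bounds and palindromic duality, rather than from some spurious extremal ray of the abstract polyhedron. The vertex descriptions in Examples~\ref{swim} and \ref{swum} suggest that each vertex corresponds to a specific pairing of indices $i \leftrightarrow k_i$ with $i \le k_i \le 2r - i$ (or $\le r' - i$), and that this pairing records exactly which sumset inequality is used at each index --- so the strategy should reduce to checking one explicit identity per pairing. A secondary difficulty is ensuring that the Kemperman--Scherk hypothesis $A_i \cap (-A_j) = \{0\}$ really does hold in the lattice realization of the $a_i$; this seems to require a careful choice of interior lattice point in $lP$ around which to symmetrize the construction of the $A_i$.
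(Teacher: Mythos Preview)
There is a genuine gap in the first two steps. Your plan hinges on realizing each $a_i$ as the cardinality of a set $A_i$ in an abelian group with $A_i + A_j \subseteq A_{i+j}$ and $A_i \cap (-A_j) = \{0\}$, so that Kemperman--Scherk yields $a_{i+j} \ge a_i + a_j - 1$. But this inequality is simply false for $h^*$-polynomials: the reflexive $6$-polytope $P(2,2,2,2,2,1,1)$ in the paper has $a(t) = h^*(t) = 1 + 2t + 2t^2 + 2t^3 + 2t^4 + 2t^5 + t^6$, and here $a_1 + a_2 - 1 = 3 > 2 = a_3$. So no such realization can exist, and the ``basic sumset inequalities'' you intend to combine in step three are not available. (There is also a smaller problem: with your grading, $A_i \cap (-A_j)$ is empty rather than $\{0\}$ for $i,j > 0$, so Kemperman--Scherk does not literally apply to the pair $(A_i,A_j)$.)

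The paper's mechanism is quite different. The coefficient $a_i$ is not a single cardinality: by Lemma~\ref{nonlemma} one has $a(t) = \sum_{F \in \mathcal{S}} B_F(t)\,h_F(t)$ for a triangulation $\mathcal{S}$ of $\partial P$, so each box point $w \in \BOX(F)$ contributes an entire shifted copy of the symmetric unimodal polynomial $h_F(t)$. Kemperman--Scherk is applied not to the $a_i$ but inside the finite abelian group $N(G)$ attached to each maximal simplex $G$, to sets built from the refined invariants $N(G,k,l)$ (Lemma~\ref{keyD}); summing over all $G$ gives inequalities among the quantities $a(k,l)$ (Lemma~\ref{sum}), which record both $u(w)$ and $u(-w)$ and hence the degree of the contributing $h_F$. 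The passage from these to the desired inequality on the $a_i$ (Lemma~\ref{full4}) is a separate argument that crucially uses the unimodality of each $h_F(t)$ via Lemma~\ref{coke}; the defining inequalities of $Q(r,r')$ are exactly the hypotheses needed to make Lemma~\ref{coke} fire for every relevant $(k,l)$, not certificates for combining sumset bounds on the $a_i$. So the polytope $Q(r,r')$ is doing different work than your proposal assumes.
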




The following corollary, although a little less optimal, may be useful in practice. 

\begin{corollary}\label{corA}
If $0 \le r \le r'$,  
$d \ge 2r' + m + 7$ and $m = \max( 2r, \lfloor  \frac{r  + r'}{2} \rfloor)$,  then
\begin{equation*}
(m - r + 1)a_1 + a_2 + \cdots + a_{r + 2} \le a_{r' + 3} + a_{r' + 4} +  \cdots + a_{r' + 4 + m}.
\end{equation*}
Equivalently,
\[
\sum_{j = 0}^r (m - r + 1 + j) h^*_{d - 1 - j} + (m + 2) \sum_{j = 0}^{r' - r} h^*_{d -r - 2 - j} + 
\sum_{j = 0}^m (m + 1 - j) h^*_{d - r' - 3 - j} 
\]
\[
\le \sum_{j = 0}^r (m - r + 1 + j) h^*_{j + 2} + (m + 2) \sum_{j = 0}^{r' - r} h^*_{r + 3 + j} + 
\sum_{j = 0}^m (m + 1 - j) h^*_{r' + 4 + j}.  
\]
\end{corollary}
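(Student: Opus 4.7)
My strategy is to apply Theorem~\ref{superA} to a single carefully chosen point of $Q(r,r')$. Let $v = (v_0, \ldots, v_{r+r'})$ with $v_j = 1$ for $0 \le j \le m$ and $v_j = 0$ otherwise. If Theorem~\ref{superA} applies at $\lambda = v$, then $\sum_j v_j - r = m + 1 - r$ and its conclusion becomes exactly
\[
(m-r+1) a_1 + a_2 + \cdots + a_{r+2} \;\le\; a_{r'+3} + a_{r'+4} + \cdots + a_{r'+4+m},
\]
which is the first form of the corollary. Since $v$ is typically not a vertex of $Q(r,r')$, I first extend Theorem~\ref{superA} from its vertices to all of $Q(r,r')$. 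Rearranging the conclusion as
\[
F(\lambda) := a_{r'+3} + r a_1 - \sum_{j=0}^{r} a_{j+2} + \sum_{j=0}^{r+r'} \lambda_j\bigl(a_{r'+4+j} - a_1\bigr) \;\ge\; 0,
\]
the hypothesis $d \ge 2r' + r + 7$ ensures $r' + 4 + j \le d - 1$ for every $j \le r+r'$, so by Theorem~\ref{previous} each coefficient $a_{r'+4+j} - a_1$ is non-negative. Because the recession cone of $Q(r,r')$ is the non-negative orthant, non-negativity of $F$ on the vertices propagates to all of $Q(r,r')$ by linearity and monotonicity.

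The substantial step is verifying $v \in Q(r,r')$. Conditions~(\ref{cat1}) and~(\ref{cat2}) are immediate because $m \ge 2r$ and $m \ge \lfloor(r+r')/2\rfloor$ force $v_i = 1$ throughout their respective ranges. Condition~(\ref{cat3}) is attained with equality, since every index in $\{i, \ldots, 2r-i\}$ is at most $2r \le m$. The delicate constraint is~(\ref{cat4}): there $\sum_{j=i}^{r+r'-i} v_j = \min(r+r'-i, m) - i + 1$. When $r+r'-i \le m$ the required inequality collapses to $r+r'+1 \ge 2i$, which is clear from $i < (r+r')/2$. When $r+r'-i > m$ it reduces, after a short manipulation, to $(m-r)(r+r'+1) \ge i(r'-r-1)$, and this algebraic inequality is where I expect the main technical effort to lie. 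I would handle it by splitting into the cases $m = 2r$ (equivalently $r' \le 3r+1$) and $m = \lfloor(r+r')/2\rfloor$, using $i \le \lfloor(r+r'-1)/2\rfloor$ in each, and checking by elementary manipulation.

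To derive the equivalent $h^*$-form, I use $a_{k+1} - a_k = h^*_{k+1} - h^*_{d-k}$, which follows at once from~\eqref{a}. Writing the difference of the two sides of the first inequality as $S = \sum_j A_j a_j$ with $A_1 = -(m-r+1)$, $A_j = -1$ for $2 \le j \le r+2$, $A_j = +1$ for $r'+3 \le j \le r'+4+m$, and $A_j = 0$ otherwise, summation by parts yields $S = \sum_{k \ge 1} T_k (h^*_k - h^*_{d-k+1})$, where $T_k = \sum_{j \ge k} A_j$. A direct computation gives $T_k = m-r-1+k$ on $[2, r+2]$, $T_k = m+2$ on $[r+3, r'+3]$, $T_k = m+r'+5-k$ on $[r'+4, r'+4+m]$, and $T_k = 0$ elsewhere; these are exactly the coefficients printed in the second form, establishing the equivalence.
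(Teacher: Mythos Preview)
Your approach is essentially the one in the paper. The paper derives the corollary from Theorem~\ref{superA} together with Lemma~\ref{plan}, which is exactly your membership check: it shows that the vector $v$ with $v_j=1$ for $0\le j\le m$ and $v_j=0$ otherwise lies in $Q(r,r')$, verifying condition~\eqref{cat4} via the single-line computation $(r'-r-1)i \le (m-r)(r+r'+1)$, which follows from $2i\le r+r'+1$ and $r'-r-1\le 2(m-r)$ (no case split on the value of $m$ is needed).

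One point where you are more careful than the paper: Theorem~\ref{superA} is stated only for \emph{vertices} of $Q(r,r')$, while $v$ need not be one. The paper's ``follows from Theorem~\ref{superA}'' really appeals to the underlying Lemma~\ref{full4} (together with Lemma~\ref{hayden}), which proves the inequality for every point of $Q(r,r')$. Your explicit extension argument---writing the inequality as an affine functional $F(\lambda)$ with nonnegative coefficients $a_{r'+4+j}-a_1$ on the recession directions, and invoking Theorem~\ref{previous}---is a clean way to pass from vertices to the whole polyhedron using only the stated results. Your summation-by-parts derivation of the $h^*$-form is likewise correct (the paper simply asserts the equivalence).
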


\begin{remark}
As Corollary~\ref{corA} demonstrates, it will be more natural to state results in terms of the polynomials $a(t)$ and $b(t)$, rather than $h^*(t)$. On the other hand, one easily verifies that 
$(1 + t + \cdots + t^{l - 1})h^*(t) = a(t) + t^{l}b(t)$ is the unique decomposition of $(1 + t + \cdots + t^{l - 1})h^*(t)$ as a sum of polynomials satisfying 
$a(t) = t^d a(t^{-1})$ and  $b(t) = t^{s - 1}b(t^{-1})$. Hence, in practice, it is very easy to compute $a(t)$ and $b(t)$ from $h^*(t)$. 
\end{remark}

\begin{example}\label{example1}
Setting $r = r' = 0$ in the above corollary implies that for $d \ge 7$, 
\[
a_1 + a_2 \le a_3 + a_4,
\]
or, equivalently, 
\[
h^*_{d - 1} + 2 h^*_{d - 2} + h^*_{d - 3}  \le h^*_{2} + 2 h^*_{3} + h^*_{4}.
\]
We claim that the vector $v  = (1,2,2,1,2,2,1,0)$ can not be realized as the coefficients of the  $h^*$-polynomial of a lattice polytope. In this case, $a(t) = (1 + t)h^*(t) = 1 + 3t + 4t^2 + 3t^3 + 3t^4 + 4t^5 + 3t^6 + t^7$ and $b(t) = 0$. On the one hand, the coefficients of $b(t)$ are non-negative and $1 = a_{0} \leq a_{1} \leq a_{i}$
for $2 \le i \le 6$, so the vector $v$ satisfies all previously known inequalities (see Theorem~\ref{previous}). On the other hand, 
$a_1 + a_2 = 7 > 6 = a_3 + a_4$, so $v$ violates the above inequality with $d = 7$. 
\end{example}

\begin{example}\label{equal}
If we set $r = r'$ in Theorem \ref{superA}, then, using Example~\ref{swim}, we get an explicit description of the inequalities in the theorem: 
\[
(r + 1)a_1 + a_2 + \cdots + a_{r + 2} \le a_{r + 3} + a_{r + 4} + \cdots + a_{2r + 4} + \sum_{i = 0}^{r - 1} a_{r + 4 + k_i}, 
\]
for some $i \le k_i \le 2r - i$, 
and for $d \ge 3r + 7$. 
\end{example}

\begin{example}\label{zero}
If we set $r = 0$ in Theorem \ref{superA}, then, using Example~\ref{swum}, we also get an explicit description of the inequalities in the theorem: 
\[
\lambda a_1 + a_2 \le a_{r' + 3} + \sum_{i = 0}^{\lfloor \frac{r'}{2} \rfloor} \frac{1}{2i + 1} a_{r' + 4 + i} + 
\sum_{i = \lceil \frac{r'}{4}  \rceil}^{\lfloor \frac{r' - 1}{2} \rfloor}
( \frac{2}{r' + 1} - \frac{1}{2i + 1})a_{r' + 4 + k_i}, 
\]
for some $i \le k_i \le r' - i$, 
and for $d \ge 2r' + 7$, where $\lambda =  
\sum_{i = 0}^{\lfloor \frac{r'}{2} \rfloor} \frac{1}{2i + 1} + 
\sum_{i = \lceil \frac{r'}{4}  \rceil}^{\lfloor \frac{r' - 1}{2} \rfloor} ( \frac{2}{r' + 1} - \frac{1}{2i + 1})$.
\end{example}

Using the previous two examples and some additional computations,  in Figure~\ref{noint} we compute all the inequalities from Theorem~\ref{superA} in dimension less than or equal to $12$. 

\begin{remark}
The symmetry of $a(t)$ implies that the lower bound $d \ge 2r' + r + 7$ can not be lowered, and, as demonstrated in Figure~\ref{noint}, that it needs to be raised for particular inequalities in order to produce new examples. 
\end{remark}

\begin{figure}[htb]

\begin{tabular}{ | l |  l | l |}
\hline
Inequality & $(r,r')$ & 
Dimension 
\\
\hline
$a_1 + a_2 \le a_3 + a_4$  & $(0,0)$ &  $d \ge 7$ \\
$a_1 + a_2 \le a_4 + a_5$  & $(0,1)$ & $d \ge  9$\\
    $2a_1 + a_2 + a_3 \le a_4 + a_5 + 2a_6$ & $(1,1)$ & $d \ge  10$ \\ 
   $2a_1 + a_2 + a_3 \le a_4 + 2a_5 + a_6$    & $(1,1)$ & $d \ge  10$ \\ 
     $2a_1 + a_2 + a_3 \le a_4 + a_5 + a_6 + a_7$  & $(1,1)$ & $d \ge  11$ \\ 
    $\frac{4}{3}a_1 + a_2 \le a_5 + a_6 + \frac{1}{3} a_7$ & $(0,2)$ & $d \ge  11$ \\ 
$2a_1 + a_2 + a_3 \le a_5 + a_6 + 2a_7$ & $(1,2)$ & $d \ge 12$ \\
$2a_1 + a_2 + a_3 \le a_5 + 2a_6 + a_7$  & $(1,2)$ & $d \ge 12$ \\
\hline 
\end{tabular}

 \caption{Inequalities from Theorem~\ref{superA} in 
 dimension at most $12$.}
        \label{noint}
        \end{figure}

For the remainder of the introduction, we will specialize and only consider the case when $h^*_d \ne 0$, or, equivalently, when $s = d$ and $l  = 1$. It follows from Ehrhart's original results that $h^*_d$ can be interpreted as the number of interior lattice points in $P$ (see, for example, \cite{BRComputing}), and hence our  assumption is that $P$ contains an interior lattice point. In this case, the decomposition 
\[
h^*(t) = a(t) + tb(t), 
\]
was first considered by Betke and McMullen in \cite{BMLattice}. Using techniques
 of Hibi \cite{HibLower}, Betke and McMullen \cite{BMLattice}, and the author \cite{YoInequalities},  we give an explicit description of this decomposition in Theorem~\ref{main}, 
 and deduce the following theorem as a corollary, which, on the one hand, refines \eqref{ineq1} 
 and \eqref{ineq2} when $s = d$, and, on the other hand, includes \eqref{ineq3} as a consequence. 

\begin{theorem}\label{refinement}
If $P$ contains an interior lattice point and $h^*(t) = a(t) + tb(t)$ is the decomposition above, 
then the coefficients of $a(t)$ and $b(t)$ satisfy:  
\[1 = a_{0} \leq a_{1} \leq a_{i} \;  \textrm{ for } 2 \leq i \leq d - 1,\]
\[0 \leq b_{0} \leq b_{i}  \; \textrm{ for } 1 \leq i \leq d - 2.\] 
Equivalently, the coefficients of the $h^*$-polynomial of $P$ satisfy:
\begin{equation*}
1  = h^*_{0} \leq h^*_{d} \leq h^*_{1}, 
\end{equation*}
\begin{equation*}
h^*_{1} + \cdots + h^*_{i} \le
 h^*_{d - 1} + \cdots + h^*_{d - i} \le h^*_{2} + \cdots
+ h^*_{i + 1},
\end{equation*}
for $i = 1, \ldots, \lfloor \frac{d - 1}{2} \rfloor$.
\end{theorem}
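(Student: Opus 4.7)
The plan is to deduce Theorem~\ref{refinement} as a corollary of Theorem~\ref{main}, which provides an explicit combinatorial description of the decomposition $h^*(t) = a(t) + tb(t)$ under the hypothesis $s = d$, $l = 1$. Once Theorem~\ref{main} is in hand, the $a$-half of Theorem~\ref{refinement} is already contained in Theorem~\ref{previous}, so the only genuinely new content is the chain $b_0 \le b_i$ for $1 \le i \le d-2$; by the palindromy $b_i = b_{d-1-i}$ this reduces further to $1 \le i \le \lfloor (d-1)/2 \rfloor$.

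To set up Theorem~\ref{main}, I would fix an interior lattice point $v$ of $P$, take a lattice triangulation $T$ of $\partial P$, and cone over $v$ to obtain a triangulation $T'$ of $P$. The simplices of $T'$ split naturally into those containing $v$ as a vertex and those lying entirely in $\partial P$. Using the half-open cone decomposition of Betke--McMullen, each simplex $\sigma \in T'$ contributes a generating function $\mathrm{Box}_\sigma(t)$ enumerating lattice points of the half-open fundamental parallelepiped of the cone over $\sigma$ by their last coordinate, and summing over all $\sigma$ recovers $h^*(t)$. I would then prove that the sum of $\mathrm{Box}_\sigma(t)$ over $v$-containing simplices is palindromic of degree $d$, and the sum over boundary simplices equals $t$ times a palindromic polynomial of degree $d-1$; by uniqueness of the $(a,b)$-decomposition these must equal $a(t)$ and $tb(t)$ respectively. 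The palindromy follows from standard Ehrhart reciprocity applied to the open star of $v$ and to the boundary complex.

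With this interpretation, $b_0$ counts a specific subset of degree-$1$ lattice points in the boundary boxes, and $b_i$ counts degree-$(i+1)$ contributions. I would establish $b_0 \le b_i$ by constructing a direct injection: each degree-$1$ boundary lattice point is translated by an appropriate lattice vector built from $v$, engineered so that the image is a degree-$(i+1)$ point still sitting inside a half-open box attached to a boundary simplex, and so that distinct preimages produce distinct images. This extends the box-injection technique of \cite{YoInequalities} and Hibi's shellability argument from \cite{HibLower}, and the crucial use of $v$ being an \emph{interior} lattice point is that it permits the translation to stay within the correct boxes.

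The $h^*$-reformulation is then immediate from the definitions \eqref{a} and \eqref{b} with $s = d$, $l = 1$: the relation $a_1 \le a_{i+1}$ rewrites to $h^*_{d-1} + \cdots + h^*_{d-i} \le h^*_2 + \cdots + h^*_{i+1}$, the relation $b_0 \le b_i$ rewrites to $h^*_1 + \cdots + h^*_i \le h^*_{d-1} + \cdots + h^*_{d-i}$, and the bounds $1 \le h^*_d \le h^*_1$ package $b_0 \ge 0$ with $a_1 \ge 1$. The main obstacle is the injection step of the third paragraph: one must set it up so that the image really lands in the correct half-open parallelepiped, and in particular so that the map does not leak from the boundary contribution into the $v$-containing (interior) contribution, which is a delicate combinatorial check reliant on the careful choice of triangulation $T$.
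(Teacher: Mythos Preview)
Your identification of which simplices feed into $a(t)$ versus $b(t)$ is reversed. In the Betke--McMullen expansion $h^*(t)=\sum_F B_F(t)h_F(t)$, a face $F\subseteq\partial P$ has $h_F(t)$ palindromic of degree $d-1-\dim F$, so $B_F(t)h_F(t)$ is palindromic of degree $d$ and contributes to $a(t)$; a face $F\nsubseteq\partial P$ has $h_F(t)$ of degree $d-\dim F$, so $t^{-1}B_F(t)h_F(t)$ is palindromic of degree $d-1$ and contributes to $b(t)$. Thus boundary faces go to $a$, interior faces to $b$, the opposite of what you wrote. With the correct assignment your ``open star of $v$'' reciprocity heuristic no longer points the way you want, and the injection you sketch (translating degree-$1$ boundary points by a vector built from $v$) is aimed at the wrong target.

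More importantly, the paper avoids any injection argument. The mechanism is different: one uses Hibi's triangulation $\mathcal{T}'$ obtained by successively starring at \emph{every} interior lattice point, not just one. This forces every lattice point of $P$ to be a vertex of $\mathcal{T}'$, so every box element $w$ has $u(w)\ge 2$; consequently $a'(t)$ and $tb'(t)$ are divisible by $t^2$, i.e.\ $a'_0=a'_1=b'_0=0$. Meanwhile $h_{\mathcal{T}'}(t)=\tilde a(t)+t\,\tilde b(t)$ splits into two symmetric \emph{unimodal} pieces (this is Hibi's computation, ultimately resting on the hard Lefschetz theorem for the $h$-polynomials $h_F$). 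Then $b_0=\tilde b_0$ and, for $1\le i\le d-2$, $b_i=\tilde b_i+b'_i\ge\tilde b_i\ge\tilde b_0=b_0$ by unimodality; the $a$-inequalities follow identically. Your simple cone-over-one-point triangulation does not give $u(w)\ge 2$ when $P$ has further interior lattice points, so the low-degree vanishing fails there, and the promised injection (left entirely unspecified) would have to absorb that slack---which is exactly the hard part.
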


\begin{example}
We claim that the vector $v  = (1,2,3,2,2,2)$ can not be realized as the coefficients of the $h^*$-polynomial of a lattice polytope. In this case, $a(t) = 1 + t + 2t^2 + 2t^3 + t^4 + t^5$ and $b(t) = 1 + t + t^3 + t^4$. On the one hand, the coefficients of $b(t)$ are non-negative and $1 = a_{0} \leq a_{1} \leq a_{i}$
for $2 \le i \le 4$, so the vector $v$ satisfies the inequalities in Theorem~\ref{previous}. Moreover,
$h^*_1 = 2 \le h^*_i$ for $2 \le i \le 4$, so $v$ satisfies Hibi's inequality \eqref{ineq3}, and hence
all previously known inequalities. On the other hand, $b_0 = 1 > b_2 = 0$, so $v$ does not satisfy the inequalities in Theorem~\ref{refinement} with $d = 5$. 

\end{example}

\begin{remark}
The corresponding theorem is false if $P$ does not contain an interior lattice point (Example 2.4 in \cite{YoInequalities}, essentially due to Henk and Tagami). 
\end{remark}

\begin{remark}
For $d \le 4$, one verifies that Theorem~\ref{refinement} is equivalent to Theorem~\ref{previous} and Hibi's inequality \eqref{ineq3}. 
\end{remark}

In fact, using examples of Payne \cite{PayEhrhart}, we prove the following theorem, which says that the inequalities in Theorem~\ref{refinement} give all the inequalities  of a certain type in dimension at most $5$. We consider the cases of dimensions $6$ and $7$ later in the introduction. 
More precisely, we say that a linear inequality $\sum_{i = 0}^{d} \alpha_i h^*_i \ge 0$ is \emph{balanced} if 
$\sum_{i = 0}^d \alpha_i = 0$. Note that all known (minimal) inequalities are balanced, and, in fact, all inequalities produced using the techniques of this paper will be balanced. 

\begin{theorem}\label{all5}
Every balanced inequality  for $h^*$-polynomials of polytopes containing an interior lattice point 
follows from the inequalities in Theorem~\ref{refinement} if and only if $d \le 5$.

\end{theorem}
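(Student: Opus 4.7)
The plan is to reformulate the statement dually. Let $\mathcal{C}_d \subseteq \R^{d+1}$ denote the closed convex cone generated by the $h^*$-vectors of $d$-dimensional lattice polytopes containing an interior lattice point, and let $\mathcal{C}'_d$ denote the polyhedral cone cut out by the inequalities of Theorem~\ref{refinement} together with $h^*_0 \geq 0$ and $h^*_d \geq 0$. Since every inequality of Theorem~\ref{refinement} is balanced, a balanced inequality $\sum_i \alpha_i h^*_i \ge 0$ follows from them precisely when it holds on all of $\mathcal{C}'_d$. Hence the statement reduces to showing that the closed conical hull of $\mathcal{C}_d$ equals $\mathcal{C}'_d$ if and only if $d \le 5$.

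For the forward direction ($d \le 5$), I would proceed by explicit case analysis. For each $d \in \{2,3,4,5\}$, I would list the facet-defining inequalities of $\mathcal{C}'_d$ coming from Theorem~\ref{refinement} (along with the normalizations $h^*_0 = 1$ and $h^*_d \ge 1$) and enumerate the finitely many extreme rays of the resulting polyhedron. For each extreme ray, I would either exhibit a single lattice polytope whose $h^*$-vector spans that ray, or express such a vector as a non-negative rational combination of realized ones. The main supply of examples comes from Payne's family in \cite{PayEhrhart}, which is designed to saturate the Hibi and Stanley inequalities at boundary cases, supplemented by standard simplices, their dilates, and low-dimensional reflexive polytopes. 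In dimensions $d = 2, 3$ the statement is essentially classical, so the bulk of the work lies in $d = 4$ and $d = 5$.

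For the reverse direction ($d \ge 6$), I would exhibit a single balanced inequality valid on $\mathcal{C}_d$ but failing somewhere on $\mathcal{C}'_d$. For $d \ge 7$, a modification of Example~\ref{example1} (perturbed so as to satisfy $h^*_d \ge 1$ while still lying in $\mathcal{C}'_d$) violates the balanced inequality $a_1 + a_2 \le a_3 + a_4$ of Corollary~\ref{corA}, and so provides the required separation. For $d = 6$, which is the borderline case, one must produce an extra balanced inequality by running the Kemperman--Scherk-based method behind Theorem~\ref{superA} with parameters specifically tuned to dimension six and to the interior-point setting $s = d$, together with an explicit vector in $\mathcal{C}'_6 \setminus \mathcal{C}_6$ that witnesses that the new inequality is not implied by Theorem~\ref{refinement}.

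The main obstacle is the forward direction. Realizability of $h^*$-vectors is the central open problem of the field (Question~\ref{biggie}), so there is no general recipe for matching an extreme ray of $\mathcal{C}'_d$ with the $h^*$-vector of a genuine lattice polytope. The proof must rely on the explicit constructions of Payne and on a careful case analysis of the vertices of the polyhedron $\{h^*_0 = 1\} \cap \mathcal{C}'_d$ in dimensions $4$ and $5$, where the number of facets and vertices grows quickly and each vertex has to be realized by hand.
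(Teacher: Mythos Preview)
Your overall strategy---dualize to cones, realize the extreme rays of $C'(d)$ via Payne's simplices $P(\alpha_0,\ldots,\alpha_d)$ for $d\le 5$, and exhibit a separating balanced inequality for $d\ge 6$---is exactly the paper's approach. The forward direction matches the paper essentially verbatim: one lists the rays of the simplicial cone $C'(d)$ and checks each is hit by some $P(\alpha_0,\ldots,\alpha_d)$ using the explicit formula for $h^*(\alpha_0,\ldots,\alpha_d)(t)$.

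The reverse direction is where you diverge, and where your proposal is weaker than it needs to be. You split into $d\ge 7$ (using the inequality $a_1+a_2\le a_3+a_4$ from Theorem~\ref{superA}) and $d=6$ (where you vaguely propose to ``run the Kemperman--Scherk-based method\ldots tuned to dimension six''). The paper instead uses a \emph{single} inequality valid for all $d\ge 6$: the case $\alpha=r=r'=0$ of Theorem~\ref{variant}\,\eqref{try3} (that is, Example~\ref{example2}), namely
\[
a_1+b_0+b_1\le a_3+b_2+b_3, \qquad\text{equivalently}\qquad h^*_1+h^*_2\le h^*_{d-3}+h^*_{d-2}.
\]
This is an inequality for polytopes with an interior lattice point, which is the setting of the theorem, and its dimension bound is precisely $d\ge 6$. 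The paper then exhibits the single witness vector $e_2+e_{d-1}$ in $C'(d)$, corresponding to $h^*(t)=1+t+2t^2+t^3+\cdots+t^{d-2}+2t^{d-1}+t^d$, for which $a_1+b_0+b_1=2>1=a_3+b_2+b_3$. So there is no need for a case split or for any ad hoc tuning at $d=6$: the interior-point version of the additive-number-theory argument (Theorem~\ref{variant}) already delivers the separating inequality at the borderline dimension.
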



We now return to our connection with additive number theory. In the case when $P$ contains an interior lattice point, Kemperman and Scherk's theorem provides a powerful tool for deducing inequalities. 
As before, the statement of the theorem is a little technical, but we will provide corollaries and examples below. 
Recall that 
for $0 \le r \le r'$,  
$Q(r,r') \subseteq \R^{r + r' + 1}$ is the rational polyhedron defined by all $(r + r' + 1)$-tuples
$(x_0, x_1, \ldots, x_{r + r'})$ of non-negative real numbers satisfying conditions \eqref{cat1}, \eqref{cat2},
\eqref{cat3} and \eqref{cat4}, listed previously.  If $r < 0$ and $r + r' + 1 \ge 0$, then we consider $Q(r,r')$ to be the origin in $\R^{r + r' + 1}$.

\begin{theorem}\label{variant}
Let $P$ be a $d$-dimensional lattice polytope containing an interior lattice point, and let $0 \le r \le r'$ and $0 \le \alpha \le r + 1$. 
With the notation above, if $(\lambda_0, 
\ldots, \lambda_{r + r'})$ and $(\mu_0, 
\ldots, \mu_{r + r'})$ are  vertices of $Q(r,r')$, and $(\lambda_0', \ldots, \lambda_{r + r' - 2 \alpha}')$ is a vertex of $Q(r - \alpha, r' - \alpha)$, then
 

\begin{enumerate}

\item\label{try1} for $d \ge 2r' + r + 7$ and   $0 \le \alpha \le r$,
\[
\lambda  a_1 + \mu b_0 +  \sum_{j = 0}^{r} a_{j + 2}  + \sum_{j = 0}^{r - \alpha} b_{j + 1}  \le  a_{r' + 3} + 
\sum_{j = 0}^{r + r'} \lambda_j a_{r' + 4 + j} +  b_{r' + 2 - \alpha} + \sum_{j = 0}^{r + r'} \mu_j b_{r' + 3 - \alpha + j}, 
\]
where $\lambda = \sum_{j = 0}^{r + r'} \lambda_j - r$ and $\mu = \sum_{j = 0}^{r + r' } \mu_j - r + \alpha$,

\item\label{try2} for $d \ge 2r' + r + 6$ and  $r > 0$,
\begin{equation*}
\lambda  a_1 + \mu b_0 +  \sum_{j = 0}^{r} a_{j + 1}  + \sum_{j = 0}^{r} b_{j + 1}  \le  a_{r' + 2} + b_{r' + 2} + 
\sum_{j = 0}^{r + r'} \lambda_j a_{r' + 3 + j} + \sum_{j = 0}^{r + r'} \mu_j b_{r' + 3 + j}, 
\end{equation*}
where $\lambda = \sum_{j = 0}^{r + r' } \lambda_j - r$ and $\mu = \sum_{j = 0}^{r + r' } \mu_j - r$,

\item\label{try3} for $d \ge 2r' + r  + \alpha + 6$, 
\begin{equation*}
\lambda'  a_1 + \mu b_0  + \sum_{j = 0}^{r} b_{j + 1}  \le  \sum_{j = 0}^{\alpha} b_{r' + 2 + j} + 
\sum_{j = 0}^{r + r' - 2 \alpha} \lambda_j' a_{r' + \alpha +  3 + j} + \sum_{j = 0}^{r + r'} \mu_j b_{r' + \alpha + 3 + j}, 
\end{equation*}
where $\lambda' = \sum_{j = 0}^{r + r' - 2\alpha} \lambda_j'$ and $\mu = \sum_{j = 0}^{r + r' } \mu_j - r + \alpha$.

\end{enumerate}







\end{theorem}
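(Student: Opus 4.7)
The plan is to extend the additive-number-theoretic strategy underlying Theorem~\ref{superA} by mixing the coefficients of $a(t)$ and $b(t)$, using the interior lattice point to provide a distinguished nonzero element that lets the $b$-side enter on equal footing with the $a$-side. Following the author's earlier work \cite{YoInequalities}, one should first realize $a_i$ and $b_i$ as the cardinalities of graded pieces (indexed by a ``height'' function) of two finite sets $S_a$ and $S_b$ inside a common abelian group $G$; the interior lattice point contributes the class corresponding to $b_0 = h^*_d \ge 1$, which guarantees $S_b$ is nonempty. The goal is then to translate each asserted inequality into a sumset inequality that can be proved by Kemperman-Scherk.

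For part (1), given vertices $(\lambda_j)$ and $(\mu_j)$ of $Q(r,r')$, I would construct graded subsets $A, B \subseteq G$ whose graded sizes are dictated by the left-hand side: the low-height pieces of $A$ combine $a_{j+2}$-many elements of $S_a$ for $0 \le j \le r$ with the $a_1$-piece weighted by $\lambda$, and similarly for $B$ using $(\mu_j)$, $(b_{j+1})_{0 \le j \le r - \alpha}$, and a $\mu b_0$ term. One then verifies $A \cap (-B) = \{0\}$ using the symmetries $a(t) = t^d a(t^{-1})$ and $b(t) = t^{d-1} b(t^{-1})$ combined with the dimension bound $d \ge 2r' + r + 7$, which is exactly what is needed for the relevant heights of $A$ and $-B$ to lie in disjoint strips apart from the origin. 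The inequality then follows by bounding $|A+B|$ from above by the heights occupied on the right, with the linear conditions \eqref{cat1}--\eqref{cat4} defining $Q(r,r')$ tuned precisely so that this bound sums to the prescribed combination of $a_{r'+4+j}$'s and $b_{r'+3-\alpha+j}$'s.

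Parts (2) and (3) are variants of the same construction. Part (2) uses $r > 0$ to pair $a_{j+1}$ with $b_{j+1}$ for $0 \le j \le r$, which allows the grading to be shifted by one and reduces the dimension bound to $d \ge 2r' + r + 6$. Part (3), by contrast, drops all $a$-coefficients from the left except $a_1$ (weighted by $\lambda'$ coming from the smaller polyhedron $Q(r - \alpha, r' - \alpha)$) and correspondingly introduces a block of $\alpha + 1$ consecutive $b$-coefficients on the right; geometrically, the $a$-side has been shifted by $\alpha$ steps, so the smaller polyhedron $Q(r - \alpha, r' - \alpha)$ records the reduced amount of data needed to control it, and the bound $d \ge 2r' + r + \alpha + 6$ is the room required to perform this shift without the height strips colliding.

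The main obstacle will be verifying the intersection condition $A \cap (-B) = \{0\}$ in the mixed setting, because $a(t)$ and $b(t)$ are symmetric about different centers ($d/2$ and $(d-1)/2$ respectively), so one must track the two reflections simultaneously and use the interior lattice point to align them. Getting the three different dimension bounds right --- and thereby justifying both the appearance of the shift parameter $\alpha$ and the subtle difference between parts (1) and (2) --- requires careful height bookkeeping matched exactly to the four families of inequalities defining $Q(r,r')$; this bookkeeping, rather than any single clever idea, will be the technical heart of the argument.
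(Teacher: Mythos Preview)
Your plan has a structural gap: you propose to realize $a_i$ and $b_i$ as cardinalities of graded pieces of sets $S_a, S_b$ inside a \emph{single} abelian group, and then apply Kemperman--Scherk directly. No such group exists in this setup. In the paper the additive number theory is applied \emph{locally}, to each finite group $N(G)$ attached to a maximal simplex $G$ of a lattice-free triangulation $\mathcal{T}'$ (Lemmas~\ref{keyD4a}--\ref{keyD4c}); these local inequalities are then summed over all maximal $G$ to yield inequalities among the auxiliary quantities $a(k,l)$ and $b(k,l)$ of Lemma~\ref{christmas}. The coefficients $a_i$ and $b_i$ themselves are not graded counts in any group: by Theorem~\ref{main} they decompose as $a_i = \tilde a_i + a'_i$ and $b_i = \tilde b_i + b'_i$, where the tilde parts are the unimodal contributions of $h_{\mathcal{T}'}(t)$ and the primed parts are sums of \emph{shifted $h$-polynomials} $t^{u(w)}h_F(t)$ over Box elements $w$. (Incidentally, $b_0 = h^*_d - 1$, not $h^*_d$.)

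Because of this two-layer structure, your proposed verification that ``the linear conditions defining $Q(r,r')$ are tuned so that the upper bound sums to the right-hand side'' cannot be carried out at the level of a single sumset inequality. What actually happens is a second analytic step (modeled on Lemma~\ref{full4}): for each face $F$ and each $w \in \BOX(F)$ with $(u(w),u(-w))$ in a prescribed range, one compares the contribution of $t^{u(w)}h_F(t)$ to the two sides using the symmetry and unimodality of $h_F(t)$ together with Lemma~\ref{coke}. The defining inequalities \eqref{cat1}--\eqref{cat4} of $Q(r,r')$ are exactly the hypotheses needed for Lemma~\ref{coke} in these comparisons, and the change-of-basis trick from the proof of Lemma~\ref{full4} is what converts the double-indexed bounds on $a(k,l), b(k,l)$ into the single-indexed bounds on $a_i, b_i$. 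Your sketch omits this entire layer, and without it the argument does not close.
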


As before, the following corollary may be useful in practice. 

\begin{corollary}\label{dos}
If $P$ is a $d$-dimensional lattice polytope containing an interior lattice point, and $0 \le r \le r'$ 
 and $m =  \max( 2r, \lfloor  \frac{r  + r'}{2} \rfloor)$, then

 

\begin{enumerate}

\item for $d \ge 2r' + m + 7$ and   $0 \le \alpha \le r$,
\[
(m - r + 1) a_1 + (m - r + \alpha + 1) b_0 +  \sum_{j = 0}^{r} a_{j + 2}  + \sum_{j = 0}^{r - \alpha} b_{j + 1}  \le 
\sum_{j = 0}^{m + 1} a_{r' + 3 + j} 
+ \sum_{j = 0}^{m + 1} b_{r' + 2 - \alpha + j},
\]

\item for $d \ge 2r' + m + 6$ and  $r > 0$,
\begin{equation*}
(m - r + 1) (a_1 +  b_0) +  \sum_{j = 0}^{r} a_{j + 1}  + \sum_{j = 0}^{r} b_{j + 1}  \le  
\sum_{j = 0}^{m + 1} a_{r' + 2 + j} + \sum_{j = 0}^{m + 1} b_{r' + 2 + j},
\end{equation*}

\item for $d \ge 2r' + m  + \alpha + 6$ and $0 \le \alpha \le r + 1$, 
\begin{equation*}
(m' + 1)a_1 + (m + \alpha - r + 1)b_0  + \sum_{j = 0}^{r} b_{j + 1}  \le  
\sum_{j = 0}^{m'}  a_{r' + \alpha +  3 + j} + \sum_{j = 0}^{m + \alpha + 1} b_{r' + 2 + j}, 
\end{equation*}
where $m' =  \max( 2r - 2\alpha, \lfloor  \frac{r  + r' - 2\alpha}{2} \rfloor)$ for $0 \le \alpha \le r$ and 
$m' = -1$ if $\alpha = r + 1$. 

\end{enumerate}

\end{corollary}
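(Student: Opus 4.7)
The plan is to deduce Corollary~\ref{dos} from Theorem~\ref{variant} by evaluating the theorem at the explicit non-vertex point $\tilde\lambda = \tilde\mu = (\underbrace{1,\ldots,1}_{m+1},0,\ldots,0) \in \R^{r+r'+1}$ (and for part~(3), at the analogous point $\tilde\lambda' \in \R^{r+r'+1 - 2\alpha}$ with $m'+1$ leading ones). The first step is to verify that Theorem~\ref{variant}, although stated for vertices of $Q(r,r')$, actually holds at every point of $Q(r,r')$. This is a convexity argument: the inequality is linear in $(\lambda_0,\ldots,\lambda_{r+r'})$ and $(\mu_0,\ldots,\mu_{r+r'})$, so it descends from vertex inequalities to convex hulls. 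Since the defining constraints of $Q(r,r')$ are all lower bounds, the recession cone is $\R_{\ge 0}^{r+r'+1}$, and the residual check on recession directions $y$ reduces to $(\sum_j y_j) a_1 \le \sum_j y_j a_{r'+4+j}$ and the analogue for $b$, both of which follow from Theorem~\ref{refinement}  ($a_1 \le a_i$ for $2 \le i \le d-1$ and $b_0 \le b_i$ for $1 \le i \le d-2$) once the dimension bound places the relevant indices in the valid range.

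Next I would verify $\tilde\lambda \in Q(r,r')$. Constraint~\eqref{cat1} holds trivially since $m \ge 2r \ge r$. Constraint~\eqref{cat2} holds because every $i$ with $r+1 \le i \le \lfloor (r+r')/2 \rfloor$ satisfies $i \le m$ and $2i+1 \ge r+1$, so $\tilde\lambda_i = 1 \ge (r+1)/(2i+1)$. Constraint~\eqref{cat3} becomes an equality, as $2r-i \le 2r \le m$ forces each summand to equal $1$. Constraint~\eqref{cat4} requires the most care: using that $i < (r+r')/2$ combined with $m \ge \lfloor(r+r')/2\rfloor$ forces $i \le m$, the sum $\tilde\lambda_i + \cdots + \tilde\lambda_{r+r'-i}$ splits into two subcases. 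When $r+r'-i \le m$, clearing denominators yields $r'(r+r'+1-2i) \ge 0$, immediate from $i<(r+r')/2$. When $r+r'-i > m$, the inequality reduces to $(r+r'+1)(m-r) \ge i(r'-r-1)$, which in the nontrivial case $r' \ge r+2$ follows from $i \le m$ and $m \ge (r+r'-1)/2$ via the elementary comparison $2m(r+1) \ge r(r+r'+1)$.

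Finally, substituting $\tilde\lambda$, $\tilde\mu$, and $\tilde\lambda'$ into Theorem~\ref{variant} produces the corollary directly in each part. The coefficient of $a_1$ becomes $\sum_j \tilde\lambda_j - r = m+1-r$ and that of $b_0$ becomes $m+1-r+\alpha$, matching the corollary; the weighted sum telescopes as $a_{r'+3} + \sum_{j=0}^m a_{r'+4+j} = \sum_{j=0}^{m+1} a_{r'+3+j}$, and the $b$-side is identical. In part~(3), the two $b$-ranges $\{r'+2,\ldots,r'+2+\alpha\}$ and $\{r'+\alpha+3,\ldots,r'+\alpha+3+m\}$ are consecutive and concatenate into $\{r'+2,\ldots,r'+\alpha+m+3\}$, producing $\sum_{j=0}^{m+\alpha+1} b_{r'+2+j}$. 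The edge case $\alpha = r+1$ is handled separately: $Q(-1,r'-r-1)$ is taken as the origin, giving $m' = -1$ and an empty $a$-sum on the right. The principal obstacle is the case analysis for constraint~\eqref{cat4} in the second paragraph; everything else is routine bookkeeping.
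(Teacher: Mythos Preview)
Your proposal is correct and follows essentially the same route as the paper: substitute the point $(\underbrace{1,\ldots,1}_{m+1},0,\ldots,0)$ (and its analogue with $m'$) into Theorem~\ref{variant}. Your verification that this point lies in $Q(r,r')$ is exactly the content of Lemma~\ref{plan}, which the paper simply cites; your arithmetic for constraint~\eqref{cat4} is a mild repackaging of the paper's (the paper uses $2i \le r+r'+1$ and $r'-r-1 \le 2(m-r)$ directly, you route through $i \le m$ and the bound $2m(r+1) \ge r(r+r'+1)$, but both are equivalent).

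The one genuine addition in your argument is the explicit convexity/recession-cone step extending Theorem~\ref{variant} from vertices of $Q(r,r')$ to arbitrary points. The paper does not spell this out, because the \emph{proof} of Theorem~\ref{variant} (via the analogue of Lemma~\ref{full4}) already establishes the inequality for every point of $Q(r,r')$, not merely for vertices; the restriction to vertices in the theorem statement is just to record the sharpest inequalities. Your approach has the virtue of working from the theorem \emph{statement} alone, at the cost of invoking Theorem~\ref{refinement} for the recession directions---which is available here since $P$ contains an interior lattice point. Either way the deduction is immediate.
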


\begin{example}\label{example2}
Setting $\alpha = r = r' = 0$ in the third part of the above corollary implies that for $d \ge 6$, 
\[
a_1 + b_0 + b_1 \le a_3 + b_2 + b_3,
\]
or, equivalently, 
\[
h^*_{1} + h^*_2  \le h^*_{d - 3} + h^*_{d - 2}.
\]
We claim that the vector $v  = (1,1,2,1,1,2,1)$ can not be realized as the coefficients of the  $h^*$-polynomial of a lattice polytope. In this case, $a(t) = 1 + t + t^2 + t^3 + t^4 + t^5 + t^6$ and $b(t) = t + t^4$. On the one hand, $1 = a_{0} \leq a_{1} \leq a_{i}$
for $2 \le i  \le 5$, and $0 \le b_0 \le b_i$ for $1 \le i \le 4$, so the vector satisfies all the inequalities of Theorem~\ref{refinement}. On the other hand, $h^*_1 + h^*_2 = 3 > h^*_3 + h^*_4 = 2$, so $v$ violates the above inequality with $d = 6$. 
\end{example}

\begin{example}
As in Examples~\ref{equal} and \ref{zero}, Example~\ref{swim} and Example~\ref{swum} give explicit formulas  for the inequalities in Theorem~\ref{variant} in the cases when $r = r'$ and $r = 0$, respectively. 
\end{example}

Using the previous example,  in Figure~\ref{int} we compute all the inequalities from Theorem~\ref{variant} in dimension at most $8$. 

\begin{figure}[htb]

\begin{tabular}{ | l |  l | l | l |}
\hline
Inequality & $(\alpha, r,r')$& type & 
Dimension 
\\
\hline
$a_1 + b_0 + b_1 \le a_3 + b_2 + b_3$  & $(0,0,0)$ & \eqref{try3} &  $d \ge 6$ \\
$a_1 + a_2 +  b_0 + b_1 \le a_3 + a_4 + b_2 + b_3$  & $(0,0,0)$ & \eqref{try1} &  $d \ge 7$ \\
$2b_0 + b_1 \le b_2 + b_3 + b_4$  & $(1,0,0)$ & \eqref{try3} & $d \ge  7$\\
    $a_1 + b_0 + b_1 \le a_4 + b_3 + b_4$ & $(0,0,1)$ & \eqref{try3} & $d \ge  8$ \\ 
 
\hline 
\end{tabular}

 \caption{Inequalities from Theorem~\ref{variant} in 
 dimension at most $8$.}
        \label{int}
        \end{figure}



Recall that Theorem~\ref{all5} states that we know all balanced inequalities for $d \le 5$. 
In the case when $d = 6$, the above results give `essentially all' balanced 
inequalities.
We will make this statement more precise later in the introduction (Remark~\ref{noncon}), and limit ourselves for the moment to the following theorem, in which inequalities \eqref{take1} and \eqref{take2} follow from Theorem~\ref{refinement} and inequality \eqref{take3} follows from Example~\ref{example2}.  
We say that a linear inequality $\sum_{i = 1}^{d} \alpha_i h^*_i \ge 0$ is \emph{strictly balanced} if 
$\sum_{i = 1}^d \alpha_i = 0$. Note that the term $h^*_0 = 1$ does not appear in
the definition of 
 strictly balanced, as opposed to balanced.  

\begin{theorem}\label{six}
In dimension $6$, every strictly balanced inequality for $h^*$-polynomials of polytopes containing an interior lattice point can be deduced from the following inequalities:

\begin{enumerate}

\item\label{take1} $h^*_6 \le h^*_1 \le h^*_5 \le h^*_2$

\item\label{take2} $h^*_1 + h^*_2 \le h^*_4 + h^*_5 \le h^*_2 + h^*_3$

\item\label{take3} $h^*_1 + h^*_2 \le h^*_3 + h^*_4$.

\end{enumerate} 
\end{theorem}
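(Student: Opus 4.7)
The proof is a polyhedral duality argument combined with explicit constructions of six-dimensional lattice polytopes with interior lattice points. Let $L$ denote the five-dimensional subspace of $\R^6$ consisting of strictly balanced functionals $\alpha = (\alpha_1, \ldots, \alpha_6)$ with $\sum_{i = 1}^{6} \alpha_i = 0$, and let $K^\vee \subseteq L$ be the convex cone generated by the six strictly balanced functionals appearing in \eqref{take1}, \eqref{take2}, \eqref{take3}. These inequalities are already known to be valid by Theorem~\ref{refinement} and Example~\ref{example2}, so $K^\vee$ is contained in the cone of all valid strictly balanced inequalities; the content of the theorem is the reverse inclusion.

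I would first switch to the $(a,b)$-coordinates via $h^*(t) = a(t) + tb(t)$ together with the symmetries $a(t) = t^6 a(t^{-1})$ and $b(t) = t^5 b(t^{-1})$. In dimension six this leaves the six free parameters $a_1, a_2, a_3, b_0, b_1, b_2$ (with $a_0 = 1$), and a direct substitution shows that the six inequalities in \eqref{take1}, \eqref{take2}, \eqref{take3} translate, in order, to
\[
b_0 \ge 0, \quad b_1 \ge b_0, \quad a_2 \ge a_1, \quad b_2 \ge b_0, \quad a_3 \ge a_1, \quad a_3 + 2b_2 \ge a_1 + b_0 + b_1.
\]
Let $C \subseteq \R^6$ denote the polyhedral cone these six inequalities cut out. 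By Farkas' lemma, the theorem reduces to showing that every extreme ray of $C$ is a positive multiple of an achievable $(a_1, a_2, a_3, b_0, b_1, b_2)$-vector, or at least lies in the closure of the conic hull of such vectors.

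Next, I would enumerate the extreme rays of $C$ by the standard finite facet-intersection calculation in $\R^6$ and, for each extreme ray $\rho$, exhibit a six-dimensional lattice polytope $P_\rho$ with an interior lattice point whose reduced $(a,b)$-vector lies on $\rho$. Here I would draw from Payne's family in \cite{PayEhrhart} (already used to prove the analogous Theorem~\ref{all5}), together with standard operations such as lattice pyramids, free sums of reflexive polytopes, and Cayley sums, all of which have a predictable and explicit effect on $h^*(t)$ and hence on $a(t)$ and $b(t)$. Once $P_\rho$ is exhibited for every extreme ray, the conic hull of achievable $(a,b)$-vectors contains $C$; combined with the reverse containment already known, this forces the two cones to coincide, and dualizing yields the theorem.

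The main obstacle is this final matching step: certain extreme rays of $C$ may correspond to fractional ratios of $(a_i, b_j)$ that are not realized by any single lattice polytope and that must instead be approached as limits, for example by iterating the pyramid construction or by taking large dilates. In each such case one must verify that the approximating sequence of polytopes continues to contain an interior lattice point, so that the example remains valid within our class. Assuming this is achieved for every extreme ray of $C$, Theorem~\ref{six} follows immediately from the Farkas duality above.
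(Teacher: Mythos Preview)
Your overall strategy---set up the cone cut out by the six inequalities, enumerate its extreme rays, and realize each ray by an explicit polytope---is exactly the one the paper uses (in the coordinates $x_i = h^*_i - 1$ rather than $(a_i,b_j)$). However, two points need correction.

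First, there is a translation error. With $h^*_6 = a_6 + b_5 = a_0 + b_0 = 1 + b_0$ and $h^*_1 = a_1 + b_0$, the inequality $h^*_6 \le h^*_1$ becomes $a_1 \ge 1$, not $b_0 \ge 0$; the latter is $h^*_6 \ge h^*_0$, which is balanced but not strictly balanced. This matters: with $b_0 \ge 0$ your cone $C$ acquires a lineality line through $(1,1,1,0,0,0)$, whereas with the correct affine constraint $a_1 \ge 1$ the object is a polyhedron rather than a cone. The paper sidesteps this by working in the shifted coordinates $x_i = h^*_i - 1$, in which all six inequalities are genuinely homogeneous, and then passes to the quotient $\R^6/\R(1,\dots,1)$, which is precisely the dual picture for \emph{strictly} balanced functionals.

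Second, your treatment of the ``obstacle'' is where the real content lies, and the paper's resolution is both sharper and different from what you suggest. In the $x$-coordinates the cone $C''(6)$ cut out by the six inequalities has seven extremal rays; six are realized directly by Payne's weighted simplices $P(\alpha_0,\dots,\alpha_6)$ (no pyramids, free sums, or Cayley constructions are needed), but the seventh ray, through $(0,2,1,1,2,0)$, is \emph{not} realized by any single polytope. Rather than approximating by a sequence, the paper observes that $(1,3,2,2,3,1) = (0,2,1,1,2,0) + (1,\dots,1)$ \emph{is} realized (by $P(8,2,2,2,2,2,1)$), and since strictly balanced inequalities are exactly those vanishing on $(1,\dots,1)$, the two vectors have the same image in the quotient. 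This single translate suffices; no limiting argument is required, and indeed a limit argument would need to produce points arbitrarily close to the open ray, which your outline does not do.
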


In the case when dimension equals $7$, we summarize our results in Figure~\ref{seven} (after removing some redundancies), and include two conjectural inequalities. The conjectural inequalities would give `essentially all' balanced inequalities in dimension $7$.

\begin{figure}[htb]

\begin{tabular}{ | l |  l | l |}
\hline
$1 = h^*_0 \le h^*_7 \le h^*_1 \le h^*_6 \le h^*_2$  & $1 = a_0 \le a_1 \le a_2$, $0 \le b_0 \le b_1$ & Theorem~\ref{refinement}  \\
$h^*_1 + h^*_2 \le h^*_5 + h^*_6$  & $b_0 \le b_2$ & Theorem~\ref{refinement}    \\
  $h^*_1 + h^*_2 + h^*_3 \le h^*_4 + h^*_5 + h^*_6$  & $b_0 \le b_3$ & Theorem~\ref{refinement}   \\ 
    $h^*_1 + h^*_2 \le h^*_4 + h^*_5$ & $a_1 + b_0 + b_1 \le a_3 + b_2 + b_3$ &  Theorem~\ref{variant} \eqref{try3} \\ 
   $h^*_1 + h^*_2 \le h^*_3 + h^*_4$    & $a_1 + a_2 + b_0 + b_1 \le a_3 + a_4 + b_2 + b_3$ &  Theorem~\ref{variant} \eqref{try1} \\ 
    $2h^*_5 + h^*_6 \le h^*_2 + 2h^*_3$ & $a_1 + a_2 \le a_3 + a_4$ & Theorem~\ref{superA} \\
 $2h^*_1 + 3h^*_2 + h^*_3 \le h^*_4 + 3h^*_5 + 2h^*_6$ & $2b_0 + b_1 \le b_2 + b_3 + b_4$ & Theorem~\ref{variant} \eqref{try3} \\
$2 h^*_1 + 3 h^*_2 + 2h^*_3 \le 2h^*_4 + 4 h^*_5 + h^*_6$ & 
$\frac{1}{2}a_1 + b_0 + b_1 \le \frac{1}{2}a_2 + b_2 + b_3$  &  Conjecture \\
$4 h^*_1 + 7 h^*_2 + 2h^*_3 \le 4h^*_4 + 6 h^*_5 + 3h^*_6$   & 
$\frac{1}{4}a_1 + \frac{1}{4}a_2 + b_0 + b_1 \le \frac{1}{2}a_3 + b_2 + b_3$ &  Conjecture \\
\hline 
\end{tabular}

 \caption{Inequalities in 
 dimension $7$ for polytopes with an interior lattice point.}
        \label{seven}
        \end{figure}

\begin{example}\label{reflex}
A lattice polytope $P \subseteq \R^d$ is \emph{reflexive} if it contains the origin as its unique interior lattice point, and if there exists a piecewise $\Z$-linear function $\psi: \R^d \rightarrow \R$ such that 
$P =  \{ v \in \R^d \mid \psi(v) \le 1 \}$. A classical result of Hibi states that a 
lattice polytope $P$ is a translation of a reflexive polytope if and only if its $h^*$-polynomial has symmetric coefficients \cite{HibDual}. Observe that the latter condition is equivalent to requiring that $P$ has degree $d$ and satisfies $h^*(t) = a(t)$ and $b(t) = 0$.

The coefficients of the $h^*$-polynomial of a reflexive polytope are unimodal for $d \le 5$, and
Hibi conjectured that unimodality holds in general. 
Payne and Musta\c t\v a gave a counterexample 
in
\cite{MPEhrhart}, and
further counterexamples are given by Payne in all dimensions $d \geq 6$ in  \cite{PayEhrhart}. 

Our results give a complete description of the inequalities satisfied by the $h^*$-polynomial of a reflexive lattice polytope in dimension at most $6$. The results are summarized 
in Figure~\ref{reflexive}. In dimension $7$, our results show that every strictly balanced inequality
 satisfied by the $h^*$-polynomial $(1, h^*_1, h^*_2, h^*_3, h^*_3, h^*_2, h^*_1, 1)$ of a reflexive polytope follows from the two inequalities $h^*_1 \le h^*_2$ and $h^*_1 + h^*_2 \le 2h^*_3$. 


\begin{figure}[htb]

\begin{tabular}{| l  |  l  |   l  | }
\hline
$h^*$-polynomial  & Inequalities &
$d$ 
\\
\hline
 $(1,h^*_1,1)$  & $1 \le h^*_1$ & $2$ \\
  $(1, h^*_1, h^*_1, 1)$ &  $1 \le h^*_1$ & $3$ \\
$(1, h^*_1, h^*_2, h^*_1, 1)$  & $1 \le h^*_1 \le h^*_2$ & 4\\
   $(1, h^*_1, h^*_2, h^*_2, h^*_1, 1)$ & $1 \le h^*_1 \le h^*_2$ & 5 \\ 
 $(1, h^*_1, h^*_2, h^*_3, h^*_2, h^*_1, 1)$ & $1 \le h^*_1 \le h^*_2, h^*_3$ & 6 \\ 
\hline 
\end{tabular}

 \caption{Balanced inequalities  for $h^*$-polynomials of reflexive polytopes in 
 dimension  at most $6$. } 
        \label{reflexive}
        \end{figure}

\end{example}

\begin{remark}\label{noncon}
We observe that convexity plays no role in the proofs of the results above.
In particular, rather than considering a lattice polytope with an interior lattice point, one could 
consider a piecewise $\Z$-linear function $\psi: \R^d \rightarrow \R$ on a projective, rational fan $\triangle \subseteq \R^d$ and prove the same inequalities for $Q = \{ v \in \R^d \mid \psi(v) \le 1 \}$. Moreover, one could consider a polytopal complex $Q'$ with faces given by $\{ \sigma \cap Q \mid \sigma \in \triangle \}$, but allow the lattice structure to vary on the faces of $Q'$. That is, if $\sigma$ and $\tau$ are cones in $\triangle$, then we consider $\sigma \cap Q$ and $\tau \cap Q$ as lattice polytopes with respect to lattices $N$ and $N'$, respectively, and only require that $N$ and $N'$ agree along $\sigma \cap \tau$. 
Allowing these more general objects, in Section~\ref{last} we prove that the inequalities in Theorem~\ref{six} give all possible balanced inequalities. 

\end{remark}

We conclude the introduction with an outline of the contents of the paper. In Section~\ref{decom}, we recall  some notions from \cite{YoInequalities} and set notation for the first three sections. In Section~\ref{add}, we explore some consequences of  Kemperman and Scherk's theorem  and, in Section~\ref{proof1}, we prove Theorem~\ref{superA} and Corollary~\ref{corA}. In the remainder of the paper we assume that all lattice polytopes contain an interior lattice point. 
In Section~\ref{decomposition}, we set notation and prove Theorem~\ref{refinement}. In Section~\ref{revisit}, we extend the results of Section~\ref{add} for polytopes with an interior lattice point, and, in Section~\ref{proof2}, we prove Theorem~\ref{variant} and Corollary~\ref{dos}. In Section~\ref{last}, we compute examples and prove Theorem~\ref{all5} and Theorem~\ref{six}.  
Throughout the paper, we refer the reader to \cite{FulIntroduction} for the necessary background on remarks involving toric varieties.

\section{Preliminaries}\label{decom}

The goal of this section is to briefly recall some notions from \cite{YoInequalities} and set notation for the proof of Theorem~\ref{superA}. 

We fix a  $d$-dimensional lattice polytope $P$ in a lattice $N$ of rank $d$, with
$h^*$-polynomial $h^*(t)$. 
We will often identify the $h^*$-polynomial with its vector of coefficients $( h^*_{0} , h^*_{1}, \ldots, h^*_{d}    )$. 
Recall that the degree $s$ of $P$ is the degree of $h^*(t)$ and the codegree $l$ of $P$ is defined by $d + 1 = s + l$.
Let $a(t)$ and $b(t)$ denote the polynomials with coefficients given by 
\[
a_{i + 1} = h^*_{0} + \cdots + h^*_{i + 1} - h^*_{d} - \cdots - h^*_{d - i} \, \textrm{ for } - 1 \le i \le d - 1, 
\]
\[
b_{i} = -h^*_{0} - \cdots - h^*_{i} + h^*_{s} + \cdots + h^*_{s - i} \, \textrm{ for } 0 \le i \le s - 1,
\]
By Lemma 2.3 in \cite{YoInequalities}, $a(t) = t^d a(t^{-1})$, $b(t) = t^{s - 1}b(t^{-1})$ and
\[
(1 + t + \cdots + t^{l - 1})h^*(t) = a(t) + t^{l}b(t).
\] 

Our goal is to recall an explicit description of  the polynomial $a(t)$. We refer the reader to 
 \cite{YoInequalities} for a similar description of $b(t)$. 
Fix a regular, lattice triangulation $\mathcal{S}$ of the boundary $\partial P$ of $P$ which contains every lattice point in $\partial P$ as a vertex, and regard the empty face as a face of dimension $-1$. 
An $r$-dimensional lattice polytope $G$ in $N$ is called a \emph{lattice-free simplex} if $G$ contains exactly $r + 1$ lattice points (necessarily its vertices). 
Note that by construction, every face of $\mathcal{S}$ is a lattice-free simplex. 
If $F$ is a non-empty face of $\mathcal{S}$ with vertices $v_{1}, \ldots, v_{k}$,  then
let \[ \BOX(F) = \{  v \in N \times \Z \mid   v = \sum_{i = 1}^{k} \alpha_{i} (v_{i}, 1), \; 0 < \alpha_{i} < 1 \},
\] 
and set $\BOX(\emptyset) = \{ 0 \}$. 
Recall that the \emph{$h$-polynomial} of a face $F$ of $\mathcal{S}$ is defined by
\begin{equation*} 
h_{F}(t) = \sum_{F \subseteq G} t^{\dim G - \dim F} (1 - t)^{d - \dim G}.
\end{equation*}
We will often write $h_{\mathcal{S}}(t) = h_{\emptyset}(t)$. The following well-known lemma follows from Poincar\'e duality and the Hard Lefschetz theorem for projective toric varieties.
Recall that a vector  $(\lambda_0, \ldots, \lambda_r)$ with symmetric coefficients is \emph{unimodal} if $\lambda_0 \le \lambda_1 \le \cdots \le \lambda_{\lfloor  r/2 \rfloor}$.

\begin{lemma}\label{patriots}
Let $\mathcal{S}$ be a regular, lattice triangulation of $\partial P$. 
If $F$ is a face of $\mathcal{S}$, then the $h$-polynomial of $F$ 
is a polynomial of degree $d - 1 - \dim F$ with symmetric, unimodal integer coefficients.
\end{lemma}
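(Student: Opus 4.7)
The plan is to realize $h_F(t)$ as (essentially) the Poincar\'e polynomial of a projective simplicial toric variety, so that symmetry and unimodality become direct consequences of Poincar\'e duality and the Hard Lefschetz theorem --- the same mechanism behind Stanley's proof of the $g$-theorem for simplicial polytopes.

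After translating so that an interior lattice point of $P$ is the origin, I would take the cone of $\mathcal{S}$ from $0$ to obtain a rational simplicial fan $\Sigma$ in $N_\R$ with support $\R^d$. Regularity of $\mathcal{S}$ is equivalent to $\Sigma$ admitting a strictly convex piecewise-linear support function, so $\Sigma$ is polytopal and $X_\Sigma$ is a projective simplicial toric variety. For each face $F \in \mathcal{S}$, write $\sigma_F \in \Sigma$ for the associated cone, of dimension $\dim F + 1$; then the orbit closure $V(\sigma_F) \subseteq X_\Sigma$ is again a projective simplicial toric variety, of dimension $n := d - 1 - \dim F$, whose defining fan is the star of $\sigma_F$ modulo the linear span of $\sigma_F$. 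Under the bijection between cones of $\Sigma$ containing $\sigma_F$ and faces $G \in \mathcal{S}$ containing $F$ (with $\dim \tau = \dim G + 1$), the defining formula for $h_F(t)$ becomes the $h$-polynomial of this star fan (up to the standard conventions for the empty face / zero cone), and by the Jurkiewicz--Danilov theorem this equals $\sum_i \dim H^{2i}(V(\sigma_F),\Q)\, t^i$.

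The three assertions of the lemma now follow at once. That $h_F(t)$ is a polynomial of degree $n$ with non-negative integer coefficients is built in. Symmetry $h_{F,i} = h_{F,n-i}$ is Poincar\'e duality on the projective $\Q$-homology manifold $V(\sigma_F)$ --- a simplicial toric variety has at worst finite quotient singularities, so rational Poincar\'e duality holds. Unimodality is Hard Lefschetz: for an ample class $\ell \in H^2(V(\sigma_F),\Q)$, the maps $\ell^{n-2i} \colon H^{2i}(V(\sigma_F),\Q) \to H^{2n-2i}(V(\sigma_F),\Q)$ are isomorphisms for $i \le n/2$, which by standard linear algebra forces $h_{F,i-1} \le h_{F,i}$ in that range. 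The main obstacle, such as it is, is really only one of bookkeeping --- keeping straight the shift $\dim \sigma_F = \dim F + 1$ and handling the $(-1)$-dimensional empty face correctly in the defining sum --- after which the lemma reduces to a direct appeal to well-documented toric statements (see e.g.\ Fulton's book).
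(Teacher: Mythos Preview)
Your approach is exactly the one the paper invokes --- it simply asserts the lemma ``follows from Poincar\'e duality and the Hard Lefschetz theorem for projective toric varieties'' without writing out the details you supply. One small caveat: at this point in the paper $P$ is \emph{not} assumed to contain an interior lattice point, so you should cone from an interior rational point (or note that $h_F(t)$ is a purely combinatorial invariant of the link, so you may refine the lattice harmlessly); with that adjustment the argument goes through as written.
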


The following interpretation of the polynomial $a(t)$ appears in the proof of Theorem~\ref{previous} in \cite{YoInequalities}. 

\begin{lemma}\label{nonlemma}
If $u: N \times \Z \rightarrow \Z$ denotes projection onto the second co-ordinate, then
\begin{equation*}
a(t) = \sum_{F \in \mathcal{S}} \sum_{w \in \BOX(F)} t^{u(w)} h_{F}(t). 
\end{equation*}
\end{lemma}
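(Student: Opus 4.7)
The plan is to derive the identity by interpreting both sides as Ehrhart-type generating functions on the boundary of the cone $C(P) := \R_{\ge 0} \cdot (P \times \{1\}) \subseteq N_\R \times \R$, and then comparing them via the fan structure induced by $\mathcal{S}$.

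First, I would identify $a(t)$ (up to the appropriate power of $(1-t)$) with the generating function for lattice points on $\partial C(P)$. Starting from $\mathrm{Ehr}_P(t) = h^*(t)/(1-t)^{d+1}$ and applying Ehrhart reciprocity in the form
$\sum_{m \ge 1} f_{\mathrm{int}\, P}(m) t^m = t^l h^*(t^{-1}) t^{d+1-l}/(1-t)^{d+1}$,
the difference yields the generating series for $\partial C(P) \cap (N \times \Z)$. A short manipulation using the symmetries $a(t) = t^d a(t^{-1})$, $b(t) = t^{s-1} b(t^{-1})$, and the identity $(1 + t + \cdots + t^{l-1})h^*(t) = a(t) + t^l b(t)$ then rewrites this series cleanly in terms of $a(t)$.

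Second, I would compute the very same series geometrically using the simplicial fan $\Sigma$ whose cones are $C(F) := \R_{\ge 0} \cdot (F \times \{1\})$ for $F \in \mathcal{S}$, with $C(\emptyset) = \{0\}$. Since $|\Sigma| = \partial C(P)$, the lattice points in $\partial C(P)$ partition by the relative interiors of the cones $C(F)$, and the usual simplicial-cone calculation gives
\[
\sum_{v \in \partial C(P) \cap (N \times \Z)} t^{u(v)} = \sum_{F \in \mathcal{S}} \frac{\Pi_F^\circ(t)}{(1-t)^{\dim F + 1}},
\]
where $\Pi_F^\circ(t)$ sums $t^{u(w)}$ over lattice points in $\{\sum_i \alpha_i (v_i,1) : 0 < \alpha_i \le 1\}$ (the parallelepiped parametrizing lattice points in $\mathrm{relint}\, C(F)$ modulo translations by ray generators).

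Third, I would bridge $\Pi_F^\circ(t)$ to the $\BOX$ polynomials by stratifying the parallelepiped by $T := \{i : \alpha_i = 1\}$: every lattice point decomposes uniquely as $\sum_{i \in T} (v_i, 1) + w$ with $w \in \BOX(G)$, where $G \subseteq F$ is the face with vertices $\{v_i : i \notin T\}$. Summing over $T$ yields the identity
\[
\Pi_F^\circ(t) = \sum_{G \subseteq F} t^{\dim F - \dim G} \sum_{w \in \BOX(G)} t^{u(w)}.
\]

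Finally, substituting this back, clearing the denominator $(1-t)^d$, and swapping the order of summation over $F$ and $G$ collects, for each $G$, the factor $\sum_{F \supseteq G} t^{\dim F - \dim G}(1-t)^{d-\dim F - 1}$, which is precisely $h_G(t)$. This produces the claimed formula for $a(t)$. The main technical obstacle is the careful bookkeeping of the half-open parallelepipeds $\Pi^\circ(F)$ versus $\BOX(F)$ and the correct tracking of the $(1-t)$ normalizations through the reciprocity step and the swap of summation order.
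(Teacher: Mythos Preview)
Your argument is correct and essentially complete. The paper itself does not prove this lemma; it merely records the statement and defers to the proof of Theorem~\ref{previous} in \cite{YoInequalities}. So you are supplying considerably more detail than the paper does. The route you take --- identify $a(t)/(1-t)^d$ with the Ehrhart series of $\partial P$ via reciprocity, then decompose that series over the simplicial fan on $\partial C(P)$ in the Betke--McMullen style, stratify the half--open parallelepipeds by faces to pass from $\Pi^\circ_F$ to the $\BOX$ polynomials, and finally swap the order of summation to collect the local $h$-polynomials --- is exactly the standard argument and is almost certainly the one in the cited reference.

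One bookkeeping remark worth flagging. After the swap you obtain, for each face $G$, the factor
\[
\sum_{F \supseteq G} t^{\dim F - \dim G}(1-t)^{\,d-\dim F-1},
\]
and you identify this with $h_G(t)$. This is indeed the $h$-polynomial of $\text{link}_{\mathcal S}(G)$ with the correct normalization for the $(d-1)$-dimensional complex $\mathcal S$, and it has the degree $d-1-\dim G$ asserted in Lemma~\ref{patriots}. The displayed definition in Section~\ref{decom} carries the exponent $(1-t)^{d-\dim G}$, which is the normalization appropriate to a $d$-dimensional complex (as in Section~\ref{decomposition}) and would give the wrong degree here; your exponent is the one consistent with Lemma~\ref{patriots} and with the statement of Lemma~\ref{nonlemma}.
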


\section{Lattice-free simplices and additive number theory}\label{add}

The goal of this section is to use additive number theory to 
analyze the distribution of lattice points in the cone over a lattice-free simplex. 

Our main tool will be the following famous result in additive number theory 
due to  Kemperman and Scherk. The author would like to thank Jeff Lagarias for some brilliant insights  and, in particular, for bringing a related theorem to the author's attention.  

\begin{theorem}[\cite{MSAdvanced}, \cite{KemComplexes}, \cite{KemSmall}]\label{useful}
If $A$ and $B$ are finite  subsets of an abelian group and $A \cap (-B) = \{ 0 \}$, then \[
|A + B| \ge |A| + |B| - 1.
\]
\end{theorem}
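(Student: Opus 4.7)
The plan is to prove this by induction on $|B|$, using the classical Dyson $e$-transform to step down. As preparation, I note that $0 \in A \cap (-B)$ forces $0 \in A$ and $0 \in B$; moreover, the hypothesis is equivalent to the statement that $0 \in A + B$ has the unique representation $0 = 0 + 0$, since any $a + b = 0$ with $a \in A$, $b \in B$ forces $a = -b \in A \cap (-B) = \{0\}$, hence $a = b = 0$.

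The base case $|B| = 1$ gives $B = \{0\}$ and $|A + B| = |A| = |A| + |B| - 1$. For the inductive step with $|B| \ge 2$, I would select $b \in B \setminus \{0\}$ and form the Dyson transform
\[
A' = A \cup (A + b), \qquad B' = B \cap (B - b).
\]
The standard identities $A' + B' \subseteq A + B$, $|A'| + |B'| = |A| + |B|$, and $0 \in A' \cap B'$ hold without any hypothesis. If additionally the uniqueness $A' \cap (-B') = \{0\}$ is preserved and $|B'| < |B|$, then the inductive hypothesis applied to $(A', B')$ yields
\[
|A + B| \ge |A' + B'| \ge |A'| + |B'| - 1 = |A| + |B| - 1,
\]
as desired.

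The main obstacle is that the two conditions above are not automatic. To check preservation of uniqueness, a nonzero $c \in A' \cap (-B')$ must satisfy $c \in A$ or $c - b \in A$, together with $-c \in B$ and $b - c \in B$. Case analysis using the original hypothesis $A \cap (-B) = \{0\}$ rules out every possibility except the borderline case $c = b$, which arises only when $-b \in B$. I would therefore pick $b$ with $-b \notin B$ whenever such a choice exists. Similarly, $|B'| = |B|$ exactly when $b$ is a period of $B$, so I would choose $b$ that is not a period when possible.

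When no good choice of $b$ is available, $B$ is symmetric ($-B = B$) or $B$ is a union of cosets of a nontrivial subgroup $H$ of periods, and these structured situations must be handled separately. Swapping the roles of $A$ and $B$ reduces the symmetric case to the periodic one. For the periodic case I would invoke Kneser's theorem on $A + B$, namely $|A + B| \ge |A + K| + |B + K| - |K|$ where $K$ is the stabilizer of $A + B$: the uniqueness of $0 = 0 + 0$ can be used to rule out nontrivial $K$, after which Kneser's bound collapses to $|A + B| \ge |A| + |B| - 1$. This closes the induction.
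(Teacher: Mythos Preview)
The paper does not prove this theorem; it is quoted from the cited references and used as a black box. So there is no in-paper argument to compare your attempt against, and I comment only on the correctness of your sketch.

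The central step is wrong. For the transform you wrote, $A' = A \cup (A+b)$ and $B' = B \cap (B-b)$, the claimed identity $|A'| + |B'| = |A| + |B|$ is false. Take $G = \Z$, $A = \{0,2\}$, $B = \{0,1,2\}$, $b = 1$: then $A' = \{0,1,2,3\}$ and $B' = \{0,1\}$, giving $|A'| + |B'| = 6 \neq 5 = |A| + |B|$. The genuine Dyson $e$-transform \emph{mixes} the two sets --- e.g.\ $A' = A \cup (B + e)$ and $B' = B \cap (A - e)$ --- and it is exactly this cross-structure that makes the size identity hold. With the transform as you wrote it, the induction collapses.

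Your fallback plan also has a gap. Invoking Kneser's theorem for the residual cases is appealing to a result at least as deep as the target, and the one-line claim that the unique representation of $0$ forces the stabilizer $K$ of $A+B$ to be trivial is not correct as stated: nontrivial $K$ is not excluded. What \emph{is} true is that from $(A\cap K)\cap(-(B\cap K))=\{0\}$ one gets $|A\cap K| + |B\cap K| \le |K| + 1$, and combining this coset count with Kneser's bound $|A+B| \ge |A+K| + |B+K| - |K|$ yields $|A+B| \ge |A| + |B| - 1$ for any $K$. But once you grant yourself Kneser plus that extra step, the Dyson-transform induction is superfluous; you should either run the entire proof through Kneser from the start, or supply the correct $e$-transform and handle the degenerate cases without it.
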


We now fix our notation throughout this section. 
Recall from the previous section that a  $(d - 1)$-dimensional lattice polytope $G$ in $N$ is called a \emph{lattice-free simplex} if $G$ contains exactly $d$ lattice points (necessarily its vertices). 
We fix a $(d - 1)$-dimensional lattice-free simplex  $G$ with  vertices $v_{1}, \ldots, v_{d}$ and
let $C_G$ denote the cone over $G \times \{1\}$ in $N_{\R} \times \R$, where $N_{\R} = N \otimes_{\Z} \R$.  
Recall that if $F$ is a non-empty face of $G$ with vertices $v_{i_1}, \ldots,v_{i_{r}}$,  then 
\[ \BOX(F) = \{ v \in N \times \Z \mid
v = \sum_{j = 1}^{r} \alpha_j 
(v_{i_j}, 1) , \; 0 < \alpha_j < 1 \}, \] and $\BOX(\emptyset) = \{ 0 \}$. 
If $N(G)$ denotes the quotient of $N \times \Z$ by the sublattice generated by $(v_{1},1),\ldots, (v_{d},1)$, then $N(G)$ is a finite abelian group with elements in bijection with $\coprod_{F \subseteq G} \BOX(F)$, and we will often identify elements of $N(G)$ with their corresponding lattice points.

\begin{remark}
The lattice-free polytope $G$ determines a $\Q$-factorial, Gorenstein and terminal toric singularity $U = \A^d/N(G)$. More specifically, the action of $N(G)$ may be described as follows: if an element $v \in N(G)$ is represented by a lattice point $\sum_{i = 1}^{d} \alpha_{i} (v_{i}, 1)$
with $0 \le \alpha_{i} < 1$, then $v$ acts on $\A^{d}$ via co-ordinatewise multiplication by $(e^{2 \pi i \alpha_{1}}, \ldots, e^{2 \pi i \alpha_{d}})$. Moreover, every $\Q$-factorial, Gorenstein and terminal toric singularity arises from a lattice-free simplex in this way \cite{FulIntroduction}.
\end{remark}

If $u: N \times \Z  \rightarrow \Z$ denotes projection onto the second co-ordinate and $v \in N(G)$ is represented by a lattice point $\sum_{i = 1}^{d} \alpha_{i} (v_{i}, 1)$
with $0 \le \alpha_{i} < 1$, then the \emph{age} of $v$ is defined to be $u(v) = \sum_{i = 1}^{d} \alpha_{i} \in \N$. Our goal will be to use Kemperman and Scherk's theorem to put constraints on the distribution of the ages of the lattice points in $N(G)$. 
We will often use the following observation in our calculations.

\begin{remark}\label{helpful1}
If $v =  \sum_{j = 1}^{r} \alpha_j 
(v_{i_j}, 1)$ is a lattice point in $\BOX(F)$ as above, then its corresponding inverse in $N(G)$ is represented by $-v = \sum_{j = 1}^{r} (1 - \alpha_j) 
(v_{i_j}, 1)$, and $u(v) + u(-v) = r = \dim F + 1$, where the dimension of the empty face is $-1$. 
\end{remark}

We define 
\[
N(G,k,l) = \{ v \in N(G) \mid u(v) = k + 2, u(-v) = d - 2 - l \},
\]
for $0 \le k \le l \le d - 4$, and set $N(G,k,l)$ to be empty otherwise. 
Observe that since $G$ is a lattice-free simplex, $u(v) \ge 2$ for all non-zero $v \in N(G)$, and hence 
$N(G) \smallsetminus \{0\} = \coprod_{k,l} N(G,k,l)$.

\begin{remark}\label{negative}
It follows from the definition that $- N(G,k,l) = N(G,d - 4 - l, d - 4 - k)$. 
\end{remark}

\begin{remark}\label{helpful2}
By Remark \ref{helpful1}, if $v  = \sum_{i = 1}^{d} \alpha_i (v_i,1) \in N(G,k,l)$, then exactly $l - k$ of the coefficients $\alpha_i$ are zero. 
\end{remark}

The following lemma will be useful for our calculations. 

\begin{lemma}\label{flight} With the notation above, 
\[ (N(G,k,l) + N(G,m,n)) \smallsetminus \{ 0 \} \,  \subseteq \,  \coprod_{p = 0}^{k + m + 2} \;  \coprod_{q = 0}^{\min(l + m, k + n) + 2} N(G, p,q). \]
\end{lemma}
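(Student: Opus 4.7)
The plan is to prove Lemma~\ref{flight} by tracking ages and supports coordinate-by-coordinate under the addition in $N(G)$, without needing Theorem~\ref{useful} at this stage. Fix $x \in N(G,k,l)$ and $y \in N(G,m,n)$ with $z := x + y \ne 0$, and write $x = \sum_{i=1}^d \alpha_i(v_i,1)$ and $y = \sum_{i=1}^d \beta_i(v_i,1)$ with $\alpha_i,\beta_i \in [0,1)$. Then $u(x) = k+2$, $u(y) = m+2$, and by Remark~\ref{helpful1} the supports $S_x = \{i : \alpha_i > 0\}$ and $S_y = \{i : \beta_i > 0\}$ have sizes $|S_x| = u(x)+u(-x) = d+k-l$ and $|S_y| = u(y)+u(-y) = d+m-n$.

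For the bound on $p$, observe that the standard representative of $z$ has coefficients $\gamma_i = \alpha_i + \beta_i - \epsilon_i$, where $\epsilon_i \in \{0,1\}$ records a carry at $i$ (i.e., $\alpha_i + \beta_i \ge 1$). Summing over $i$ gives $u(z) = u(x) + u(y) - \#\{\text{carries}\} \le k + m + 4$, and hence $p := u(z) - 2 \le k + m + 2$.

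For the bound on $q$, apply the same reasoning to $-z = (-x) + (-y)$. The key point is that the standard representative of $-x$ has support exactly $S_x$, since a nonzero $\alpha_i$ maps to $1-\alpha_i > 0$ while a zero stays zero; likewise for $-y$. Consequently, any carry in forming $-z$ must occur at an index where \emph{both} $-x$ and $-y$ have nonzero coefficients, i.e.\ in $S_x \cap S_y$. Therefore
\[
u(-z) \ge u(-x) + u(-y) - |S_x \cap S_y| \ge (2d-4-l-n) - \min(|S_x|,|S_y|) = d - 4 - \min(k+n,\,l+m),
\]
so that $q := d - 2 - u(-z) \le \min(l+m,\,k+n) + 2$. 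Non-negativity of $p$ and $q$ follows from the fact that $G$ is lattice-free, which forces $u(v) \ge 2$ for every nonzero $v \in N(G)$, combined with the inequality $u(z) + u(-z) \le d$ (the common support size).

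The main obstacle --- really, the main observation --- is recognizing that carries in the sum $(-x) + (-y)$ are constrained to the support intersection $S_x \cap S_y$, which converts an a priori delicate additive estimate into a clean support-size bound; after that the estimate reduces to a routine minimax identity.
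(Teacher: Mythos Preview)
Your proof is correct and follows essentially the same coordinate-tracking strategy as the paper, with a minor but pleasant organizational difference in the $q$-bound. The paper bounds the number of zero coefficients of $w$ directly, splitting into indices where both $\alpha_i,\beta_i$ vanish (at most $\min(l-k,n-m)$ of these) and indices where $\alpha_i+\beta_i=1$ (at most the total carry count $k+m+4-u(w)$), then uses $q-p = \#\{\text{zeros in }w\}$ to conclude. You instead pass to $-z=(-x)+(-y)$ and observe that carries there are confined to $S_x\cap S_y$, which replaces the two-case analysis by a single support-intersection bound; the arithmetic then collapses to the same inequality $q\le\min(l+m,k+n)+2$. Both arguments are elementary and equivalent in strength; your version is slightly cleaner in that it avoids distinguishing the two sources of vanishing coefficients.
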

\begin{proof}
Consider a non-zero element $w = v + v'$ in $N(G)$, for some $v \in N(G, k,l)$ and $v' \in N(G,m,n)$, so that $w$ lies in $N(G, p, q)$ for some $p \le q$. 
By Remark \ref{helpful2}, if $v  = \sum_{i = 1}^{d} \alpha_i (v_i,1)$ and $v'  = \sum_{i = 1}^{d} \alpha_i' (v_i,1)$,  then exactly $l - k$ of the coefficients $\alpha_i$ are zero and  then exactly $m - n$ of the coefficients $\alpha_i'$ are zero. 
If $\{ x \}$ denotes the fractional part of a real number $x$, then $w = \sum_{i = 1}^{d} \{ \alpha_{i} + \beta_{i} \} (v_{i}, 1)$ and $u(w) = p + 2 \le  \sum_{i = 1}^{d} \alpha_{i} + \beta_{i} = k + m + 4$. 
We observe that at most $\min(l - k, n - m)$ of the coefficients $\alpha_{i} + \beta_{i}$ are zero and 
at most $k + m + 4- u(w)$ of the coefficients $\alpha_{i} + \beta_{i}$ equal $1$. Since exactly $q - p$ of the coefficients $\{ \alpha_{i} + \beta_{i} \}$ equal zero, we conclude that $q - p \le \min(l - k,  n - m) + k + m + 4 - (p + 2)$ and hence $q \le \min(l + m, k + n)+ 2$. 
\end{proof}

The following lemma will play a key role in the proof of Theorem~\ref{superA} in the 
succeeding section.  

\begin{lemma}\label{keyD}
With the notation above, let $0 \le r \le r'$ and $ d \ge  2r' + r + 7$. If $0 \le i \le r$ and $0 \le j \le r+ r' - i$, then 
\[
\sum_{k = 0}^{i} \, \sum_{l = 0}^{i + j - k} |N(G, k,l)|  \le 
\sum_{q = 0}^{i + j + 1} |N(G, r' + 1, r' + 1 + q)| + 
\sum_{p = 0}^{i} \, \sum_{q = 0}^{i + j - p} |N(G, r' + 2 + p, r' + 2 + q)|. \]
\end{lemma}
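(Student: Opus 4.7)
Set
\[
U = \bigsqcup_{k=0}^{i}\bigsqcup_{l=0}^{i+j-k} N(G,k,l), \quad W_1 = \bigsqcup_{q=0}^{i+j+1} N(G,r'+1,r'+1+q), \quad W_2 = \bigsqcup_{p=0}^{i}\bigsqcup_{q=0}^{i+j-p} N(G,r'+2+p,r'+2+q),
\]
so that the inequality to prove reads $|U| \le |W_1|+|W_2|$. The plan is to deduce this from Theorem~\ref{useful} (Kemperman--Scherk) applied in the finite abelian group $N(G)$, using Lemma~\ref{flight} to constrain where the relevant sumset lies.

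Concretely, I would apply Theorem~\ref{useful} to a pair $A = \{0\} \cup (c + U)$ and $B = \{0, c'\}$, where $c, c' \in N(G)$ are chosen with $u(c) = u(c') = r'+2$ (so that $c, c' \in N(G, r', \cdot\,)$). The upper bound $p \le k_0 + k + 2 = r' + k + 2$ from Lemma~\ref{flight}, with $k \in [0, i]$, forces the first age parameter of any nonzero $c + v$ or $c + c' + v$ (for $v \in U$) to lie in $[0, r' + i + 2]$, matching the upper end of the first-age range of $W_2$; the analogous $q$-bound $q \le \min(l_0+k, k_0+l)+2$ matches the second-age range of $W_1 \cup W_2$. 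The dimension hypothesis $d \ge 2r'+r+7$ ensures, via Remark~\ref{negative}, that $-c' \notin c + U$, since elements of $-U$ have first age $\ge d-4-(i+j) \ge r'+3$, whereas $-c' - c$ has first age at most $r'-1$, and hence $-c'-c \notin U$. Theorem~\ref{useful} then yields
\[
|U| + 2 \;=\; |A| + |B| - 1 \;\le\; |A + B| \;\le\; |W_1| + |W_2| + 1,
\]
where the right-hand inequality comes from the containment $A + B \subseteq W_1 \cup W_2 \cup \{0\}$. This gives $|U| \le |W_1|+|W_2|-1$, which is slightly stronger than the claim.

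The main obstacle is verifying the containment $A + B \subseteq W_1 \cup W_2 \cup \{0\}$, and in particular $c + U \subseteq W_1 \cup W_2 \cup \{0\}$: Lemma~\ref{flight} bounds ages only from \emph{above}, so a priori the first age parameter of $c + v$ could drop below $r' + 1$ due to ``wraparound'' in the fractional-part representation. The $W_1$ factor, with its extended $q$-range of size $i + j + 2$, is designed precisely to absorb elements for which exactly one wraparound occurs; to rule out more wraparounds, one chooses $c$ (and $c'$) whose support face intersects the support face of each $v \in U$ in as few coordinates as possible, using the structural room in $G$ afforded by $d \ge 2r'+r+7$. The degenerate cases in which no suitable $c$ exists because $N(G, r', l_0)$ is empty for all admissible $l_0$ would be handled by induction on $(i, j)$, or by applying the symmetry $-N(G,k,l) = N(G, d-4-l, d-4-k)$ of Remark~\ref{negative} to swap the roles of elements and their negatives and reduce to a smaller instance.
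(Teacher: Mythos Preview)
Your proposal has a genuine gap, and the paper's argument is quite different. The paper takes $A=\{0\}\cup U$ and the \emph{large} set
\[
B=\{0\}\cup\bigsqcup_{m=0}^{r'}\bigsqcup_{n=0}^{r'+i+j+2}N(G,m,n).
\]
Since $0\in A$ one has $B\subseteq A+B$, and repeated use of Lemma~\ref{flight} shows that $(A+B)\smallsetminus\{0\}$ is contained in $B\smallsetminus\{0\}$ together with $W_1\cup W_2$ (after reindexing and using $N(G,p,q)=\emptyset$ for $q<p$). Hence $|A+B|-|B|\le|W_1|+|W_2|$, while Kemperman--Scherk gives $|A+B|-|B|\ge|A|-1=|U|$. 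The point is that $|B|$ cancels on both sides; no individual element of $N(G)$ is ever singled out, and the hypothesis $d\ge 2r'+r+7$ (in fact $d\ge 2r'+r+5$ suffices here) is used only to check $A\cap(-B)=\{0\}$ via Remark~\ref{negative}.

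Your approach, by contrast, depends on choosing specific $c,c'\in N(G)$ with $u(c)=u(c')=r'+2$. Such elements need not exist: $N(G)$ can be trivial, or simply have no element of that height, and the emptiness of $N(G,r',\cdot)$ tells you nothing about $|U|$, so the proposed induction/symmetry fallback does not close the gap. Even when $c,c'$ exist, the containment $A+B\subseteq W_1\cup W_2\cup\{0\}$ fails already at $c'$ itself: an element with $u(c')=r'+2$ lies in $N(G,r',\cdot)$, hence in neither $W_1$ (first index $r'+1$) nor $W_2$ (first index $\ge r'+2$). More fundamentally, Lemma~\ref{flight} only bounds the first index of $c+v$ \emph{above} by $r'+k+2$; it can drop all the way to $0$, and your device of choosing $c$ with nearly disjoint support cannot work in general, since by Remark~\ref{helpful2} any $v\in N(G,0,0)\subseteq U$ already has all $d$ coordinates nonzero. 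The missing idea is precisely to let $B$ absorb every element with first index $\le r'$, so that wraparound is harmless.
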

\begin{proof}
If $A$ and $B$ are the subsets of $N(G)$ containing the origin and defined by
\[ A \smallsetminus \{0\} = \coprod_{k = 0}^{i} \; \coprod_{l = 0}^{i + j - k} N(G,k,l), \]
\[
B \smallsetminus \{ 0 \} = \coprod_{m = 0}^{r'} \; \coprod_{n = 0}^{r' + i + j + 2} N(G, m,n),  
\]
then repeated application of Lemma~\ref{flight} implies that 
\[
(A + B) \smallsetminus \{ 0 \} \subseteq \coprod_{p = 0}^{r' + 1} \; \coprod_{q = 0}^{r' + i + j + 2} N(G,p,q)
\cup \coprod_{p = r' + 2}^{i + r' + 2} \; \coprod_{q = 0}^{2r' + i + j + 4  - p} N(G,p,q).
\]
By Remark \ref{negative}, 
\[ -B \smallsetminus \{ 0 \} = \coprod_{m = 0}^{r'} \; \coprod_{n = 0}^{r' + i + j + 2} N(G, d - 4 - n, d - 4 - m). \]
Consider an element $v \in N(G,p,q)$. By the above comment, if $v \in -B$, then $q \ge d - 4 - r'$, and if $v \in A$, then $q \le i + j \le r + r'$. 
We conclude that 
$A \cap (-B) = \{ 0 \}$ provided that $d - 4 - r' > r + r'$. 
Since $d \ge 2r' + r + 7$, 
Theorem~\ref{useful} applies and, after simplification, gives the result. 
\end{proof}

\begin{remark}
Observe from the proof that the bound $d \ge 2r' + r + 7$ could be replaced by $d \ge 2r' + r + 5$. However, this improved bound does not lead to new inequalities. 
\end{remark}

\section{Proof of Theorem~\ref{superA}}\label{proof1}

The goal of this section is to prove Theorem~\ref{superA} and Corollary~\ref{corA}. We will break up the proof into a sequence of 
lemmas, and use the notation of the Section~\ref{decom}. 

Recall that $\mathcal{S}$ is a regular, lattice triangulation of $\partial P$ into lattice-free simplices. 
We define 
\[
a(k,l) = \sum_{F \in \mathcal{S}} h_{F}(1) | \{ v \in \BOX(F) \mid (u(v), u(-v) ) = (2 + k, d - 2 - l)\}|, 
\]
for $2 \le k \le l \le d - 2$, and set $a(k,l) = 0$ otherwise. In order to apply our results from the previous section, we use the following easy lemma. 

\begin{lemma}\label{silly}
With the notation of the previous section, 
 \[ a(k,l) =  \sum_{ \dim G = d - 1, G \in \mathcal{S}} N(G,k,l). \] 
\end{lemma}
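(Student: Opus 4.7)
The plan is to unfold both sides and match them by interchanging summation; the identity is essentially bookkeeping once the correspondences are set up. I would start from the bijection $N(G) \leftrightarrow \coprod_{F \subseteq G} \BOX(F)$ recorded in Section~\ref{add}. Under this identification the age $u$ on $N(G)$ restricts to the age function on each $\BOX(F)$, and the explicit formulas in Remark~\ref{helpful1} show that the involution $v \mapsto -v$ on $N(G)$ preserves the face $F$ that contains $v$. Consequently, for any $(d-1)$-dimensional $G \in \mathcal{S}$,
\[
|N(G,k,l)| = \sum_{F \subseteq G} \bigl|\{v \in \BOX(F) \mid (u(v), u(-v)) = (k+2,\, d-2-l)\}\bigr|,
\]
with the empty face $F = \emptyset$ contributing nothing whenever $k \ge 0$, since $u(0) = 0$.

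Next I would sum over all $(d-1)$-dimensional $G \in \mathcal{S}$ and swap the order of summation to obtain
\[
\sum_{\substack{G \in \mathcal{S} \\ \dim G = d-1}} |N(G,k,l)| = \sum_{F \in \mathcal{S}} c_F \cdot \bigl|\{v \in \BOX(F) \mid (u(v), u(-v)) = (k+2,\, d-2-l)\}\bigr|,
\]
where $c_F$ denotes the number of maximal simplices of $\mathcal{S}$ containing $F$.

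The remaining step is to identify $c_F$ with $h_F(1)$. Evaluating the defining formula for $h_F(t)$ at $t = 1$ kills every term except those in which the exponent of $(1 - t)$ vanishes, and each such surviving term contributes $1$; hence $h_F(1) = c_F$. Plugging this into the previous display recovers the definition of $a(k,l)$, completing the proof.

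I do not expect serious obstacles. The only point requiring a small check is that the bijection $N(G) \leftrightarrow \coprod_{F \subseteq G} \BOX(F)$ intertwines the group-theoretic involution $v \mapsto -v$ with the involution on each $\BOX(F)$ that sends $\sum \alpha_j (v_{i_j},1)$ to $\sum (1 - \alpha_j)(v_{i_j},1)$, so that the condition defining $N(G,k,l)$ (which involves both $u(v)$ and $u(-v)$) translates cleanly into the stated $\BOX$ condition. This is immediate from the explicit representatives in Remark~\ref{helpful1}.
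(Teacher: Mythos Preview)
Your proposal is correct and follows essentially the same route as the paper: both arguments reduce to the observation that $h_F(1)$ equals the number of maximal faces of $\mathcal{S}$ containing $F$, and then swap the order of summation. The paper states this in a single sentence, whereas you have carefully unfolded the bijection $N(G) \leftrightarrow \coprod_{F \subseteq G} \BOX(F)$ and checked compatibility with the involution $v \mapsto -v$, which is a worthwhile elaboration but not a different approach.
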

\begin{proof}
This follows from the definitions, using the observation that  $h_{F}(1)$ equals the number of maximal faces of $\mathcal{S}$  containing a face $F$. 
\end{proof} 

With this notation, the results of the previous section can be stated as follows:

\begin{lemma}\label{sum} 
With the notation above,  let $0 \le r \le r'$ and $ d \ge  2r' + r + 7$.
 If $0 \le i \le r$ and $0 \le j \le r+ r' - i$, then  
\begin{equation}\label{sum4}
\sum_{k = 0}^{i} \, \sum_{l = 0}^{i + j - k}a(k,l)   \le \sum_{q = 0}^{i + j  + 1} a(r' + 1, r' + 1 + q) + \sum_{p = 0}^{i} \; \sum_{q = 0}^{i + j - p} a(r' + 2 + p, r' + 2 + q). 
\end{equation}
\end{lemma}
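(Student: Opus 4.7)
The strategy is to reduce Lemma~\ref{sum} immediately to Lemma~\ref{keyD} via the identity provided by Lemma~\ref{silly}. First I would rewrite each coefficient appearing in the inequality using
\[
a(k,l) \;=\; \sum_{\substack{G \in \mathcal{S} \\ \dim G = d - 1}} |N(G,k,l)|,
\]
which makes both sides of the desired inequality a linear combination of the integers $|N(G,k,l)|$, with the same non-negative weights on both sides for every maximal simplex $G$ of $\mathcal{S}$. Consequently, it suffices to prove the corresponding inequality with $a(\cdot,\cdot)$ replaced by $|N(G,\cdot,\cdot)|$ for each fixed $(d-1)$-dimensional $G \in \mathcal{S}$ and then sum over $G$.

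For each such $G$, the hypotheses $0 \le r \le r'$, $d \ge 2r' + r + 7$, $0 \le i \le r$, and $0 \le j \le r + r' - i$ carried over from Lemma~\ref{sum} are precisely those of Lemma~\ref{keyD}, which yields
\[
\sum_{k = 0}^{i} \sum_{l = 0}^{i + j - k} |N(G,k,l)| \;\le\; \sum_{q = 0}^{i + j + 1} |N(G, r' + 1, r' + 1 + q)| + \sum_{p = 0}^{i} \sum_{q = 0}^{i + j - p} |N(G, r' + 2 + p, r' + 2 + q)|.
\]
Summing this inequality over all $(d-1)$-dimensional $G \in \mathcal{S}$ and reassembling via Lemma~\ref{silly} recovers exactly the claimed inequality \eqref{sum4}. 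A quick sanity check on indices: the largest second coordinate on the right is $r' + 2 + i + j \le 2r' + r + 2$, and since $d \ge 2r' + r + 7$ this is at most $d - 5$, so every term lies well within the range $2 \le k \le l \le d - 2$ where $a(k,l)$ is genuinely defined, and no boundary issues arise.

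The substantive content has already been absorbed into Lemma~\ref{keyD}, whose proof hinges on the Kemperman--Scherk theorem applied to two carefully chosen subsets of the finite abelian group $N(G)$; given that input, Lemma~\ref{sum} is a bookkeeping step and the only thing that might trip one up is confirming that the weights $h_F(1)$ hidden in the definition of $a(k,l)$ indeed correspond precisely to counting contributions from each maximal $G \supseteq F$, which is exactly the content of Lemma~\ref{silly}.
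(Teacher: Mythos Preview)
Your proposal is correct and follows exactly the paper's own proof: apply Lemma~\ref{keyD} to each maximal face $G$ of $\mathcal{S}$, then sum over all such $G$ and invoke Lemma~\ref{silly} to reassemble the $a(k,l)$. The extra index sanity check you include is harmless but not needed, since $a(k,l)$ is defined (as zero) outside its natural range anyway.
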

\begin{proof}
The result follows by summing the inequalities in Lemma~\ref{keyD} over all maximal faces $G$ of the triangulation $\mathcal{S}$, and using Lemma~\ref{silly}.
\end{proof}

We will need the following  lemma.

\begin{lemma}\label{coke}
If $( \mu_0, \ldots, \mu_r )$ and $\beta$ 
are non-negative rational numbers, then
$\sum_{i = 0}^r \mu_i h_i \ge \beta \, \sum_{i = 0}^{r} h_i$, for every symmetric, unimodal sequence 
of non-negative integers $(h_0, \ldots, h_r)$, if and only $\mu_i + \cdots + \mu_{r - i} \ge \beta \, (r - 2i + 1)$ for $0 \le i \le \lfloor \frac{r}{2} \rfloor$. 
\end{lemma}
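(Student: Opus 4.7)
The plan is to reduce the inequality $\sum \mu_i h_i \ge \beta \sum h_i$, over all symmetric unimodal non-negative integer sequences $(h_0,\ldots,h_r)$, to a finite check on the extreme rays of the cone of such sequences. The key observation is that any symmetric unimodal non-negative integer sequence admits a unique decomposition
\[
(h_0,\ldots,h_r) = \sum_{i = 0}^{\lfloor r/2 \rfloor} c_i\, \mathbf{1}_{[i,\,r-i]},
\]
where $\mathbf{1}_{[i,\,r-i]}$ denotes the symmetric indicator vector that equals $1$ on positions $i, i+1, \ldots, r-i$ and $0$ elsewhere, and where $c_i := h_i - h_{i-1} \ge 0$ (with $h_{-1} := 0$) by unimodality and symmetry. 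These indicator vectors generate the extreme rays of the cone of symmetric unimodal non-negative sequences, and they are themselves integer, so testing at them is legitimate.

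For the ``if'' direction, I will substitute the decomposition above into $\sum_j \mu_j h_j$ and $\sum_j h_j$ to obtain
\[
\sum_{j = 0}^r \mu_j h_j = \sum_{i = 0}^{\lfloor r/2 \rfloor} c_i \bigl(\mu_i + \mu_{i+1} + \cdots + \mu_{r - i}\bigr),\qquad \sum_{j = 0}^r h_j = \sum_{i = 0}^{\lfloor r/2 \rfloor} c_i (r - 2i + 1).
\]
Subtracting $\beta$ times the second identity from the first gives
\[
\sum_{j = 0}^r \mu_j h_j - \beta \sum_{j = 0}^r h_j = \sum_{i = 0}^{\lfloor r/2 \rfloor} c_i \Bigl(\bigl(\mu_i + \cdots + \mu_{r - i}\bigr) - \beta(r - 2i + 1)\Bigr),
\]
and since each $c_i \ge 0$, the stated hypothesis on the partial sums makes the right-hand side non-negative.

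For the ``only if'' direction, I will plug in the specific sequences $h = \mathbf{1}_{[i_0,\, r - i_0]}$ for each $0 \le i_0 \le \lfloor r/2 \rfloor$; these are symmetric, unimodal, and take non-negative integer values, so they are admissible. Each such choice collapses the hypothesized inequality to exactly $\mu_{i_0} + \cdots + \mu_{r - i_0} \ge \beta(r - 2i_0 + 1)$, giving the required necessary condition. No step here presents a genuine obstacle; the only care needed is to verify that the tent decomposition has non-negative integer coefficients, which follows immediately from symmetry together with unimodality of the integer sequence $(h_0,\ldots,h_r)$.
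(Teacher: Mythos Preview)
Your proof is correct and is essentially the same as the paper's: the paper tests necessity on the same indicator sequences $h_0=\cdots=h_{i-1}=0$, $h_i=\cdots=h_{\lfloor r/2\rfloor}=1$, and for sufficiency writes the identical identity $\sum_i \mu_i h_i = \sum_{i=0}^{\lfloor r/2\rfloor}(\mu_i+\cdots+\mu_{r-i})(h_i-h_{i-1})$ (your $c_i$ is their $h_i-h_{i-1}$). Your framing in terms of extreme rays of the cone of symmetric unimodal sequences is a nice conceptual gloss, but the underlying computation is the same Abel-summation step.
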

\begin{proof}
Setting $h_0 = \cdots = h_{i  - 1} = 0$ and $h_i = \cdots = h_{\lfloor \frac{r}{2} \rfloor} = 1$ shows the necessity of the conditions. Conversely, if the conditions hold and we set $h_{-1} = 0$, then
\[
\sum_{i = 0}^r \mu_i h_i = \sum_{ i = 0}^{\lfloor \frac{r}{2} \rfloor} (\mu_i + \cdots + \mu_{r - i})(h_i - h_{i - 1})
 \ge \beta \sum_{ i = 0}^{\lfloor \frac{r}{2} \rfloor}  (r - 2i + 1)(h_i - h_{i - 1}) = \beta \sum_{i = 0}^r h_i.
\]
\end{proof}


The following lemma almost completes the proof of the theorem.

\begin{lemma}\label{full4}
If $0 \le r \le r'$,  
$d \ge 2r' + r + 7$, and $\{ \lambda_{i} \mid 0 \le i \le  r + r' \}$ are non-negative rational numbers 
satisfying:
\begin{enumerate}
\item\label{numero1} $\lambda_i \ge 1$ \;  for \;  $0 \le i \le r$, 
\item\label{numero2}  $\lambda_i \ge \frac{r + 1}{2i + 1}$ \;  for \;  $r + 1 \le  i \le \lfloor \frac{r + r'}{2} \rfloor$,
\item\label{numero3} $\lambda_{p + i} + \cdots + \lambda_{q - i} \ge q - p - 2i + 1$ \; for \;  $0 \le p \le  r$, $p \le q \le 2r - p$, $0 \le  i \le \lfloor \frac{q - p}{2} \rfloor$,
\item\label{numero4} $\lambda_{p + i} + \cdots + \lambda_{q - i} \ge r - p + 1 - 2i\frac{r - p + 1}{q - p + 1}$ \, for \, 
$0 \le p \le r$, $2r - p + 1 \le q \le r + r' - p$, $0 \le i \le \lfloor \frac{q - p}{2} \rfloor$,
\end{enumerate}
then the following inequality holds:  
\begin{equation}\label{zeta4}
\lambda  a_1 +  \sum_{j = 0}^{r} a_{j + 2}  \le  a_{r' + 3} + 
\sum_{j = 0}^{r + r'} \lambda_j a_{r' + 4 + j}, 
\end{equation}
where $\lambda = \sum_{j = 0}^{r + r'} \lambda_j - r$. 
\end{lemma}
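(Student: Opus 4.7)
The plan is to expand $a(t) = h_{\mathcal{S}}(t) + \sum_{F \ne \emptyset,\, w \in \BOX(F)} t^{u(w)} h_F(t)$ via Lemma~\ref{nonlemma}, then to verify \eqref{zeta4} separately for each piece using the symmetry and unimodality of each $h_F(t)$ from Lemma~\ref{patriots}, and to control residual differences with Lemma~\ref{sum}.  A box point $w \in \BOX(F)$ of type $(k,l)$ contributes $[t^{m-k-2}] h_F(t)$ to $a_m$, with $h_F(t)$ symmetric unimodal of degree $l - k$; the empty face contributes the polynomial $h_{\mathcal{S}}(t)$, which by Lemma~\ref{patriots} has degree $d - 1 \ge 2r' + r + 6$ and is symmetric unimodal.

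First I would handle the contribution of $h_{\mathcal{S}}(t)$.  Conditions~\eqref{numero3} and~\eqref{numero4} with $p = 0$ read exactly as the hypothesis of Lemma~\ref{coke} (with $\beta = 1$ and $\beta = (r+1)/(q+1)$, respectively), so repeated application of Lemma~\ref{coke}, combined with the symmetry of $h_{\mathcal{S}}(t)$, produces the desired inequality
\[
\lambda\, h_{\mathcal{S},1} + \sum_{j = 0}^{r} h_{\mathcal{S}, j+2} \;\le\; h_{\mathcal{S}, r'+3} + \sum_{j = 0}^{r+r'} \lambda_j\, h_{\mathcal{S}, r'+4+j}.
\]
Conditions~\eqref{numero1} and~\eqref{numero2} cover the degenerate single-index cases of Lemma~\ref{coke} that are needed at the extreme end of the telescoping.

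Second, for each non-empty box-point type $(k,l)$, I would run the same Lemma~\ref{coke} argument on the symmetric unimodal polynomial $h_F(t)$ of degree $l - k$, this time applying conditions~\eqref{numero3} and~\eqref{numero4} with $p = k$.  After summing over all $(F, w)$ of type $(k, l)$, this produces the shifted form of the desired inequality modulo a residual supported on the low-index coefficients of $h_F(t)$ in the regime $k, l \le r'$.  These residuals aggregate into sums of the quantities $a(k,l)$ for $(k,l)$ in precisely the small-index region appearing on the left-hand side of Lemma~\ref{sum}, while the absorbed contributions match the $h$-polynomial pieces already processed by the empty-face step.

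The last step is to invoke Lemma~\ref{sum} to rewrite the aggregate residuals as sums of $a(k',l')$ with $k' \ge r' + 1$, which in turn correspond exactly to the extra contributions to $a_{r'+3}$ and $\sum_j \lambda_j\, a_{r'+4+j}$ on the right-hand side of \eqref{zeta4}.  The principal obstacle will be the bookkeeping in this final combination: one must check that the residuals produced by Lemma~\ref{coke} at every type $(k,l)$ sum to something matching the precise shape of the left-hand side of Lemma~\ref{sum}, and that the rebalanced terms land on the correct high-index contributions without sign errors or leftover positive terms.  The hypothesis $d \ge 2r' + r + 7$ plays a double role here: it is the range hypothesis required by Lemma~\ref{sum}, and it also guarantees that every relevant $h_F(t)$ has enough coefficients for the symmetry reflections underlying the Lemma~\ref{coke} applications.
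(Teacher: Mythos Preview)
Your outline is essentially the paper's argument: expand $a(t)$ via Lemma~\ref{nonlemma}, treat the empty face by unimodality, isolate a residual $T$ from the low-index box points, apply Lemma~\ref{sum}, and then absorb the result into unused right-hand contributions.  The ingredients are right, but the placement of Lemma~\ref{coke} and of conditions \eqref{numero3}--\eqref{numero4} is inverted relative to what actually works.

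In the paper, Lemma~\ref{coke} is \emph{not} used to manufacture the residuals.  For a box point of type $(k,l)$ with $k\le r$, one compares the left contribution $\sum_{i=0}^{r-k}h_i$ directly with the right-hand chunk $\sum_{i=r'-k+1}^{r'-k+1+(r-k)}h_i$ by plain unimodality; this succeeds whenever $l\ge r+r'-k+1$, and otherwise the left contribution (bounded by $\frac{r-k+1}{l-k+1}h_F(1)$ when $l+k>2r$) feeds into $T$.  No conditions on the $\lambda_i$ are needed for this step.  The empty face is likewise handled by unimodality together with condition~\eqref{numero1} alone, not by Lemma~\ref{coke} with \eqref{numero3}--\eqref{numero4}.

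Where Lemma~\ref{coke} actually enters is \emph{after} Lemma~\ref{sum}: the output $S$ is a weighted sum of the quantities $a(r'+2+p,r'+2+q)$ and $a(r'+1,r'+1+q)$, and one must show that each such term is dominated by the contribution to the right-hand side of \eqref{zeta4} coming from box points of that same high type.  That contribution is $\sum_i \lambda_{p+i}h_i$ (or the analogous expression with a leading $1$), and Lemma~\ref{coke} with $\beta=1$ or $\beta=\frac{r-p+1}{q-p+1}$ is precisely what compares it to $h_F(1)$ or $\frac{r-p+1}{q-p+1}h_F(1)$.  This is the step where conditions \eqref{numero1}--\eqref{numero4} are all genuinely consumed, and it is exactly what your last paragraph calls ``correspond exactly'': it is not an identity but a second round of Lemma~\ref{coke} estimates, one case for each of the four ranges of $(p,q)$.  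Your caveat about bookkeeping is well placed; just be aware that the bookkeeping \emph{is} a second invocation of Lemma~\ref{coke}, not merely index matching.
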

\begin{proof}
Recall from Lemma~\ref{nonlemma} that 
\begin{equation}\label{temp}
 a(t) = \sum_{F \in \mathcal{S}} \sum_{w \in \BOX(F)} t^{u(w)} h_{F}(t). 
 \end{equation}
It follows from the unimodality of the coefficients of $h_{\mathcal{S}}(t)$, the bound $d \ge  2r' + r + 7$, and condition \eqref{numero1}, that the coefficients of $h_{\mathcal{S}}(t)$ satisfy the desired inequality.  
Hence
it will be enough to consider the 
contributions of non-empty faces $F$ in \eqref{temp}. 

Fix a non-empty face $F$ of $\mathcal{S}$ 
and a lattice point $v \in \BOX(F)$ satisfying  $(u(v), u(-v)) = (2 + k, d - 2  - l)$. If we write  $h_{F}(t) = \sum_{i = 0}^{l - k}h_{i}t^{i}$ then  the contribution of the coefficients of  $t^{u(v)}h_{F}(t)$ to the left hand side of
\eqref{zeta4} equals $\sum_{i = 0}^{r - k } h_{i}$, and  the contribution to the right hand side is at least $\sum_{i = r' - k + 1}^{r' - k + 1 + (r - k)}h_{i}$, and we conclude that the contribution to the left hand side is at most the contribution to the right hand side provided $(l - k) - (r - k ) \ge r' - k  + 1$ or $k > r$. 
Moreover, if $l + k > 2r$, then 
$r - k < \frac{l - k}{2}$ and the unimodality of $h_{F}(t)$ implies that the contribution of the coefficients of $t^{u(v)}h_{F}(t)$ to the left hand side is at most $\frac{r - k + 1}{l - k + 1}h_{F}(1)$.
We conclude that it remains to bound the sum
\[
T := \sum_{k = 0}^r \lbrack \sum_{l = 0}^{2r - k} a(k,l) + \sum_{l = 2r - k + 1}^{r + r' - k} \frac{r - k + 1}{l - k + 1} a(k,l)   
\rbrack.
\]
In order to apply the inequalities of Lemma~\ref{sum} to $T$, we first describe a change of co-ordinates. Consider a vector space with basis $\{ x_{k,l} \mid 0 \le k \le r, 0 \le k + l \le r + r' \}$ and consider the basis $\{ z_{i,j} \mid 0 \le i \le r, i + j \le r + r' \}$, determined by $z_{i,j} = \sum_{k = 0}^{i} \sum_{l = 0}^{i + j - k} x_{k,l}$. One verifies that $x_{k,l} = z_{k,l} - z_{k,l - 1}  - z_{k - 1,l + 1} + z_{k - 1,l}$, 
where we set $z_{k,l} = 0$ unless  $0 \le k \le r$ and $k + l \le r + r'$. Hence, if we set $x_{k,l} = a(k,l)$, then $z_{i,j} = \sum_{k = 0}^{i} \sum_{l = 0}^{i + j - k} a(k,l)$, and we can write 
\[ 
T=  \sum_{i = 0}^{r } \sum_{j = 0}^{r + r' - i}  \alpha_{i,j}  \sum_{k = 0}^{i} \sum_{l = 0}^{i + j - k} a(k,l), 
\]
 for some coefficients $\alpha_{i,j}$. Lemma~\ref{sum} now applies and gives
 \[
 T \le  \sum_{i = 0}^{r } \sum_{j = 0}^{r + r' - i}  \alpha_{i,j} \; [ \sum_{q = 0}^{i  + j + 1} a(r' +1, r' + 1 + q) + 
 \sum_{p = 0}^{i} \; \sum_{q = 0}^{i + j - p} a(r' + 2 + p, r' + 2 + q) ].
 \]
We now change co-ordinates again, setting 
\[ z_{i,j} =  \sum_{q = 0}^{i  + j + 1} a(r' +1, r' + 1 + q) + 
\sum_{p = 0}^{i} \; \sum_{q = 0}^{i + j - p} a(r' + 2 + p, r' + 2 + q). \]
A quick calculation shows that 
\[ x_{k,l} = a(r' + 2 + k, r' + 2 + l),  k \ne 0 \]
\[ x_{0,l} = a(r' + 2, r' + 2 + l) + a(r' + 1, r' + 2 + l), l \ne 0 \]
\[ x_{0,0} = a(r' + 2, r' + 2) + a(r' + 1, r' + 2) + a(r' + 1,r' + 1), \]
so that our inequality becomes
\[
T \le   S:= \sum_{p = 0}^r \lbrack \sum_{q = 0}^{2r  - p} a(r' + 2 + p,r' + 2 + q) + \sum_{q = 2r - p + 1}^{r + r' - p} \frac{r - p + 1}{q - p + 1} a(r' + 2 + p,r' + 2 + q)   \rbrack
\]
\[
 + \sum_{q = 0}^{2r + 1} a(r' + 1,r' + 1 + q) + \sum_{q = 2r + 2}^{r  + r' + 1} \frac{ r + 1}{q} a(r' + 1, r' + 1 + q).
\]

It remains to establish new contributions to the right hand of \eqref{zeta4} whose sum is at least the right hand side $S$ of the above inequality.  
Consider a lattice point  $v \in \BOX(F)$, for some non-empty face $F$ in $\mathcal{S}$, 
 and  write $h_{F}(t) = \sum_{i = 0}^{q - p} h_i$. 
We will compare the corresponding contributions to $S$ and the right hand side of \eqref{zeta4}.

Firstly, suppose that $(u(v), u(-v)) = ((r' + 2 + p) + 2, d - 2 -(r' + 2 + q))$, for some $0 \le p \le r$ and $p \le q \le 2r - p$. On the one hand, by the definition of $a(r' + 2 + p, r' + 2 + q)$, $v$ corresponds to a contribution of $\sum_{i = 0}^{q - p} h_{i}$ to $S$.  On the other hand, the polynomial $t^{u(v)}h_F(t)$ in \eqref{temp} contributes $\sum_{i = 0}^{q - p} \lambda_{p + i} h_i$ to the right hand side of \eqref{zeta4}.  Since $h_F(t)$ has symmetric, unimodal coefficients by Lemma~\ref{patriots}, and the coefficients $\{ \lambda_i \}$ satisfy condition \eqref{numero3}, applying Lemma~\ref{coke} with $\beta = 1$ 
gives $\sum_{i = 0}^{q - p} \lambda_{p + i} h_i \ge \sum_{i = 0}^{q - p} h_{i}$, as desired.

Secondly, suppose that $(u(v), u(-v)) = ((r' + 2 + p) + 2, d - 2 -(r' + 2 + q))$, for some $0 \le p \le r$ and $2r - p + 1 \le q \le r + r' - p$. With the notation above, the corresponding contribution to $S$ is $\frac{r - p + 1}{q - p + 1}  \sum_{i = 0}^{q - p} h_{i}$, while the corresponding contribution to the right hand side of \eqref{zeta4} is $\sum_{i = 0}^{q - p} \lambda_{p + i} h_i$. As before, since the coefficients of $h_F(t)$ are symmetric and unimodal and the coefficients $\{ \lambda_i \}$ satisfy condition \eqref{numero3},
applying Lemma~\ref{coke} with $\beta = \frac{r - p + 1}{q - p + 1}$ implies that  
$\sum_{i = 0}^{q - p} \lambda_{p + i} h_i \ge \frac{r - p + 1}{q - p + 1}  \sum_{i = 0}^{q - p} h_{i}$, as 
desired.

Next suppose that $(u(v), u(-v)) = ((r' + 1) + 2, d - 2 -(r' + 1 + q))$, for some $0 \le q \le 2r + 1$. 
As above, the corresponding contribution to $S$ is  $\sum_{i = 0}^{q} h_i$ and the contribution to the right hand side of \eqref{zeta4} is $h_0 + \sum_{i = 0}^{q - 1} \lambda_i h_{i + 1}$.  If $q = 0$, then both contributions equal $h_0$, so we may assume that $q \ge 1$. Putting $p = 0$ and replacing $q$ with $q - 1$ in condition \eqref{numero3}, gives 
\begin{equation*}
\lambda_{i} + \cdots + \lambda_{q  - 1 - i} \ge q - 2i,
\end{equation*}
 for $1 \le q \le 2r + 1$ and $0 \le i \le \lfloor \frac{q - 1}{2} \rfloor$. Putting $i = 0$ implies that 
 $1 + \lambda_{0} + \cdots + \lambda_{q - 1} \ge q + 1$ and, for $1 \le i \le  \lfloor \frac{q - 1}{2} \rfloor$, using condition \eqref{numero1}, we obtain 
\[ \lambda_{i - 1} + \cdots + \lambda_{q - 1 - i} \ge \lambda_{i - 1} + q - 2i \ge q  - 2i + 1. \]
We conclude that the vector $(1, \lambda_0, \ldots, \lambda_{q - 1})$ satisfies the assumptions of Lemma~\ref{coke} with $\beta = 1$, and hence Lemma \ref{coke} implies that  $h_0 + \sum_{i = 0}^{q - 1} \lambda_i h_{i + 1} \geq \sum_{i = 0}^{q} h_i$, as desired. 

Finally, if $(u(v), u(-v)) = ((r' + 2) + 1, d - 2 -(r' + 1 + q))$, for some $2r + 2 \le q \le r + r' + 1$, then the contribution to $S$ is   $\frac{r + 1}{q} \sum_{i = 0}^{q} h_i$ and the contribution to the left hand side of 
\eqref{zeta4} is $h_0 + \sum_{i = 0}^{q - 1} \lambda_i h_{i + 1}$. Putting $p = 0$ and replacing $q$ with $q - 1$ in condition \eqref{numero4}, gives
\[
\lambda_{i} + \cdots + \lambda_{q - 1 - i} \ge (q - 2i) \, \frac{r + 1}{q}, 
\]
for $2r + 2 \le q \le r + r' + 1$ and $0 \le i \le \lfloor \frac{q - 1}{2} \rfloor$. Setting $i = 0$, we have
$1 + \lambda_{0} + \cdots + \lambda_{q - 1} \ge r + 2 \ge (q + 1) \frac{r + 1}{q}$, and, for $1 \le i \le  r + 1$, using condition \eqref{numero1} and the fact that $q \ge r + 1$, we obtain
\[ \lambda_{i - 1} + \cdots + \lambda_{q - 1 - i} \ge \lambda_{i - 1} + (q - 2i) \, \frac{r + 1}{q} \ge 1 + (q - 2i) \,\frac{r + 1}{q} \ge (q  - 2i + 1)\frac{r + 1}{q}. \] 
For $r + 2 \le i \le \lfloor \frac{q - 1}{2} \rfloor$, using condition \eqref{numero2} and the fact that $2i - 1 \le q$,  we obtain
\[ \lambda_{i - 1} + \cdots + \lambda_{q - 1 - i} \ge \lambda_{i - 1} + (q - 2i) \, \frac{r + 1}{q} \ge \frac{r + 1}{2i - 1} + (q - 2i) \, \frac{r + 1}{q} \ge (q  - 2i + 1)\frac{r + 1}{q}. \]
 We conclude that the vector $(1, \lambda_0, \ldots, \lambda_{q - 1})$ satisfies the assumptions of Lemma~\ref{coke} with $\beta = \frac{r + 1}{q}$, and hence Lemma \ref{coke} implies that  $h_0 + \sum_{i = 0}^{q - 1} \lambda_i h_{i + 1} \geq \frac{r + 1}{q} \sum_{i = 0}^{q} h_i$, as desired. 

\end{proof}

The following technical lemma, combined with the previous lemma, completes the proof of Theorem~\ref{superA}.

\begin{lemma}\label{hayden}
If $0 \le r \le r'$, and $\{ \lambda_{i} \mid 0 \le i \le  r + r' \}$ are non-negative rational numbers satisfying
\begin{enumerate}
\item\label{bat1} $\lambda_i \ge 1$ \;  for \;  $0 \le i \le r$, 
\item\label{bat2}  $\lambda_i \ge \frac{r + 1}{2i + 1}$ \;  for \;  $r + 1 \le  i \le \lfloor \frac{r + r'}{2} \rfloor$,
\item\label{bat3} $\lambda_{i} + \cdots + \lambda_{2r - i} \ge 2r - 2i + 1$ \; for \;  
$0 \le  i \le r - 1$,
\item\label{bat4} $\lambda_{i} + \cdots + \lambda_{r + r' - i} \ge r  + 1 - 2i\frac{r + 1}{r + r' + 1}$ \, for \, 
$r  + 1 \le i < \frac{r + r'}{2}$, 
\end{enumerate}
then 
\begin{itemize}
\item $\lambda_{p + i} + \cdots + \lambda_{q - i} \ge q - p - 2i + 1$ \; for \;  $0 \le p \le r$, $p \le q \le 2r - p$, $0 \le  i \le \lfloor \frac{q - p}{2} \rfloor$,
\item $\lambda_{p + i} + \cdots + \lambda_{q - i} \ge r - p + 1 - 2i\frac{r - p + 1}{q - p + 1}$ \, for \, 
$0 \le p \le r$, $2r - p + 1 \le q \le r + r' - p$, $0 \le i \le \lfloor \frac{q - p}{2} \rfloor$.
\end{itemize}
\end{lemma}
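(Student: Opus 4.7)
Set $a := p + i$ and $b := q - i$ throughout. The strategy for both target families is to split the sum $\sum_{j=a}^{b}\lambda_j$ into sub-intervals and apply hypotheses (1)--(4) on each piece. For the first family, the constraint $q \leq 2r - p$ yields $a + b \leq 2r$. If $b \leq r$, then termwise application of (1) gives $\sum \geq b - a + 1 = q - p - 2i + 1$. Otherwise $b \geq r + 1$, forcing $a \leq 2r - b \leq r - 1$; I split $[a,b] = [a,\, 2r-b-1] \sqcup [2r - b,\, b]$, apply (1) termwise on the first piece (which lies in $[0, r]$) to contribute $2r - b - a$, and apply (3) with $i' = 2r - b \in [0, r-1]$ on the second piece to contribute $2b - 2r + 1$. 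The two contributions sum to $b - a + 1$, as required.

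For the second family, I first reduce to the case $p = 0$. Rewriting the RHS as $f(p) := (r - p + 1)(b - a + 1)/(a + b - 2p + 1)$ with $a, b$ held fixed, a direct computation gives
\[
\frac{\partial f}{\partial p} \;=\; \frac{(b - a + 1)(2r + 1 - a - b)}{(a + b - 2p + 1)^2},
\]
which is non-positive since $a + b = p + q \geq 2r + 1$. Hence $f$ is non-increasing in $p$ on the admissible range $[0, \min(r, a)]$, so it suffices to establish
\[
\sum_{j=a}^{b} \lambda_j \;\geq\; \frac{(r+1)(b-a+1)}{a+b+1}
\]
for every pair $(a, b)$ with $a \leq b$ and $2r + 1 \leq a + b \leq r + r'$.

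I prove this $p = 0$ inequality by case analysis on $(a, b)$. If $a = 0$, then $\sum_{j=0}^{b}\lambda_j \geq \sum_{j=0}^{r}\lambda_j \geq r + 1$ by (1), which matches the target $(r+1)(b+1)/(b+1) = r + 1$. If $a \geq 1$, I split $[a, b]$ at the two thresholds $r$ and $\lfloor (r + r')/2 \rfloor$: apply (1) termwise on the portion in $[0, r]$, apply (2) termwise on the portion in $[r+1, \lfloor (r+r')/2 \rfloor]$, and invoke (4) at $i' = r + r' - b$ (when valid, i.e.\ when $(r+r')/2 < b \leq r' - 1$) to cover any portion beyond $\lfloor (r+r')/2 \rfloor$, discarding extraneous non-negative $\lambda_j$'s as needed. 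The main obstacle is the final arithmetic step: verifying that the assembled lower bound meets the rational target $(r+1)(b-a+1)/(a+b+1)$. This is most delicate when $b$ lies close to $\lfloor (r+r')/2 \rfloor$, where neither the termwise bound from (2) nor the aggregate bound from (4) alone is tight, and a careful combination of the two is required.
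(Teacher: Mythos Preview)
Your argument for the first family is correct and complete; it is a direct splitting argument rather than the paper's induction on $p$ and then on $q$, but it reaches the same conclusion cleanly.

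For the second family, your monotonicity reduction to $p=0$ is valid and is essentially a continuous repackaging of the paper's discrete induction on $p$: the paper passes from $(p,q,i)$ to $(p-1,q+1,i+1)$, which fixes $a=p+i$ and $b=q-i$, and then verifies algebraically that the right-hand side does not decrease. Your derivative computation is the same check in disguise.

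The genuine gap is the $p=0$ case. You yourself acknowledge that the ``final arithmetic step'' is unproven, and indeed your sketched decomposition does not cover all cases. For instance, when $b \ge r'$ (which can occur once $r \ge 1$, e.g.\ $r=2$, $r'=4$, $a=1$, $b=4$), condition~(4) at $i'=r+r'-b$ is unavailable since $i' \le r$, so your sketch supplies no bound on the portion of $[a,b]$ beyond $\lfloor (r+r')/2 \rfloor$; here one actually needs~(3), which your outline for the second family does not mention. More seriously, even where the split is well defined, you have not shown that the assembled lower bound meets the rational target, and you explicitly say so.

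The paper avoids this difficulty by a different mechanism: rather than splitting $[a,b]$ statically, it inducts. Fixing $b$ and increasing $a$ by one (equivalently, passing from $(q,i)$ to $(q+1,i+1)$), it combines the inductive bound on $\sum_{j=a+1}^{b}\lambda_j$ with $\lambda_a \ge 1$ or $\lambda_a \ge (r+1)/(2a+1)$ from (1) or (2). Each step reduces to a single elementary inequality such as $(q-2i)(q-2i-1)\ge 0$ or $(q-2r)(q+1)+2i(r+1)\ge 0$. The induction terminates either at $a=b$ (handled by~(2)) or at $a+b=r+r'$ (handled directly by~(4) for $a \ge r+1$, and by a short further downward induction on $a$ for $a \le r$, using $\lambda_a \ge 1$ and the constraint $r' \ge r+1$). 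This sidesteps the ``delicate combination'' you flag. To complete your proof you should either carry out this induction or supply the missing arithmetic verification for your splitting in all sub-cases; as it stands, the second family is not established.
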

\begin{proof}
Let us first show that 
\begin{equation}\label{langer} \lambda_{p + i} + \cdots + \lambda_{q - i} \ge q - p - 2i + 1, \end{equation}  for $0 \le p \le r$, $p \le q \le 2r - p$ and $0 \le  i \le \lfloor \frac{q - p}{2} \rfloor$. 
If $p > 0$, then, by induction, we may assume that the result holds for $p - 1$. Replacing $p$ with $p  - 1$, $q$ by $q + 1$ and $i$ by $i + 1$ in \eqref{langer} gives 
$\lambda_{p + i} + \cdots + \lambda_{q - i} \ge q - p - 2i + 1,$
for  $q \le 2r - p$ and $0 \le  i \le \lfloor \frac{q - p}{2} \rfloor$, as desired.  

Hence it remains to consider the case when $p = 0$. That is, we need
to show that 
\begin{equation}\label{zooter}
\lambda_{i} + \cdots + \lambda_{q - i} \ge q - 2i + 1,
\end{equation}
 for $0 \le q \le 2r$ and $0 \le  i \le \lfloor \frac{q}{2} \rfloor$. Note that, if $q - i \le r$, then condition \eqref{bat1} implies that the inequality holds. Hence it remains to verify the inequality when $0 \le i \le q - r - 1$. If $q = 2r$, then this is exactly condition \eqref{bat3}. 
If $q < 2r$, then, by induction, we may assume that the result holds for $q + 1$. Replacing $q$ with $q + 1$ and $i$ with $i + 1$ in \eqref{zooter} gives
$\lambda_{i + 1} + \cdots + \lambda_{q +  i} \ge q - 2i$ for $0 \le  i \le q - r  - 1 < r - 1$. By condition \eqref{bat1}, $\lambda_i \ge 1$ and hence $\lambda_{i} + \cdots + \lambda_{q +  i} \ge q - 2i + 1$.

Let us now turn our attention to the second inequality 
\begin{equation}\label{boon}
\lambda_{p + i} + \cdots + \lambda_{q - i} \ge r - p + 1 - 2i\frac{r - p + 1}{q - p + 1},
\end{equation}
 for 
$0 \le p \le r$, $2r - p + 1 \le q \le r + r' - p$ and $0 \le i \le \lfloor \frac{q - p}{2} \rfloor$. 
 If $p > 0$, then, by induction, we may assume that the result holds for $p - 1$. Replacing $p$ with $p  - 1$, $q$ by $q + 1$ and $i$ by $i + 1$ in \eqref{boon} gives 
 \[
 \lambda_{p + i} + \cdots + \lambda_{q - i} \ge r - p + 2 - 2(i + 2)\frac{r - p + 2}{q - p + 3},
 \]
 for $2r - p + 1 \le q \le r + r' - p$ and $0 \le i \le \lfloor \frac{q - p}{2} \rfloor$. We need to show that
 \[ r - p + 2 - 2(i + 2)\frac{r - p + 2}{q - p + 3} \ge  r - p + 1 - 2i\frac{r - p + 1}{q - p + 1}. \]
 A quick calculation verifies that this is equivalent to 
\[
q^2 - 2q(r + i) + (p + 2i - 1)(2r - p  + 1) = (q - (p + 2i - 1))(q - (2r - p  + 1)) \ge 0. 
\]
Since $i \le  \lfloor \frac{q - p}{2} \rfloor$ implies that $q \ge p + 2i - 1$, and $q \ge 2r - p + 1$ by assumption, we conclude that the inequality above holds. 

It remains to consider the case when $p = 0$. That is, we need
to show that 
\begin{equation}\label{flipper}
\lambda_{i} + \cdots + \lambda_{q - i} \ge r + 1 - 2i\frac{r + 1}{q + 1}, 
\end{equation}
for $2r + 1 \le q \le r + r'$ and $0 \le i \le \lfloor \frac{q}{2} \rfloor$. 
First observe that if $i = \frac{q}{2}$, then 
$\lambda_{\frac{q}{2}} \ge r + 1 - q\frac{r + 1}{q + 1} = \frac{r + 1}{q + 1}$ by condition \eqref{bat2}, as desired. If $q < r + r'$, then, by induction, we may assume the result holds for $q +1$. Replacing $q$ with $q + 1$ and $i$ with $i + 1$ in \eqref{flipper} gives
\begin{equation}\label{armball}
\lambda_{i + 1} + \cdots + \lambda_{q - i} \ge r + 1 - 2(i + 1)\frac{r + 1}{q + 2}, 
\end{equation}
for $0 \le  i \le \frac{q - 1}{2}$. If $r + 1 \le i \le \frac{q - 1}{2}$, then condition \eqref{bat2} and \eqref{armball} imply that 
\[
\lambda_i + \lambda_{i + 1} + \cdots + \lambda_{q - i} \ge \frac{r + 1}{2i + 1} +  r + 1 - 2(i + 1)\frac{r + 1}{q + 2},
\]
and we are left with verifying that 
\[
\frac{r + 1}{2i + 1} +  r + 1 - 2(i + 1)\frac{r + 1}{q + 2} \ge r + 1 - 2i\frac{r + 1}{q + 1}. 
\]
A quick calculation shows that this is equivalent to $(q - 2i)(q - (2i + 1)) \ge 0$, which holds since $i \le \frac{q - 1}{2}$. If $0 \le i \le r$, then condition \eqref{bat1} and \eqref{armball} imply that
\[
\lambda_i + \lambda_{i + 1} + \cdots + \lambda_{q - i} \ge 1 +  r + 1 - 2(i + 1)\frac{r + 1}{q + 2},
\]
and we are left with showing that 
\[
r + 2 - 2(i + 1)\frac{r + 1}{q + 2} \ge  r + 1 - 2i\frac{r + 1}{q + 1}. 
\]
A quick calculation verifies that this is equivalent to $(q - 2r)(q + 1) + 2i(r + 1) \ge 0$, which holds since $q > 2r$. 

We conclude that it remains to consider the case when $p = 0$ and $q = r + r'$. That is, we are left with verifying that  
\begin{equation}\label{flipper2}
\lambda_{i} + \cdots + \lambda_{r + r' - i} \ge r + 1 - 2i\frac{r + 1}{r + r' + 1}, 
\end{equation}
for $0 \le i \le \frac{r + r'}{2}$. We have already verified the case when $i = \frac{r + r'}{2}$, and, if $r + 1 \le i < \frac{r + r'}{2}$, then this is precisely condition \eqref{bat4}.
If $0 \le i \le r$, then by induction we may assume the inequality holds for $i + 1$, and, using   condition \eqref{bat1}, we get 
\[
\lambda_{i} + \cdots + \lambda_{r + r' - i} \ge \lambda_i + r + 1 - 2(i + 1)\frac{r + 1}{r + r' + 1} \ge (1 - \frac{2(r + 1)}{r + r' + 1} ) +  r + 1 - 2i\frac{r + 1}{r + r' + 1}. 
\]
The fact that $2r + 1 \le q = r + r'$ implies that $1 - \frac{2(r + 1)}{r + r' + 1} \ge 0$, completing the proof. 
\end{proof}


Corollary~\ref{corA} follows from Theorem~\ref{superA} and the following lemma. 

\begin{lemma}\label{plan}
With the notation of Theorem~\ref{superA}, for $0 \le r \le r'$ and $m = \max( 2r, \lfloor  \frac{r  + r'}{2} \rfloor)$, 
if we set $\lambda_i = 1$ for $0 \le i \le m$ and $\lambda_i = 0$ for $m + 1 \le i \le r + r'$, then 
$(\lambda_0, \ldots, \lambda_{r + r'})$ lies in the rational polyhedron $Q(r,r')$. 
\end{lemma}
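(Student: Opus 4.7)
The plan is to verify the four defining inequalities \eqref{cat1}--\eqref{cat4} of $Q(r,r')$ one at a time for the proposed vector, exploiting the fact that $\lambda_i = 1$ when $i \le m$ and $\lambda_i = 0$ otherwise. Since $m \ge 2r \ge r$, the index range $\{0,1,\ldots,r\}$ lies inside $\{0,1,\ldots,m\}$, giving \eqref{cat1} immediately, and the range $\{i,i+1,\ldots,2r-i\}$ in \eqref{cat3} also lies inside $\{0,\ldots,m\}$, so the sum there equals the number of summands $2r-2i+1$ and condition \eqref{cat3} holds with equality. For \eqref{cat2}, the hypothesis $r+1 \le i \le \lfloor(r+r')/2\rfloor \le m$ gives $\lambda_i = 1$, and the required bound $1 \ge (r+1)/(2i+1)$ follows from $2i+1 \ge 2r+3 > r+1$.

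The substantive case is \eqref{cat4}. For $r+1 \le i < (r+r')/2$, I would split on whether $r+r'-i \le m$ or not. In the first subcase every $\lambda_j$ in the sum equals $1$, so the sum is $r+r'-2i+1$; after clearing the denominator $r+r'+1$ the required inequality collapses to $r'(r+r'+1-2i) \ge 0$, which follows from $r' \ge 0$ and $i < (r+r')/2 \le (r+r'+1)/2$.

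The main technical step is the second subcase $r+r'-i > m$, where the sum equals only $m-i+1$ (using $i \le \lfloor(r+r')/2\rfloor \le m$). The task reduces, after rearrangement, to proving
\[
m \ge r + i \cdot \frac{r'-r-1}{r+r'+1}.
\]
When $r' \le r+1$ the right-hand side is at most $r \le m$ and the inequality is trivial. When $r' \ge r+2$ the factor $r'-r-1$ is positive, so I plan to use the bound $m \ge \lfloor(r+r')/2\rfloor \ge (r+r'-1)/2$ and argue that it suffices to verify $(r+r'-1)/2 \ge r + i(r'-r-1)/(r+r'+1)$; cancelling the common factor $r'-r-1$ reduces this to $i \le (r+r'+1)/2$, which is a direct consequence of the hypothesis $i < (r+r')/2$. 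This completes the verification of all four conditions and hence places $(\lambda_0,\ldots,\lambda_{r+r'})$ in $Q(r,r')$. The only place the precise definition of $m$ really bites is in the last subcase, where the slack $m \ge \lfloor(r+r')/2\rfloor$ is exactly what is needed.
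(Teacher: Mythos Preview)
Your proof is correct and follows essentially the same route as the paper: conditions \eqref{cat1}--\eqref{cat3} are immediate, and for \eqref{cat4} both arguments reduce to the algebraic inequality $(r'-r-1)i \le (m-r)(r+r'+1)$, verified via $2i \le r+r'+1$ and $r'-r-1 \le 2(m-r)$. You are in fact slightly more careful than the paper: you separate the subcase $r+r'-i \le m$ (where the sum equals $r+r'-2i+1$ rather than $m-i+1$), which can actually occur (e.g.\ $r=3$, $r'=6$, $i=4$) and which the paper silently folds into the other case.
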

\begin{proof}
One checks immediately that $(\lambda_0, \ldots, \lambda_{r + r'})$ satisfies \eqref{cat1}, \eqref{cat2} and \eqref{cat3}. 
To verify \eqref{cat4}, we need to show that if $r  + 1 \le i < \frac{r + r'}{2}$, then $m - i + 1 \ge r + 1  - 2i\frac{r + 1}{r + r' + 1}$. A quick calculation shows that this holds if and only if $(r' - r - 1)i \le (m - r)(r + r' + 1)$, which follows since $2i \le r + r' + 1$ and $0 \le r' - r - 1 \le 2(m - r)$. 
\end{proof}




\section{A decomposition of the $h^*$-polynomial}\label{decomposition}

For the remainder of the paper we will assume that $P$ contains an interior lattice point. 
The goal of this section is to give an explicit description of the decomposition \eqref{niceshirt}, in this case. 
Theorem~\ref{main} provides a self-contained summary of the results of this section and will provide the framework for our work in the rest of the paper. 
 
Since $P$ contains an interior lattice point, the degree $s$  of $P$ equals $d$ and the codegree $l$ of $P$ equals $1$. Hence, Theorem~\ref{previous} implies that  $h^*(t)$ has a unique decomposition 
\[
h^*(t) = a(t) + tb(t),
\]
where $a(t) = t^d a(t^{-1})$ and $b(t) = t^{d - 1}b(t^{-1})$. Moreover, the coefficients $\{ a_i \}$ of $a(t)$ satisfy
$1 = a_0 \le a_1 \le a_i$ for $2 \le i \le d - 1$, and the coefficients $\{ b_i \}$ of $b(t)$ are non-negative.  
In this case, this decomposition of the $h^*$-polynomial is originally due to Betke and McMullen, who showed that $a(t)$ has positive integer coefficients and $b(t)$ has non-negative coefficients (Theorem 5 in \cite{BMLattice}).    
The goal of this section is to give an explicit description of the polynomials $a(t)$ and $b(t)$, and deduce Theorem~\ref{refinement} as a consequence. 

\begin{remark}
From a geometric perspective, Betke and McMullen's result follows from  Poincar\'e duality on the orbifold cohomology ring of a toric stack associated to $P$ (Remark 3.5 in \cite{YoWeightI}). 
\end{remark}

In order to describe the polynomials  $a(t)$ and $b(t)$, we first recall some facts about $h^*$-polynomials and triangulations and refer the reader to 
\cite{PayEhrhart} for more details (c.f. Section~\ref{decom}). 
Let $C$ be the cone over $P \times \{ 1 \}$ in $N_{\R} \times \R$ and let $u: N \times \R \rightarrow \R$ denote projection onto the second co-ordinate. Fix a regular, lattice  triangulation $\mathcal{T}$ of $P$ and if $F$ is a non-empty face of $\mathcal{T}$ 
with vertices $v_{1}, \ldots, v_{k}$,  then
let \[ \BOX(F) = \{  v \in N \times \Z \mid   v = \sum_{i = 1}^{k} \alpha_{i} (v_{i}, 1), \; 0 < \alpha_{i} < 1 \}, 
\] and 
let \[ B_{F}(t) = \sum_{w \in \BOX(F)} t^{u(w)}.\] 
We regard the empty face as a face of dimension $-1$ and 
set $\BOX(\emptyset) = \{ 0 \}$, so that $B_{\emptyset}(t) = 1$.

\begin{lemma}\label{sym2}
If $F$ is a face of $\mathcal{T}$, then $B_{F}(t) = t^{\dim F + 1} B_{F}(t^{-1})$. 
\end{lemma}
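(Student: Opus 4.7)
The plan is to exhibit an involution on $\BOX(F)$ that sends the $u$-value $k$ to $(\dim F + 1) - k$, and then the symmetry of $B_F(t)$ will follow immediately.

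First I would handle the empty face separately: if $F = \emptyset$, then $B_F(t) = 1$ and $\dim F + 1 = 0$, so the claim reads $1 = 1$.

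For the main case, let $F$ be a non-empty face with vertices $v_1,\ldots,v_k$, so $\dim F = k-1$. The plan is to define
\[
\iota : \BOX(F) \to \BOX(F), \qquad \iota(w) = \Bigl(\sum_{i=1}^k v_i,\; k\Bigr) - w.
\]
I need to check three things about $\iota$: (i) $\iota(w) \in N \times \Z$, which is immediate since $w$ and $\sum_i(v_i,1)$ are both integral; (ii) if $w = \sum_i \alpha_i (v_i,1)$ with $0 < \alpha_i < 1$, then $\iota(w) = \sum_i (1-\alpha_i)(v_i,1)$ with $0 < 1-\alpha_i < 1$, so $\iota(w) \in \BOX(F)$; (iii) $\iota$ is clearly an involution.

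Finally, since $u(\iota(w)) = \sum_i (1-\alpha_i) = k - u(w) = (\dim F + 1) - u(w)$, reindexing the sum via $\iota$ gives
\[
B_F(t^{-1}) = \sum_{w \in \BOX(F)} t^{-u(w)} = \sum_{w \in \BOX(F)} t^{-(\dim F + 1 - u(\iota(w)))} = t^{-(\dim F + 1)} B_F(t),
\]
which is the desired identity. There is no real obstacle here; the only minor point requiring care is verifying that the coefficient vector $(1-\alpha_1,\ldots,1-\alpha_k)$ still has all entries strictly between $0$ and $1$, which is automatic from the strict inequalities in the definition of $\BOX(F)$.
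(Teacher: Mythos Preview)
Your proof is correct and follows essentially the same approach as the paper: both handle the empty face trivially and then use the involution $\iota$ sending $\sum_i \alpha_i(v_i,1)$ to $\sum_i (1-\alpha_i)(v_i,1)$, together with the relation $u(w)+u(\iota(w))=\dim F+1$, to obtain the symmetry. Your explicit description of $\iota$ as subtraction from $(\sum_i v_i,\,k)$ is just a rewriting of the same map.
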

\begin{proof}
If $F$ is the empty face of $\mathcal{T}$, then the assertion follows since $B_{F}(t) = 1$ and $\dim F  = -1$ by definition. 
If $F$ is a non-empty face of $\mathcal{T}$ with vertices $v_{1}, \ldots, v_{k}$, then $k = \dim F + 1$ and  there is a natural involution $\iota$ on $\BOX(F)$ which sends $\sum_{i = 1}^{k} \alpha_{i} (v_{i}, 1)$ to $\sum_{i = 1}^{k} (1 - \alpha_{i}) (v_{i}, 1)$. Observe that $u(w) + u( \iota(w) ) = \dim F + 1$, for every $w \in \BOX(F)$. 
We compute 
\[
t^{\dim F + 1} B_{F}(t^{-1}) = t^{\dim F + 1} \sum_{w \in \BOX(F)} t^{-u(w)} =
\sum_{w \in \BOX(F)} t^{-u(\iota(w))} = B_{F}(t). 
\]
\end{proof}

Recall that the \emph{$h$-polynomial} of a face $F$ of $\mathcal{T}$ is defined by
\begin{equation*} \label{change}
h_{F}(t) = \sum_{F \subseteq G} t^{\dim G - \dim F} (1 - t)^{d - \dim G}.
\end{equation*}
As in Section~\ref{decom},
the following well-known lemma follows from Poincar\'e duality and the Hard Lefschetz theorem for projective toric varieties (cf.\@  Lemma 2.9 in \cite {YoInequalities}). 

\begin{lemma}\label{sym}
If $\mathcal{T}$ is a regular, lattice triangulation of $P$ and $F$ is a non-empty face of $\mathcal{T}$, then $h_{F}(t)$ is a polynomial with symmetric, unimodal, positive integer coefficients. The degree of $h_F(t)$ is equal to $d - 1 - \dim F$ if $F$ is contained in the boundary of $P$, and is equal to  $d - \dim F$ otherwise. 


\end{lemma}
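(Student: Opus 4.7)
The plan is to realise $h_F(t)$ as the cohomological Poincar\'e polynomial of a projective simplicial toric variety $X_F$ associated to the star of $F$ in $\mathcal{T}$, and then to read off symmetry, unimodality and positivity from Poincar\'e duality, the Hard Lefschetz theorem for projective simplicial toric varieties, and the fact that Betti numbers are non-negative integers with a one-dimensional top cohomology.

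First, the substitution $G' = G \smallsetminus F$ in the defining sum shows that
\[
h_F(t) = \sum_{G' \in \operatorname{lk}_{\mathcal{T}}(F) \cup \{\emptyset\}} t^{\dim G' + 1}(1-t)^{(d - \dim F - 1) - \dim G'},
\]
so $h_F(t)$ is the standard $h$-polynomial of the link of $F$ in $\mathcal{T}$, a pure simplicial complex of dimension $d - \dim F - 1$ --- a PL sphere when $F$ lies in the interior of $P$, and a PL ball when $F \subseteq \partial P$, with boundary given by the link of $F$ in the induced triangulation of $\partial P$.

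Second, I use regularity of $\mathcal{T}$ to produce $X_F$. For $F$ interior, the quotient $\pi\colon N_{\R} \to N_{\R}/\operatorname{span}(F - v_0)$ for any vertex $v_0 \in F$ carries the cones $\operatorname{cone}(G - v_0)$ for $G \supseteq F$ to a complete simplicial fan $\Sigma_F$ in $\R^{d - \dim F}$; a strictly convex piecewise-linear support function for $\mathcal{T}$, restricted to the star of $F$ and pushed forward through $\pi$, certifies regularity of $\Sigma_F$, and $X_F$ is the resulting projective simplicial toric variety of complex dimension $d - \dim F$. For $F$ on the boundary, the same projection yields only a fan supported on a proper convex cone; I complete it to a projective fan by attaching cones built from the induced triangulation of $\partial P$ along an auxiliary outward ray, and take $X_F$ to be the closure of the torus orbit associated to this outward ray in the completed projective simplicial toric variety, of complex dimension $d - \dim F - 1$. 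In both cases the standard dictionary between $h$-polynomials of complete regular simplicial fans and Betti numbers of their toric varieties (cf.~\cite{FulIntroduction}, Ch.~5) gives
\[
h_F(t) = \sum_{i \ge 0} \dim_{\Q} H^{2i}(X_F; \Q) \, t^i,
\]
after which Poincar\'e duality yields the symmetry $h_F(t) = t^{\dim_{\C} X_F} h_F(t^{-1})$, the Hard Lefschetz theorem yields unimodality, and positivity of Betti numbers gives positive integer coefficients, with $\dim H^{2 \dim_{\C} X_F}(X_F; \Q) = 1$ fixing the precise degree.

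The principal obstacle will be the boundary case: the naive local fan at $F$ is not complete, and realising the completion in a way that reproduces $h_F(t)$ exactly as a Poincar\'e polynomial of an orbit closure --- rather than with extra terms coming from the auxiliary outward cones --- requires carefully matching torus-invariant subvarieties of $X_F$ with cones of $\mathcal{T}$ through $F$. This bookkeeping is essentially the content of the proof of Lemma~2.9 in \cite{YoInequalities}, which I would follow in detail; by contrast, the interior case is essentially immediate once $\Sigma_F$ is set up, since no completion is needed.
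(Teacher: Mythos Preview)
Your strategy --- identify $h_F(t)$ with the $h$-polynomial of the link of $F$, realise it as the Poincar\'e polynomial of a projective simplicial toric variety, and read off symmetry, unimodality and positivity from Poincar\'e duality and Hard Lefschetz --- is exactly the paper's approach; the paper's own proof is the one-line citation of those two theorems together with Lemma~2.9 of \cite{YoInequalities}. Your link identification and your treatment of the interior case are correct and standard.

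The sketched boundary construction, however, does not give what you claim. The orbit closure attached to the outward ray $\rho$ in the completed fan $\hat\Sigma_F$ is the toric variety of $\operatorname{Star}_{\hat\Sigma_F}(\rho)/\rho$, whose cones correspond precisely to the faces $G$ with $F\subseteq G\subseteq\partial P$; its Poincar\'e polynomial is therefore $h_F^{\partial}(t)$, the $h$-polynomial of the link of $F$ in the induced \emph{boundary} triangulation, not $h_F(t)$ itself. These two links --- a sphere and a ball --- have different $h$-polynomials in general: for the trivial triangulation of a $d$-simplex and $F$ a vertex one computes $h_F(t)=1$, whereas the boundary link is a $(d-2)$-sphere with $h$-polynomial $1+t+\cdots+t^{d-1}$. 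The equality $h_F(t)=h_F^{\partial}(t)$ does hold when every non-boundary face of $\mathcal{T}$ through $F$ is the join of a boundary face through $F$ with a single interior vertex (as in the cone triangulation $\mathcal{T}(1)$ used to build Hibi's $\mathcal{T}'$), but not for an arbitrary regular lattice triangulation. So your $X_F$ certifies the desired properties for $h_F^{\partial}$ rather than for $h_F$; to recover $h_F(t)$ from your completed fan you would have to compare the cohomology of the ambient $\hat X_F$ with that of the divisor, which is a different and more delicate argument than the direct dictionary you invoke.
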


The following theorem of Betke and McMullen expresses the $h^*$-polynomial as a sum of `shifted' $h$-polynomials. 

\begin{theorem}\label{BM}\cite[Theorem 1]{BMLattice}\label{bandm}
With the notation above, 
\[
h^*(t) = \sum_{F \in \mathcal{T}} B_{F}(t) h_{F}(t). 
\]
\end{theorem}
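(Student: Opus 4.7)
The plan is to compute the Ehrhart series
\[
\sum_{m \geq 0} f_P(m) t^m = \sum_{w \in C \cap (N \times \Z)} t^{u(w)}
\]
by decomposing $C$ according to $\mathcal{T}$ and recognizing the answer as $(1-t)^{-d-1} \sum_{F \in \mathcal{T}} B_F(t) h_F(t)$.

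First, since $\mathcal{T}$ is a triangulation of $P$, the cones $C_F$ over $F \times \{1\}$ in $N_\R \times \R$, for $F \in \mathcal{T}$ (with $C_\emptyset = \{0\}$), form a rational simplicial fan whose support is $C$. Thus every lattice point of $C$ lies in the relative interior of a unique $C_F$, and the Ehrhart series splits as $\sum_{F \in \mathcal{T}} G_F(t)$, where $G_F(t)$ is the generating function for lattice points of $\mathrm{relint}(C_F)$ weighted by $t^{u(w)}$.

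Next, I would apply the standard half-open decomposition of a simplicial cone. If $F$ has vertices $v_1,\dots,v_k$, then each lattice point of $C_F$ has a unique representation $w_0 + \sum_{i=1}^k n_i(v_i,1)$ with $n_i \in \Z_{\geq 0}$ and $w_0 = \sum_i \beta_i(v_i,1)$, $0 \leq \beta_i < 1$; the point lies in $\mathrm{relint}(C_F)$ iff $n_i \geq 1$ whenever $\beta_i = 0$. Since $w_0 \in \BOX(F')$ precisely when $F'$ is the face of $F$ with vertex set $\{v_i : \beta_i > 0\}$, summing over the fractional part yields
\[
G_F(t) = \sum_{F' \subseteq F} B_{F'}(t) \left(\tfrac{1}{1-t}\right)^{\dim F' + 1} \left(\tfrac{t}{1-t}\right)^{\dim F - \dim F'}.
\]

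Summing over $F$, swapping the order to fix $F'$ first and then let $F$ range over $F \supseteq F'$, and multiplying by $(1-t)^{d+1}$, I obtain
\[
h^*(t) = \sum_{F' \in \mathcal{T}} B_{F'}(t) \sum_{F \supseteq F'} t^{\dim F - \dim F'}(1-t)^{d - \dim F} = \sum_{F' \in \mathcal{T}} B_{F'}(t) h_{F'}(t),
\]
using the defining formula for $h_{F'}(t)$. The main technical hurdle is justifying the half-open parametrization: verifying that $(F', w_0, (n_i)) \mapsto w_0 + \sum_i n_i(v_i,1)$ is a bijection onto the lattice points of $\mathrm{relint}(C_F)$, where $F' \subseteq F$, $w_0 \in \BOX(F')$, and $n_i \geq 1$ for $v_i \in F \setminus F'$ (with $n_i \geq 0$ otherwise). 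Once this bookkeeping is in place---including the degenerate case $F = \emptyset$ with $B_\emptyset(t) = 1$ correctly contributing $1$---the rest reduces to routine generating function manipulation.
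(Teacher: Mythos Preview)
Your argument is correct and is essentially the standard proof of Betke and McMullen's formula: partition the cone $C$ into the relative interiors of the simplicial cones $C_F$, use the fundamental-parallelepiped decomposition to write each $G_F(t)$ as a sum over $\BOX(F')$ for $F'\subseteq F$, and then swap the order of summation to produce the local $h$-polynomials $h_{F'}(t)$. The bijection you describe for lattice points in $\mathrm{relint}(C_F)$ is exactly right, and the bookkeeping for the empty face is handled correctly by the conventions $\dim\emptyset=-1$ and $B_\emptyset(t)=1$.

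Note, however, that the paper does not give its own proof of this statement: it is quoted as a result of Betke and McMullen \cite[Theorem~1]{BMLattice} and used as a black box. So there is no ``paper's proof'' to compare against. Your write-up is a faithful reconstruction of the original argument (and of the combinatorial reproofs that appear, for instance, in Payne's work \cite{PayEhrhart} that the paper cites for background).
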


If we set \begin{equation*}
a'(t) = \sum_{F \in \mathcal{T}, \emptyset \neq F \subseteq \partial P} B_{F}(t) h_{F}(t), \: \: \: \:
b'(t) =  t^{-1} \cdot \sum_{F \in \mathcal{T}, \emptyset \neq F \nsubseteq \partial P} B_{F}(t) h_{F}(t), \end{equation*}
then Theorem~\ref{BM} implies that 
$h^*(t) = h_{\mathcal{T}}(t) + a'(t) + t b'(t)$. 
It follows from Lemma~\ref{sym}, and the fact that $t$ divides $B_{F}(t)$ unless $F$ is the empty face, that $a'(t)$ and $b'(t)$ are polynomials of degree less than or equal to $d - 1$. 

\begin{lemma}\label{deco}
With the notation above, we have a decomposition 
$h^*(t) = h_{\mathcal{T}}(t) + a'(t) + t b'(t)$, where $a'(t) = t^{d}a'(t^{-1})$ and $b'(t) = t^{d - 1}b'(t^{-1})$. 
\end{lemma}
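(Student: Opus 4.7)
The plan is to verify both reciprocities summand by summand, using Lemma~\ref{sym2} for the box polynomials $B_F(t)$ and Lemma~\ref{sym} for the $h$-polynomials $h_F(t)$. For every nonempty face $F$ of $\mathcal{T}$, Lemma~\ref{sym2} gives $B_F(t) = t^{\dim F + 1} B_F(t^{-1})$, while Lemma~\ref{sym} (combined with the stated symmetry of $h_F$) gives $h_F(t) = t^{d-1-\dim F} h_F(t^{-1})$ when $F \subseteq \partial P$ and $h_F(t) = t^{d - \dim F} h_F(t^{-1})$ when $F \nsubseteq \partial P$. Multiplying the two identities yields the product reciprocity
\[
B_F(t) h_F(t) = t^{e_F} B_F(t^{-1}) h_F(t^{-1}),
\]
where $e_F = d$ in the boundary case and $e_F = d+1$ in the interior case.

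Summing the boundary identity over all nonempty $F \subseteq \partial P$, the common factor $t^d$ comes out of the sum and gives $a'(t) = t^d a'(t^{-1})$ immediately. For the interior contribution, write $Q(t) = \sum_{\emptyset \ne F \nsubseteq \partial P} B_F(t) h_F(t)$, so that $Q(t) = t^{d+1} Q(t^{-1})$ and $b'(t) = t^{-1} Q(t)$. Substituting,
\[
b'(t^{-1}) = t \cdot Q(t^{-1}) = t \cdot t^{-(d+1)} Q(t) = t^{-d} Q(t) = t^{-(d-1)} b'(t),
\]
which rearranges to $b'(t) = t^{d-1} b'(t^{-1})$.

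The one small point that needs attention is that $b'(t)$, defined by formal division by $t$, really is a polynomial, so that the reciprocity above is an identity of polynomials and not merely of Laurent polynomials. This follows because any $w \in \BOX(F)$ for a nonempty face $F$ has the form $w = \sum \alpha_i (v_i, 1)$ with $0 < \alpha_i < 1$, and since $w \in N \times \Z$, the integer $u(w) = \sum \alpha_i$ is strictly positive, hence at least $1$. Therefore $t \mid B_F(t)$ for every nonempty $F$, so division by $t$ in the definition of $b'$ is harmless. The degree bounds $\deg a', \deg b' \le d-1$ then follow from Lemma~\ref{sym} together with the observation that the top degree of $B_F(t) h_F(t)$ is $(\dim F + 1) + (d - 1 - \dim F) = d$ (boundary case) or $(\dim F + 1) + (d - \dim F) = d+1$ (interior case, which becomes $d$ after dividing by $t$). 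I do not expect any substantial obstacle: once the two reciprocity inputs are in place, the lemma is pure bookkeeping.
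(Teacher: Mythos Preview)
Your argument is correct and is essentially the paper's own proof: apply Lemma~\ref{sym2} to $B_F$ and Lemma~\ref{sym} to $h_F$ term by term, then sum. One minor slip in your closing aside (which is not needed for the lemma): the top degree of $B_F(t)$ is at most $\dim F$, not $\dim F + 1$, since $u(w)=\sum_i \alpha_i$ is an integer strictly less than $\dim F + 1$; with this correction your degree counts give $d-1$ and $d$ (hence $\deg a',\deg b'\le d-1$) rather than $d$ and $d+1$.
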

\begin{proof}
We have seen in the previous discussion that $h^*(t) = h_{\mathcal{T}}(t) + a'(t) + t b'(t)$.
By Lemma~\ref{sym2} and Lemma~\ref{sym}, 
\[
 t^{d}a'(t^{-1}) = \sum_{F \in \mathcal{T}, \emptyset \neq F \subseteq \partial P} t^{\dim F + 1}B_{F}(t^{-1}) t^{d - \dim F -1}h_{F}(t^{-1}) = a'(t),
\]
\[
t^{d - 1} b'(t^{-1}) = t^{-1} \cdot \sum_{F \in \mathcal{T}, \emptyset \neq F \nsubseteq \partial P} t^{\dim F + 1}B_{F}(t^{-1}) t^{d - \dim F }h_{F}(t^{-1}) = b'(t).
\]
\end{proof}

We now consider a special triangulation $\mathcal{T}'$ of $P$ due to Hibi \cite{HibLower}. Let $\{ v_{1}, \ldots, v_{l} \}$
denote the interior lattice points of $P$ and fix a lattice  triangulation $\mathcal{B}$ of the boundary of $P$ with vertex set $\partial P \cap N$. 
We will define our triangulation $\mathcal{T}'$ inductively. 
Firstly, let  $\mathcal{T}(1)$ denote the lattice triangulation of $P$ with maximal simplices given by the convex hulls of the maximal faces of $\mathcal{B}$ and $v_{1}$, and observe that 
$h_{\mathcal{T}(1)} = h_{\mathcal{B}}(t)$. 
If $\Sigma(1)$ denotes the fan refinement of $C$ with cones given by the cones  over the faces of  $\mathcal{T}(1)$, then 
let $\Sigma(j)$ be the fan obtained by applying 
successive star subdivisions to the rays through $\{ (v_{1},1), \ldots, (v_{j}, 1) \}$, for $2 \leq j \leq l$. 
Observing that $\Sigma(j)$ is the fan over a regular, lattice triangulation $\mathcal{T}(j)$ of $P$,  
we set $\mathcal{T}' = \mathcal{T}(l)$ to be our distinguished triangulation with 
vertex set $P \cap N$.  

\begin{remark}
Geometrically, the cone $C$ over $P \times \{ 1 \}$ corresponds to Gorenstein toric singularity $U$ and the fan refinement $\Sigma(l)$ of $C$ 
corresponds to a simplicial, quasi-projective toric variety $X$ with terminal singularities and a crepant, projective birational morphism  $X \rightarrow U$. 
\end{remark}

We will need the following lemma due to Hibi and sketch the proof for the convenience of the reader. 

\begin{lemma}\cite{HibLower}\label{final}
There exists a regular, lattice triangulation $\mathcal{T}'$ of $P$ satisfying the following properties:
\begin{enumerate}
\item $\mathcal{T}'$ has vertex set $P \cap N$. 
\item $h_{\mathcal{T}'}(t) = \tilde{a}(t) + t \tilde{b}(t)$, where 
$\tilde{a}(t) = t^{d} \tilde{a}(t^{-1})$ has degree $d$ and unimodal, positive integer coefficients, and 
$\tilde{b}(t) = t^{d - 1} \tilde{b}(t^{-1})$ is either identically zero or has degree $d - 1$ and unimodal, positive integer coefficients. 
\end{enumerate}
\end{lemma}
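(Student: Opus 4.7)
Property (1) is immediate from the construction: starting from the triangulation $\mathcal{B}$ of $\partial P$ with vertex set $\partial P \cap N$, each of the successive star subdivisions along the ray through $(v_j, 1)$ inserts the interior lattice point $v_j$ as a new vertex of the triangulation of $P$. Thus $\mathcal{T}' = \mathcal{T}(l)$ has vertex set $P \cap N$, and regularity is preserved throughout.

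For property (2), I would argue by induction on $j$ that $h_{\mathcal{T}(j)}(t)$ admits the stated decomposition. For the base case $j = 1$, the complex $\mathcal{T}(1)$ is the simplicial join of $\mathcal{B}$ with the interior vertex $v_1$, so that $h_{\mathcal{T}(1)}(t) = h_\mathcal{B}(t)$, where $h_\mathcal{B}(t)$ denotes the $h$-polynomial of $\mathcal{B}$ viewed as a pure $(d-1)$-dimensional simplicial complex. Since $\mathcal{B}$ is a regular lattice triangulation of the simplicial $(d-1)$-sphere $\partial P$, the polynomial $h_\mathcal{B}(t)$ has degree $d$, with symmetric integer coefficients (by Dehn--Sommerville), which are positive and unimodal by Hard Lefschetz applied to the projective simplicial toric variety determined by $\mathcal{B}$. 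Setting $\tilde{a}_1(t) = h_\mathcal{B}(t)$ and $\tilde{b}_1(t) = 0$ establishes the base case.

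For the inductive step, let $F_j$ denote the unique face of $\mathcal{T}(j-1)$ whose relative interior contains $v_j$; since $v_j$ is interior to $P$ and is a new vertex, $F_j$ is an interior face with $\dim F_j \geq 1$. By Lemma~\ref{sym}, $h_{F_j}(t)$ is a polynomial of degree $d - \dim F_j$ with symmetric, unimodal, positive integer coefficients. A direct comparison of removed and new simplices in the star subdivision (using that the link of the interior face $F_j$ is a $(d - \dim F_j - 1)$-sphere) gives
\[
h_{\mathcal{T}(j)}(t) - h_{\mathcal{T}(j-1)}(t) = (t + t^2 + \cdots + t^{\dim F_j})\, h_{F_j}(t) = t \cdot p_j(t),
\]
where $p_j(t) = (1 + t + \cdots + t^{\dim F_j - 1})\, h_{F_j}(t)$ has degree $d - 1$, is palindromic about $(d-1)/2$, and has positive integer coefficients. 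Unimodality of $p_j(t)$ follows from Hard Lefschetz applied to the exceptional fiber of the weighted toric blow-up $X_{\Sigma(j)} \to X_{\Sigma(j-1)}$ along the orbit corresponding to the cone over $F_j$. Setting $\tilde{a}_j = \tilde{a}_{j-1}$ and $\tilde{b}_j = \tilde{b}_{j-1} + p_j$ preserves the required properties.

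The main obstacle is to establish the explicit star-subdivision formula above and to verify unimodality and positivity of each contribution $p_j(t)$; both rely on $F_j$ being an interior face (so that its link is a sphere rather than a ball) together with Lemma~\ref{sym}. Iterating from $j = 2$ to $l$ produces $\tilde{a}(t) = h_\mathcal{B}(t)$ and $\tilde{b}(t) = \sum_{j=2}^{l} p_j(t)$: the latter is identically zero when $l = 1$, and otherwise is a sum of palindromic, unimodal, positive integer polynomials of degree $d - 1$ sharing the common symmetry axis $(d-1)/2$, so it has the stated form.
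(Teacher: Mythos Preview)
Your proof is correct and follows the same inductive scheme as the paper's (and Hibi's) sketch: show that each star subdivision increments $h_{\mathcal{T}(j)}(t)$ by $t$ times a degree-$(d-1)$ polynomial with symmetric, unimodal, positive integer coefficients, so that $\tilde{a}(t)=h_{\mathcal{B}}(t)$ and $\tilde{b}(t)$ is the sum of these increments.

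The one difference is bookkeeping. The paper records the increment as $t\cdot h_{F(j)}(t)$, where $F(j)$ is any edge through $v_j$ in the \emph{new} triangulation $\mathcal{T}(j)$; unimodality and positivity then come for free from Lemma~\ref{sym}. You instead compute the increment from the \emph{old} face $F_j\in\mathcal{T}(j-1)$ containing $v_j$, obtaining $p_j(t)=(1+t+\cdots+t^{\dim F_j-1})\,h_{F_j}(t)$. These are literally equal: the link of an edge $\{v_j,w\}$ (with $w$ a vertex of $F_j$) in $\mathcal{T}(j)$ is $\partial(F_j\smallsetminus\{w\})*\mathrm{lk}_{\mathcal{T}(j-1)}(F_j)$, whose $h$-polynomial is exactly your product. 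Consequently your appeal to Hard Lefschetz on the exceptional fiber, while valid, is unnecessary---unimodality of $p_j(t)$ already follows from Lemma~\ref{sym} applied to the edge $F(j)$, or from the elementary fact that multiplying a symmetric unimodal polynomial with non-negative coefficients by $1+t+\cdots+t^m$ preserves symmetry and unimodality.
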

\begin{proof} (Sketch)
With the notation of the previous discussion,  for $2 \leq j \leq l$, fix a $1$-dimensional face $F(j)$ of $\mathcal{T}(j)$ containing $v_{j}$ as a vertex, with corresponding $h$-vector 
$h_{F(j)}(t)$. If $\mathcal{B}$ is the restriction of $\mathcal{T}'$ to the boundary of $P$ then 
Hibi shows by induction on $j$ that $h_{\mathcal{T}'}(t) = h_{\mathcal{B}}(t) + t \cdot \sum_{j = 2}^{l} h_{F(j)}(t)$.
The result then follows from Lemma~\ref{sym}. 
\end{proof}




Recall, that a $d$-dimensional lattice polytope $G$ in $N$ is called a \emph{lattice-free simplex} if $G$ contains exactly $d + 1$ lattice points (necessarily its vertices). 
With this notation, we have proved the following refined version of Theorem~\ref{previous},
in the case when $P$ contains an interior lattice point. 

\begin{theorem}\label{main}
If $P$ is  a $d$-dimensional lattice polytope containing an interior lattice point, then there exist unique polynomials $a(t) = \sum_{i = 0}^{d} a_{i}t^{i}$ and $b(t) = \sum_{i = 0}^{d - 1}b_{i}t^{i}$ such that:
\begin{enumerate}

\item[] $a(t) = t^{d}a(t^{-1})$ has degree $d$

\item[]  $b(t) = t^{d - 1}b(t^{-1})$ has degree at most $d - 1$ 

\item[] $h^*(t) = a(t) + tb(t)$. 
\end{enumerate}
 
Moreover, there exists a regular, lattice triangulation  $\mathcal{T}'$ of $P$ 
such that each maximal face of   $\mathcal{T}'$ is a lattice-free simplex and 
$h_{\mathcal{T}'}(t) = \tilde{a}(t) + t \tilde{b}(t)$, where 
$\tilde{a}(t) = t^{d} \tilde{a}(t^{-1})$ has degree $d$ and unimodal, positive integer coefficients, and 
$\tilde{b}(t) = t^{d - 1} \tilde{b}(t^{-1})$ is either identically zero or has degree $d - 1$ and unimodal, positive integer coefficients. 

Let $u: N \times \Z \rightarrow \Z$ denote projection onto the second co-ordinate.  If $F$ is a non-empty face of $\mathcal{T}'$ with vertices $v_1, \ldots, v_r$, then
 set $\BOX(F) = \{ v \in  N \times \Z \mid v = \sum_{i = 1}^{k} \alpha_{i} (v_{i}, 1),  \; 0 < \alpha_{i} < 1 \}$
and
$B_{F}(t) = \sum_{w \in \BOX(F)} t^{u(w)}$.
The $h$-vector $h_{F}(t)$ is a polynomial with symmetric, unimodal, positive integer coefficients. The degree of $h_F(t)$ is equal to $d - 1 - \dim F$ if $F$ is contained in the boundary of $P$, and is equal to  $d - \dim F$ otherwise. 
If 
\begin{equation*}
a'(t) = \sum_{F \in \mathcal{T}', \emptyset \neq F \subseteq \partial P} B_{F}(t) h_{F}(t), \: \: \: \:
b'(t) =  t^{-1} \cdot \sum_{F \in \mathcal{T}', \emptyset \neq F \nsubseteq \partial P} B_{F}(t) h_{F}(t), \end{equation*}
then $t^2$ divides both $a'(t)$ and $tb'(t)$, and 
\[ a(t) = \tilde{a}(t) + a'(t), \: \: \:
 b(t) = \tilde{b}(t) + b'(t). \]






\end{theorem}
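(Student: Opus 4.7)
The plan is to assemble the pieces already developed in the section. Existence and uniqueness of $a(t)$ and $b(t)$ with the stated palindromic and degree conditions follow directly from Theorem~\ref{previous} applied to the case $s=d$, $l=1$: the identity $(1+t+\cdots+t^{l-1})h^*(t) = a(t) + t^l b(t)$ is the unique decomposition of the left-hand side into polynomials of the prescribed palindromic degrees. So the first task is only to identify the specific $a(t)$ and $b(t)$ produced geometrically.

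Let $\mathcal{T}'$ be Hibi's triangulation from Lemma~\ref{final}, obtained by starting with a boundary lattice triangulation $\mathcal{B}$ with vertex set $\partial P \cap N$ and performing successive star subdivisions at the interior lattice points $v_1,\ldots,v_l$. By construction, the vertex set of $\mathcal{T}'$ is $P \cap N$. The first point requiring verification is that every maximal face of $\mathcal{T}'$ is a lattice-free simplex. This holds because $\mathcal{T}'$ is a simplicial complex all of whose vertices are in $P \cap N$, so any lattice point lying in a simplex of $\mathcal{T}'$ must already be one of its vertices — otherwise a vertex of $\mathcal{T}'$ would sit strictly inside another face, violating the simplicial structure.

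Now I apply Lemma~\ref{deco} to $\mathcal{T}'$ to obtain
\[
h^*(t) = h_{\mathcal{T}'}(t) + a'(t) + t b'(t),
\]
where $a'(t) = t^d a'(t^{-1})$ and $b'(t) = t^{d-1} b'(t^{-1})$ have degree at most $d$ and $d-1$ respectively. Lemma~\ref{final} gives $h_{\mathcal{T}'}(t) = \tilde{a}(t) + t\tilde{b}(t)$ with $\tilde{a}(t), \tilde{b}(t)$ palindromic of the prescribed degrees and with unimodal positive integer coefficients. Combining,
\[
h^*(t) = \bigl(\tilde{a}(t) + a'(t)\bigr) + t\bigl(\tilde{b}(t) + b'(t)\bigr),
\]
and the uniqueness recorded above forces $a(t) = \tilde{a}(t) + a'(t)$ and $b(t) = \tilde{b}(t) + b'(t)$. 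The stated properties of $h_F(t)$ are exactly the content of Lemma~\ref{sym}.

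The remaining claim is that $t^2$ divides both $a'(t)$ and $t b'(t)$, and this is where lattice-freeness does the real work. Every face of a lattice-free simplex is itself lattice-free: any lattice point in a face is a lattice point of the ambient simplex, hence one of its vertices, and affine independence forces it to be a vertex of the face as well. Consequently every nonempty face $F$ of $\mathcal{T}'$ is lattice-free. Now if $w = \sum_{i=1}^{k} \alpha_i (v_i, 1) \in \BOX(F)$ with $0 < \alpha_i < 1$, then $u(w) = \sum \alpha_i$ is a positive integer, and $u(w) = 1$ would place a lattice point in the relative interior of $F$, contradicting lattice-freeness. Therefore $u(w) \ge 2$ for all such $w$, so $t^2 \mid B_F(t)$ for every nonempty $F$, yielding $t^2 \mid a'(t)$ and $t^2 \mid t b'(t)$. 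The only nontrivial conceptual step is the lattice-freeness propagation argument and its use to force $u(w) \ge 2$; everything else is bookkeeping that matches the previously proven lemmas against the statement of the theorem.
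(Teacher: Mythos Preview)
Your proof is correct and follows essentially the same approach as the paper: assemble Lemmas~\ref{sym}, \ref{deco}, and \ref{final}, use uniqueness of the palindromic decomposition to identify $a(t)=\tilde a(t)+a'(t)$ and $b(t)=\tilde b(t)+b'(t)$, and then verify $t^2\mid a'(t)$ and $t^2\mid tb'(t)$ via $u(w)\ge 2$. Your write-up is slightly more explicit than the paper's --- you spell out why the maximal faces of $\mathcal{T}'$ are lattice-free and why $u(w)=1$ is impossible --- but the argument is the same.
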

\begin{proof}
We have previously proved every statement above except the claim that $t^2$ divides $a'(t)$ and $tb'(t)$. This follows from  the observation that since $\mathcal{T}'$ has vertex set $P \cap N$, $u(w) \geq 2$ for every 
$w \in \BOX(F)$ and every non-empty face $F$ of $\mathcal{T}'$. 
\end{proof}

We conclude by giving the proof of Theorem~\ref{refinement}. We recall the statement for the convenience of the reader. 

\begin{theorem}
With the notation of Theorem~\ref{main},  
the coefficients of $a(t)$ and $b(t)$ satisfy:  
\[1 = a_{0} \leq a_{1} \leq a_{i} \textrm{ for } 2 \leq i \leq d - 1,\]
\[0 \leq b_{0} \leq b_{i}  \textrm{ for } 1 \leq i \leq d - 2.\] 
Equivalently, the coefficients of the $h^*$-polynomial of $P$ satisfy:
\begin{equation*}
1  = h^*_{0} \leq h^*_{d} \leq h^*_{1} 
\end{equation*}
\begin{equation*}
h^*_{2} + \cdots
+ h^*_{i + 1} \geq h^*_{d - 1} + \cdots + h^*_{d - i} \geq h^*_{1} + \cdots + h^*_{i}. 
\end{equation*}
for $i = 1, \ldots, \lfloor \frac{d - 1}{2} \rfloor$.
\end{theorem}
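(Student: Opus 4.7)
The plan is to read off both sets of inequalities directly from the decomposition furnished by Theorem~\ref{main}, which writes $a(t) = \tilde{a}(t) + a'(t)$ and $b(t) = \tilde{b}(t) + b'(t)$, combined with the structural properties of each summand.

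First I would record the three elementary facts that make the reduction work. Since $h_{\mathcal{T}'}(t) = \tilde{a}(t) + t\tilde{b}(t)$ has constant term $1$, we have $\tilde{a}_0 = 1$. Since every $w \in \BOX(F)$ satisfies $u(w) \ge 2$ for any non-empty face $F$ of $\mathcal{T}'$ (its vertices lie in $N$), both $a'(t)$ and $tb'(t)$ are divisible by $t^2$, giving $a'_0 = a'_1 = 0$ and $b'_0 = 0$. Finally, the coefficients of $a'(t)$ and $b'(t)$ are non-negative, because each is a sum of products $B_F(t)h_F(t)$ of polynomials with non-negative coefficients. Combining these three observations with the additive decomposition yields $a_0 = 1$, $a_1 = \tilde{a}_1$, and $b_0 = \tilde{b}_0 \ge 0$.

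Next I would invoke the symmetric unimodal structure of $\tilde{a}(t)$ and $\tilde{b}(t)$ guaranteed by Theorem~\ref{main}: $\tilde{a}(t)$ is symmetric and unimodal of degree $d$ with positive integer coefficients and $\tilde{a}_0 = 1$, while $\tilde{b}(t)$ is either identically zero or symmetric and unimodal of degree $d-1$ with positive integer coefficients. Symmetry plus unimodality force $1 = \tilde{a}_0 \le \tilde{a}_1 \le \tilde{a}_i$ for $1 \le i \le d-1$ and $\tilde{b}_0 \le \tilde{b}_i$ for $0 \le i \le d-1$ (vacuous if $\tilde{b}\equiv 0$). Adding these bounds to $a'_i, b'_i \ge 0$ gives
\[
a_1 = \tilde{a}_1 \ge 1 = a_0, \qquad a_i = \tilde{a}_i + a'_i \ge \tilde{a}_1 = a_1 \text{ for } 2 \le i \le d-1,
\]
\[
b_0 = \tilde{b}_0 \ge 0, \qquad b_i = \tilde{b}_i + b'_i \ge \tilde{b}_0 = b_0 \text{ for } 1 \le i \le d-2,
\]
which is the first assertion of the theorem.

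Finally I would translate these into the equivalent $h^*$-statements by unwinding the defining formulas
\[
a_{i+1} = \sum_{j=0}^{i+1} h^*_j - \sum_{j=0}^{i} h^*_{d-j}, \qquad b_i = -\sum_{j=0}^{i} h^*_j + \sum_{j=0}^{i} h^*_{d-j}.
\]
Then $a_0 \le a_1$ becomes $h^*_d \le h^*_1$; $a_1 \le a_{i+1}$ becomes the right-hand inequality $h^*_{d-1} + \cdots + h^*_{d-i} \le h^*_2 + \cdots + h^*_{i+1}$; and $b_0 \le b_i$ becomes the left-hand inequality $h^*_1 + \cdots + h^*_i \le h^*_{d-1} + \cdots + h^*_{d-i}$. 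The remaining bound $h^*_0 = 1 \le h^*_d$ is immediate because $h^*_d$ counts the interior lattice points of $P$, of which there is at least one by hypothesis. I do not expect a genuine obstacle: the deep input (existence of the triangulation $\mathcal{T}'$ and the structural properties of $\tilde{a}, \tilde{b}, a', b'$) has already been absorbed into Theorem~\ref{main} and Lemma~\ref{final}, so the only remaining work is this coefficient-by-coefficient assembly and the mechanical translation to $h^*$-form.
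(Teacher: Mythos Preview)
Your proposal is correct and follows essentially the same approach as the paper's own proof: use the decomposition $a(t)=\tilde a(t)+a'(t)$, $b(t)=\tilde b(t)+b'(t)$ from Theorem~\ref{main}, combine the symmetric unimodality of $\tilde a,\tilde b$ with the divisibility of $a'(t)$ and $tb'(t)$ by $t^2$ (and the non-negativity of their coefficients), and then translate via the formulas \eqref{a} and \eqref{b}. Your write-up simply spells out the coefficient-by-coefficient bookkeeping that the paper leaves implicit.
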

\begin{proof}
The first statement follows from Theorem~\ref{main}, using, on the one hand, the unimodality of $\tilde{a}(t)$ and $\tilde{b}(t)$ and, on the other hand, the fact that $t^2$ divides $a'(t)$ and $tb'(t)$.
If we substitute the expressions for the coefficients of $a(t)$ and $b(t)$ in \eqref{a} and \eqref{b} into these inequalities, we immediately obtain the second statement.  
 \end{proof}

\section{Additive number theory revisited}\label{revisit}

The goal of this section is to modify the results of Section~\ref{add} in order to provide the tools to prove 
Theorem~\ref{variant} in the subsequent section. 
We assume that $P$ contains an interior lattice point and will use the notation summarized in Theorem~\ref{main} above. 


Fix a regular,  lattice triangulation $\mathcal{T}'$ of $P$ into lattice-free simplices satisfying the properties described in Theorem~\ref{main}, and  fix a  maximal face $G$ of $\mathcal{T}'$.  
Recall from Section~\ref{add} that $N(G) = \coprod_{F \subseteq G} \BOX(F)$ has the structure of a  finite abelian group. If $\partial C_{G}$ denotes the cone over $(G \cap \partial P) \times \{1 \}$ in $N_{\R} \times \R$,  where $N_{\R} = N \otimes_{\Z} \R$, then  
define subsets
\[ N(G,k,l)^{a} = \{ v \in N(G) \mid u(v) = k + 2, u(-v) = d - 2 - l, v \in \partial C_G \}, \] 
\[ N(G,k,l)^{b} = \{ v \in N(G) \mid u(v) = k + 2, u(-v) = d - 1 - l, v \notin \partial C_G \}, \] 
for $0 \le k \le l \le d - 3$, and set $N(G,k,l)^a$ and $N(G,k,l)^b$ to be empty otherwise. 
Observe that since $G$ is a lattice-free simplex, $u(v) \ge 2$ for all $v \in N(G)$, and hence 
$N(G) \smallsetminus \{0\} = \coprod_{k,l} N(G,k,l)^a \cup N(G,k,l)^b$. 

\begin{remark}\label{negative2}
As in Remark~\ref{negative}, it follows from the definition that $- N(G,k,l)^a = N(G,d - 4 - l, d - 4 - k)^a$ and $- N(G,k,l)^b = N(G,d - 3 - l, d - 3 - k)^b$.  
\end{remark}

\begin{remark}\label{helpful22}
As in  Remark \ref{helpful2}, if $G$ has vertices $v_0, \ldots, v_d$ 
and $v  = \sum_{i = 0}^{d} \alpha_i (v_i,1) \in N(G,k,l)^{a}$, then exactly $l - k + 1$ of the coefficients $\alpha_i$ are zero. Similarly, if $v  = \sum_{i = 0}^{d} \beta_i (v_i,1) \in N(G,k,l)^{b}$, then exactly $l - k$ 
of the coefficients $\beta_i$ are zero. 
\end{remark}

\begin{remark}
Consider a non-zero element $w = v + v'$ in $N(G)$, for some non-zero elements $v, v' \in N(G)$. 
Observe that if $v, v' \in \partial C_{G}$, then $w \in \partial C_{G}$, and that if exactly one of $v$ or $v'$ lies in $\partial C_{G}$, then $w \notin \partial C_{G}$. 
\end{remark}

Using these three remarks, the proof of Lemma~\ref{flight} extends to give the following lemma.  

\begin{lemma}\label{flight2} With the notation above, 
\[ (N(G,k,l)^a + N(G,m,n)^a) \smallsetminus \{ 0 \} \subseteq \coprod_{p = 0}^{k + m + 2} \coprod_{q = 0}^{\min(l + m, k + n) + 2} N(G, p,q)^a, \]
\[ (N(G,k,l)^a + N(G,m,n)^b) \smallsetminus \{ 0 \} \subseteq \coprod_{p = 0}^{k + m + 2} \coprod_{q = 0}^{\min(l + m, k + n) + 2} N(G, p,q)^b, \]
\[ 
(N(G,k,l)^b + N(G,m,n)^b) \smallsetminus \{ 0 \} \subseteq
\coprod_{p = 0}^{k + m + 2} 
\coprod_{q = 0}^{\min(l + m, k + n) + 2} N(G, p - 1,q - 1)^a \cup N(G, p,q)^b. \]
\end{lemma}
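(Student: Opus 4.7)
The plan is to adapt the proof of Lemma~\ref{flight} by tracking the refined decomposition into types~$a$ (representatives lying in $\partial C_G$) and~$b$ (representatives not lying in $\partial C_G$), using the three remarks preceding the lemma. I would begin by writing $v = \sum_{i=0}^{d}\alpha_i(v_i,1)$ and $v' = \sum_{i=0}^{d}\alpha_i'(v_i,1)$ with $0\le\alpha_i,\alpha_i'<1$, so that the canonical representative of $w=v+v'$ is $\sum_{i=0}^d\{\alpha_i+\alpha_i'\}(v_i,1)$, and note that $u(w)\le u(v)+u(v')=k+m+4$.

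The third preceding remark immediately settles the type of $w$ in Parts~1 and~2: in Part~1 we have $w\in\partial C_G$, so $w$ is of type~$a$, and in Part~2 we have $w\notin\partial C_G$, so $w$ is of type~$b$. Part~3 allows both possibilities. For the type~$a$ alternative in Part~3, each interior vertex $v_{i_0}$ of $G$ must satisfy $\{\alpha_{i_0}+\alpha_{i_0}'\}=0$, i.e.\ $\alpha_{i_0}+\alpha_{i_0}'\in\{0,1\}$. Since $v$ is of type~$b$ at least one of its interior coefficients is positive, and the only way to have $\{\alpha_{i_0}+\alpha_{i_0}'\}=0$ at such an $i_0$ is $\alpha_{i_0}+\alpha_{i_0}'=1$, producing at least one forced interior carry. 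This carry forces $u(w)\le k+m+3$, so the type~$a$ index $(p',q')$ of $w$ satisfies $p'\le k+m+1$; this is precisely the shift $(p-1,q-1)$ with $p\le k+m+2$ appearing in the statement.

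For the bound on $q$, I would count zero coordinates of $w$'s representative as in Lemma~\ref{flight}: a coordinate vanishes iff $\alpha_i+\alpha_i'\in\{0,1\}$, the ``both-zero'' indices numbering at most $\min($zeros of $v$, zeros of $v')$ and the ``sum-to-one'' indices at most the carry total $k+m+4-u(w)$. By Remark~\ref{helpful22}, a type~$a$ summand with index $(k,l)$ has $l-k+1$ zero coordinates and a type~$b$ summand has $l-k$; likewise $w$ has $q-p+1$ zeros in the type~$a$ cases and $q-p$ in the type~$b$ cases. Substituting these counts yields the stated inequality $q\le\min(l+m,k+n)+2$ in Parts~1 and~3.

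Part~2 requires a sharper estimate on the both-zero count: the crude bound $\min(l-k+1,n-m)$ gives only $q\le\min(l+m+1,k+n)+2$, one more than the stated bound. The missing saving comes from the observation that $v$ of type~$a$ vanishes at all $I$ interior vertices of $G$ while $v'$ of type~$b$ has at most $I-1$ interior zeros; decomposing the both-zero count as (interior both-zero $=A_I\le I-1$) plus (boundary both-zero $\le\min(l-k+1-I,\,n-m-A_I)$) and maximizing over $A_I\in[0,I-1]$ gives exactly $\min(l-k,n-m)$, which suffices. I expect this refined interior/boundary accounting in Part~2 to be the main obstacle; the rest is a mechanical extension of the argument in Lemma~\ref{flight}.
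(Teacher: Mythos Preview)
Your proposal is correct and follows exactly the approach the paper takes: the paper's entire proof is the single sentence ``Using these three remarks, the proof of Lemma~\ref{flight} extends to give the following lemma,'' and you have carried out that extension explicitly. Your observation that Part~2 needs a sharper both-zero count than the crude $\min(l-k+1,n-m)$ is valid and your fix is right (it can be phrased more simply: among the $l-k+1$ zero coordinates of the type-$a$ element $v$, all interior vertices appear, and at least one of these is nonzero for the type-$b$ element $v'$, so at most $l-k$ positions are both-zero); the paper suppresses this detail.
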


We now derive three, rather technical, analogues of Lemma~\ref{keyD}. The proofs are all variants of the proof of Lemma~\ref{keyD}, but we include the details for the convenience of the reader. 


\begin{lemma}\label{keyD4a}
With the notation above, let $0 \le r \le r'$, $0 \le \alpha \le r + 1$ and $ d \ge  2r' + r + 7$. If $0 \le i \le r$ and $0 \le j \le r+ r' - i$, then 
\[
\sum_{k = 0}^{i} \sum_{l = 0}^{i +j - k}  |N(G, k,l)^a| +  |N(G, k - \alpha,l - \alpha)^b|  \le 
\]
\[
\sum_{q = 0}^{i + j + 1} 
 |N(G, r' + 1, r' + 1 + q)^a|  + |N(G, r'  + 1 - \alpha, r' + 1 + q - \alpha)^b|  \]
\[
+ \sum_{p = 0}^{i} \sum_{q = 0}^{i + j - p} |N(G, r' + 2 + p, r' + 2 + q)^a| +  
|N(G, r' + 2 - \alpha +  p, r' + 2 - \alpha + q)^b|.
\]
\end{lemma}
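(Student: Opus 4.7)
The plan is to parallel the proof of Lemma~\ref{keyD}, now enlarging both $A$ and $B$ so that each contains $a$-type elements in the familiar range and $b$-type elements in the same range but shifted by $\alpha$. Set
\[
A \setminus \{0\} = \coprod_{k=0}^{i} \coprod_{l=0}^{i+j-k} \bigl[ N(G, k, l)^a \cup N(G, k-\alpha, l-\alpha)^b \bigr],
\]
\[
B \setminus \{0\} = \coprod_{m=0}^{r'} \coprod_{n=0}^{r'+i+j+2} \bigl[ N(G, m, n)^a \cup N(G, m-\alpha, n-\alpha)^b \bigr],
\]
so that $|A|-1$ equals the left-hand side of the desired inequality. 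The choice of ranges gives $A^a \subseteq B^a$ and $A^b \subseteq B^b$ as subsets of $N(G)$ (using $i \le r'$), a fact that is essential later for the $b$-part of the bound.

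The first task is to verify $A \cap (-B) = \{0\}$. The cross-type intersections $A^a \cap (-B^b)$ and $A^b \cap (-B^a)$ are empty because the $a$-type and $b$-type subsets of $N(G)$ are disjoint and, by Remark~\ref{negative2}, negation preserves each type. The intersection $A^a \cap (-B^a)$ is empty by the same $q$-coordinate argument as in Lemma~\ref{keyD}, and an analogous calculation using Remark~\ref{negative2} for the $b$-part (the $q$-coordinate of $-B^b$ lies in $[d-3-r'+\alpha,\,d-3+\alpha]$, whereas elements of $A^b$ have $q \le i+j-\alpha \le r+r'-\alpha$) shows $A^b \cap (-B^b) = \emptyset$, again using $d \ge 2r'+r+7$. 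Kemperman-Scherk then gives $|(A+B) \setminus B| \ge |A|-1$, which, after observing that $B$ partitions cleanly into $a$- and $b$-type pieces, becomes
\[
|(A+B)^a \setminus B^a| + |(A+B)^b \setminus B^b| \ge |A^a| + |A^b|.
\]

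For the $a$-part bound, the relevant contributions to $(A+B)^a$ are $B^a$, $A^a+B^a$, and the $a$-component of $A^b+B^b$ produced by the $b+b$ rule of Lemma~\ref{flight2}. The first two are handled exactly as in Lemma~\ref{keyD}, placing $(A^a+B^a) \setminus B^a$ inside the two usual blocks that form the $a$-part right-hand side $R^a$; the cross-contribution $(A^b+B^b)^a$ produces $a$-type indices further shifted by an $\alpha$-dependent amount, so it lies in a strictly smaller sub-block and is automatically contained in $R^a$. For the $b$-part, the five $b$-producing contributions ($\{0\}+B^b$, $A^b+\{0\}$, $A^a+B^b$, $A^b+B^a$, and $(A^b+B^b)^b$) all satisfy $q \le r'+i+j+2-\alpha$ by a routine flight computation; since $B^b$ fills the rectangle $[0, r'-\alpha] \times [0, r'+i+j+2-\alpha]$, any element of $(A+B)^b \setminus B^b$ must have $p \ge r'+1-\alpha$, and the refined $q$-bounds at each level of $p$, which are a uniform $-\alpha$ shift of the $a$-case, match $R^b$ exactly.

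The main obstacle is this detailed flight-lemma bookkeeping in the $b$-part: one must verify the $(p, q)$-bounds for all three non-trivial $b$-producing contributions $A^a+B^b$, $A^b+B^a$, and $(A^b+B^b)^b$, and separately check that the auxiliary $a$-contribution from $A^b+B^b$ stays inside $R^a$. Once the containments $(A+B)^a \setminus B^a \subseteq R^a$ and $(A+B)^b \setminus B^b \subseteq R^b$ are established, combining with Kemperman-Scherk yields $|A^a| + |A^b| \le |R^a| + |R^b|$, which is precisely the inequality of the lemma.
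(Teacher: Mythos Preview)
Your proposal is correct and follows essentially the same approach as the paper: the same sets $A$ and $B$, the same verification that $A \cap (-B) = \{0\}$ via Remark~\ref{negative2}, and the same appeal to Kemperman--Scherk together with Lemma~\ref{flight2}. The only organizational difference is that the paper packages the flight-lemma bookkeeping into a single containment
\[
(A+B)\setminus\{0\} \;\subseteq\; \coprod_{p=0}^{r'+1}\coprod_{q=0}^{r'+i+j+2}\bigl[N(G,p,q)^a \cup N(G,p-\alpha,q-\alpha)^b\bigr] \;\cup\; \coprod_{p=r'+2}^{r'+i+2}\coprod_{q=0}^{2r'+i+j+4-p}\bigl[N(G,p,q)^a \cup N(G,p-\alpha,q-\alpha)^b\bigr],
\]
and then subtracts $B$, whereas you separate the $a$- and $b$-type contributions and check each of the five cross-terms individually. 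Your type-by-type breakdown is more explicit about why the $b+b\to a$ contribution causes no trouble, but the paper's unified containment is shorter; the content is the same.
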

\begin{proof}
If $A$ and $B$ are the subsets of $N(G)$ containing the origin and defined by
\[ A \smallsetminus \{0\} = \coprod_{k = 0}^{i} \; \coprod_{l = 0}^{i + j - k} N(G,k,l)^a \cup N(G,k - \alpha,l - \alpha)^b, \]
\[
B \smallsetminus \{ 0 \} =   
\coprod_{m = 0}^{r'} \; \coprod_{n = 0}^{r' + i + j + 2} N(G, m ,n )^a  \cup N(G, m - \alpha, n - \alpha)^b,  
\]
then Lemma~\ref{flight} implies that 
\begin{align*}
(A + B) \smallsetminus \{ 0 \} \subseteq \coprod_{p = 0}^{r' + 1} \; &\coprod_{q = 0}^{r' + i + j + 2} 
N(G,p,q)^a \cup N(G,p - \alpha ,q - \alpha)^b \\
&\cup \coprod_{p = r' + 2}^{r' + i + 2} \; \coprod_{q = 0}^{2r' + i + j  + 4  - p} N(G,p,q)^a \cup N(G,p - \alpha,q - \alpha)^b.
\end{align*}

By Remark \ref{negative2}, 
\[ -B \smallsetminus \{ 0 \} =  \coprod_{m = 0}^{r'} \; \coprod_{n = 0}^{r' + i + j + 2} N(G, d - 4 - n,d - 4 - m)^a \cup N(G, d - 3- n + \alpha ,d - 3- m + \alpha)^b. \]
Consider an element $v \in N(G,p,q)^a$. If $v \in A$, then $q \le i + j \le r + r'$, and if $v \in -B$, then $q \ge d - 4 - r'$.  
Consider an element $v \in N(G,p,q)^b$. If $v \in A$, then $q \le i + j - \alpha \le r + r'$, and if $v \in -B$, then $q \ge d - 3 - r' + \alpha \ge d - 3 - r'$. 
We conclude that 
$A \cap (-B) = \{ 0 \}$ provided $d > 2r' + r + 4$. 
Theorem~\ref{useful} now applies and, after simplification, gives the result. 
\end{proof}

\begin{lemma}\label{keyD4b}
With the notation above, let $0 \le r \le r'$ and $ d \ge  2r' + r + 6$. If $0 \le i \le r$ and $0 \le j \le r+ r' - i$, then 
\[
\sum_{k = 0}^{i} \sum_{l = 0}^{i + j - k} |N(G, k - 1,l - 1)^a|  + |N(G, k,l)^b| \le  
\]
\[
\sum_{q = 0}^{i + j + 1} 
|N(G, r' + 1, r' + 1 + q)^b| 
+
\sum_{p = 0}^{i} \sum_{q = 0}^{i + j - p} |N(G, r' + 1 + p , r' + 1 + q)^a| +  
|N(G, r' + 2 + p, r' + 2 + q)^b|.
\]
\end{lemma}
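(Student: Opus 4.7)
My plan is to mimic the proof of Lemma~\ref{keyD4a}. The essential new input comes from the third case of Lemma~\ref{flight2}, which records that a sum of two $b$-type elements produces either an $a$-type output (at first index shifted down by one) or a $b$-type output. This $-1$ shift explains both the ``$k-1, l-1$'' pattern on the LHS and the asymmetry between $a$- and $b$-type index ranges on the RHS.

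First, I would define
\[
A \setminus \{0\} = \coprod_{k=0}^{i} \coprod_{l=0}^{i+j-k} N(G, k-1, l-1)^{a} \cup N(G, k, l)^{b},
\]
\[
B \setminus \{0\} = \coprod_{m=0}^{r'} \coprod_{n=0}^{r'+i+j+2} N(G, m, n)^{b},
\]
so that $|A| - 1$ equals the LHS of the desired inequality. Next, I would verify $A \cap (-B) = \{0\}$. By Remark~\ref{negative2}, $-B$ consists of $b$-type elements with first index at least $d - 5 - r' - i - j$. The $a$-type elements in $A$ cannot meet $-B$ at all (different types), and the $b$-type elements in $A$ have first index at most $i \le r$. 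Since $i + j \le r + r'$, the dimension hypothesis $d \ge 2r' + r + 6$ yields $d - 5 - r' - (r + r') > i$, giving the required disjointness.

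Theorem~\ref{useful} then gives $|A + B| \ge |A| + |B| - 1$, so $|A| - 1 \le |A + B| - |B|$. To complete the proof, I would bound $(A + B) \setminus \{0\}$ above using Lemma~\ref{flight2}: the first and second cases (involving an $a$-type input) yield only $b$-type outputs, while the third case (two $b$-type inputs) yields both $a$-type (at $(p-1, q-1)$) and $b$-type (at $(p, q)$) outputs. Following the same strategy as in the proof of Lemma~\ref{keyD4a}, one obtains an inclusion of $(A + B) \setminus \{0\}$ into the union of $N(G, p-1, q-1)^{a} \cup N(G, p, q)^{b}$ over the rectangular range $p \in \{0, \ldots, r'+1\}$, $q \in \{0, \ldots, r'+i+j+2\}$ and the triangular range $p \in \{r'+2, \ldots, r'+i+2\}$, $q \in \{0, \ldots, 2r'+i+j+4-p\}$. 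Subtracting $|B|$ (which sits inside the $b$-type part of the first range at $p \in \{0, \ldots, r'\}$) and reindexing via $p = r'+1$ for the residual first-range terms and $p = r'+2+p'$ for the second-range terms produces exactly the RHS of the lemma.

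The main obstacle will be ensuring that the residual $a$-type contributions after subtracting $|B|$ line up precisely with the $N(G, r'+1+p, r'+1+q)^{a}$ terms appearing on the RHS, rather than producing spurious contributions at lower first indices. This precise matching is driven by the $-1$ shift in the $a$-type output of the third case of Lemma~\ref{flight2}, which is exactly what is needed to translate the bound on $p \in \{r'+2, \ldots, r'+i+2\}$ into $a$-type first indices $\{r'+1, \ldots, r'+i+1\}$.
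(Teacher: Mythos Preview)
Your choice of $B$ is too small, and this is a genuine gap. You take $B\setminus\{0\}$ to consist only of $b$-type elements, but the paper's $B$ also contains an $a$-type block:
\[
\coprod_{m=0}^{r'+1}\;\coprod_{n=0}^{r'+i+j+2} N(G,m-1,n-1)^{a}.
\]
That block is not decorative. Since $0\in B$, the set $A+B$ contains all of $A$, in particular the $a$-type elements $N(G,k-1,l-1)^{a}$ with $k\le i$, whose first index is at most $i-1\le r-1$. Moreover, the third case of Lemma~\ref{flight2} applied to the $b$-part of $A$ against the $b$-part of $B$ produces $a$-type elements $N(G,p-1,q-1)^{a}$ over the entire rectangular range $0\le p\le r'+1$, not only over the triangular tail. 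So $(A+B)\setminus\{0\}$ contains $a$-type elements with first index anywhere from $0$ up to $r'$. With your $B$ (which has no $a$-type component), none of these low-index $a$-type elements get removed when you pass to $(A+B)\setminus B$, and they simply do not match any term on the right-hand side of the lemma, where every $a$-type summand has first index at least $r'+1$. Your final paragraph correctly identifies this as ``the main obstacle,'' but the $-1$ shift you invoke only handles the triangular range $p\in\{r'+2,\ldots,r'+i+2\}$; it does nothing about the rectangular range $p\in\{0,\ldots,r'+1\}$.

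The fix is exactly what the paper does: enlarge $B$ by adjoining the $a$-type block above, so that all $a$-type elements of $(A+B)$ with first index $\le r'$ already lie in $B$ and cancel in $|A+B|-|B|$. One must then also check $A\cap(-B)=\{0\}$ on the $a$-type side, which is where the paper uses Remark~\ref{negative2} together with $d\ge 2r'+r+6$; this is straightforward, since $A$'s $a$-type elements have second index at most $i+j-1\le r+r'-1$ while $-B$'s $a$-type elements have second index at least $d-4-r'$.
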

\begin{proof}
If $A$ and $B$ are the subsets of $N(G)$ containing the origin and defined by
\[ A \smallsetminus \{0\} = \coprod_{k = 0}^{i} \; \coprod_{l = 0}^{i + j - k} N(G,k - 1,l - 1)^a \cup N(G,k,l)^b, \]
\[
B \smallsetminus \{ 0 \} =   \coprod_{m = 0}^{r' + 1} \; \coprod_{n = 0}^{r' + i + j + 2}  N(G, m - 1,n - 1)^a  \cup \coprod_{m = 0}^{r'} \; \coprod_{n = 0}^{r' + i + j + 2} N(G, m,n)^b,  
\]
then Lemma~\ref{flight} implies that 
\begin{align*}
(A + B) \smallsetminus \{ 0 \} \subseteq \coprod_{p = 0}^{r' + 1} \; &\coprod_{q = 0}^{r' + i +  j + 2} 
N(G,p - 1,q - 1)^a \cup N(G,p,q)^b \\
&\cup \coprod_{p = r' + 2}^{i + r' + 2} \; \coprod_{q = 0}^{2r' + i + j  + 4  - p} N(G,p - 1,q - 1)^a \cup N(G,p,q)^b.
\end{align*}
By Remark \ref{negative}, 
\[ -B \smallsetminus \{ 0 \} =  \coprod_{m = 0}^{r' + 1} \; \coprod_{n = 0}^{r' + i + j + 2} N(G, d - 3 - n,d - 3 - m)^a 
\cup \coprod_{m = 0}^{r'} \; \coprod_{n = 0}^{r' + i + j + 2} N(G, d - 3- n,d - 3- m)^b. \]
Consider an element $v \in N(G,p,q)^a$. If $v \in A$, then $q \le i + j - 1 \le r + r' - 1$, and if $v \in -B$, then $q \ge d - 4 - r'$. Consider an element $v \in N(G,p,q)^b$. If $v \in A$, then $q \le i + j \le r + r'$, and if $v \in -B$, then $q \ge d - 3 - r'$. We conclude that 
$A \cap (-B) = \{ 0 \}$ provided $d - 3 - r' > r+ r'$. 
Theorem~\ref{useful} now applies and, after simplification, gives the result. 
\end{proof}

\begin{lemma}\label{keyD4c}
With the notation above, let $0 \le r \le r'$,  $0 \le \alpha \le r + 1$ and $ d \ge  2r'  + r + \alpha + 6$. If $0 \le i \le r$ and $0 \le j \le r+ r' - i$, then 
\[
\sum_{k = 0}^{i} \sum_{l = 0}^{i +j - k}  |N(G, k,l)^b| \le 
\sum_{p = \alpha}^{i} \sum_{q = 0}^{i + j -p} |N(G, r' + 1 + p, r' + 1 + q)^a| \]
\[
+ \sum_{p = 0}^{\alpha} \sum_{q = 0}^{\alpha + i + j + 1} |N(G, r' + 1 + p, r' + 1 + q)^b| + 
\sum_{p = 0}^{i } \sum_{q = 0}^{i + j   - p} |N(G,r' + \alpha + 2 + p,r' + \alpha +  2 + q)^b|.
\]
\end{lemma}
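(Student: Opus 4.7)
My plan is to adapt the structure of the two preceding lemmas. I would choose finite subsets $A, B \subseteq N(G)$ containing the origin, apply Lemma~\ref{flight2} to obtain an inclusion $(A+B) \setminus \{0\} \subseteq (B \setminus \{0\}) \cup C$ where $|C|$ equals the right-hand side of the claim, verify the disjointness $A \cap (-B) = \{0\}$ using Remark~\ref{negative2} and the hypothesis $d \ge 2r' + r + \alpha + 6$, and then apply Kemperman--Scherk (Theorem~\ref{useful}). The chain
$|A| + |B| - 1 \le |A+B| \le |B| + |C|$
yields $|A| - 1 \le |C|$, which is the desired inequality once we arrange that $|A|-1$ equals the left-hand side and $|C|$ equals the right-hand side.

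Since the left-hand side involves only $|N(G,k,l)^b|$ terms, I would take
$$A \setminus \{0\} = \coprod_{k=0}^{i} \coprod_{l=0}^{i+j-k} N(G,k,l)^b.$$
For $B$, the three right-hand blocks dictate the shape. The block $\sum |N(G, r'+1+p, r'+1+q)^a|$ consists of $N^a$ pieces, which, from $A$ of type $N^b$, can only arise via the shifted inclusion $N^b + N^b \subseteq N^a \cup N^b$ of Lemma~\ref{flight2}; the remaining two blocks are $N^b$ pieces, arising from either $N^b + N^a \subseteq N^b$ or the $N^b$ part of the same shifted inclusion. Accordingly, I expect $B$ to decompose as the disjoint union (with the origin) of an $N^a$-component $B^a = \coprod N(G,m,n)^a$ with first-coordinate range $[0, r'+\alpha]$ and an $N^b$-component $B^b = \coprod N(G,m,n)^b$ with first-coordinate range $[0, r']$, with the second-coordinate bounds selected to match the wider $q$-range of the middle block versus the narrower $q$-ranges of the other two.

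Adding $A$ to $B^a$ produces only $N^b$ contributions with first coordinate at most $r' + \alpha + i + 2$, which is precisely the range covered by $B^b$ together with the two $N^b$ blocks (whose first coordinates are $[r'+1, r'+1+\alpha]$ and $[r'+\alpha+2, r'+\alpha+2+i]$, respectively). Adding $A$ to $B^b$ produces both $N^a$ contributions of first coordinate at most $r'+i+1$, absorbed by $B^a$ for small values and by the first block for values in $[r'+1+\alpha, r'+1+i]$, and $N^b$ contributions covered as before. The disjointness $A \cap (-B) = \{0\}$ reduces, by Remark~\ref{negative2}, to the statement that elements of $-B^b$ have first coordinate exceeding $i$ (while $-B^a$ is of type $N^a$ and so automatically disjoint from $A$); this is precisely the condition the hypothesis $d \ge 2r' + r + \alpha + 6$ is designed to guarantee once the maximum second-coordinates of $B^b$ are taken into account.

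The main obstacle is the bookkeeping: pinning down the second-coordinate bounds for $B^a$ and $B^b$ so that Lemma~\ref{flight2} produces the inclusion $(A+B) \setminus \{0\} \subseteq (B \setminus \{0\}) \cup C$ exactly matching the three blocks on the right-hand side, while simultaneously ensuring $A \cap (-B) = \{0\}$. This is analogous to, but slightly more delicate than, the calculations in Lemmas~\ref{keyD4a} and~\ref{keyD4b}, due to the asymmetry introduced by the parameter $\alpha$ and the uneven $q$-ranges of the three right-hand blocks.
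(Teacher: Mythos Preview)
Your approach is essentially the paper's own: the same $A$, and $B$ split into an $N^a$-part with first index in $[0,r'+\alpha]$ and an $N^b$-part with first index in $[0,r']$, followed by Lemma~\ref{flight2} and Kemperman--Scherk. The paper's explicit second-index bounds are $n \le r'+i+j+1$ for $B^a$ and $n \le r'+\alpha+i+j+2$ for $B^b$, which is what your ``wider $q$-range of the middle block'' anticipates.

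One correction on the disjointness step: you propose to check that elements of $-B^b$ have \emph{first} coordinate exceeding $i$. With the $B^b$ above this reads $d - 3 - (r'+\alpha+i+j+2) > i$, i.e.\ $d > r'+\alpha+2i+j+5$; at $i=r$, $j=r'$ this demands $d \ge 2r'+2r+\alpha+6$, which is stronger than the hypothesis when $r \ge 1$. The paper instead compares \emph{second} coordinates: for $v \in N(G,p,q)^b$ one has $q \le i+j \le r+r'$ if $v \in A$, and $q \ge d-3-r'$ if $v \in -B^b$, so $A \cap (-B) = \{0\}$ as soon as $d > 2r'+r+3$, comfortably implied by $d \ge 2r'+r+\alpha+6$. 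Switching to the second-coordinate check fixes your argument without further change.
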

\begin{proof}
If $A$ and $B$ are the subsets of $N(G)$ containing the origin and defined by
\[ A \smallsetminus \{0\} = \coprod_{k = 0}^{i} \; \coprod_{l = 0}^{i + j - k} N(G,k,l)^b, \]
\[
B \smallsetminus \{ 0 \} =   
\coprod_{m = 0}^{r' + \alpha} \; \coprod_{n = 0}^{r' + i + j + 1} N(G, m ,n )^a  \cup
\coprod_{m = 0}^{r'} \; \coprod_{n = 0}^{r' + \alpha +  i + j + 2} N(G, m, n)^b,  
\]
then Lemma~\ref{flight} implies that 
\begin{align*}
(A + B) \smallsetminus \{ 0 \} \subseteq \coprod_{p = 0}^{r'} \; &\coprod_{q = 0}^{r' + i + j} 
N(G,p,q)^a 
\cup \coprod_{p = r' + 1}^{r' + i + 1} \; \coprod_{q = 0}^{2r' + i + j  + 2  - p} N(G,p,q)^a \\
\cup \coprod_{p = 0}^{r' + \alpha + 1} \; &\coprod_{q = 0}^{r' + \alpha+ i + j + 2} 
 N(G,p,q)^b 
 \cup \coprod_{p = r' + \alpha +  2}^{r' + \alpha +  i + 2} \; \coprod_{q = 0}^{2r' + 2\alpha + i + j  + 4  - p} N(G,p,q)^b.
\end{align*}

By Remark \ref{negative2}, 
\begin{align*} -B \smallsetminus \{ 0 \} =  \coprod_{m = 0}^{r' + \alpha} \; &\coprod_{n = 0}^{r' + i + j + 1} N(G, d - 4 - n,d - 4 - m)^a \\
 &\cup \coprod_{m = 0}^{r'} \; \coprod_{n = 0}^{r' + \alpha + i + j + 2} N(G, d - 3- n,d - 3- m)^b. 
\end{align*}
First observe that $N(G,p,q)^a \cap A = \emptyset$ by definition. 
Consider an element $v \in N(G,p,q)^b$. If $v \in A$, then $q \le i + j \le r + r'$, and if $v \in -B$, then $q \ge d - 3 - r'$. 
We conclude that 
$A \cap (-B) = \{ 0 \}$ provided $d - 3 - r' > r + r'$. 
Theorem~\ref{useful} now applies and, after simplification, gives the result. 
\end{proof}

\begin{remark}
Although the bounds on dimension in the above lemmas can be improved, this 
does not lead to new inequalities. 
\end{remark}

\section{Proof of Theorem~\ref{variant}}\label{proof2}

The goal of this section is to sketch the proof of Theorem~\ref{variant}. 
The proof may be viewed as a (rather technical) modification of the proof of Theorem~\ref{superA}. 

We continue with the notation of Theorem~\ref{main} and the previous section. 
We define 
\[
a(k,l) = \sum_{F \in \mathcal{T}', F \subseteq \partial P} h_{F}(1) | \{ v \in \BOX(F) \mid (u(v), u(-v) ) = (2 + k, d - 2 - l)\}|, 
\]
\[
b(k,l) = \sum_{F \in \mathcal{T}', , F \nsubseteq \partial P} h_{F}(1) | \{ v \in \BOX(F) \mid (u(v), u(-v) ) = (2 + k, d - 1 - l)\}|. 
\]
for $2 \le k \le l \le d - 1$, and set $a(k,l) = b(k,l) = 0$ otherwise. As in Section~\ref{proof1}, we have the following simple lemma. 

\begin{lemma}
With the notation of the previous section, 
 \[ a(k,l) =  \sum_{ \dim G = d, G \in \mathcal{T}'} N(G,k,l)^a, \] 
\[b(k,l) =  \sum_{ \dim G = d, G \in \mathcal{T}'} N(G,k,l)^b. \] 
\end{lemma}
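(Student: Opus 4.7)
The plan is to mimic the proof of Lemma~\ref{silly} from Section~\ref{add}, with the key observation being that $h_{F}(1)$ counts the number of top-dimensional simplices of $\mathcal{T}'$ containing $F$. Specifically, evaluating the definition
\[ h_{F}(t) = \sum_{F \subseteq G} t^{\dim G - \dim F} (1-t)^{d - \dim G} \]
at $t = 1$ kills every term except those with $\dim G = d$ (since $0^{0} = 1$ and $0^{k} = 0$ for $k > 0$), giving $h_{F}(1) = \#\{G \in \mathcal{T}' : \dim G = d,\, F \subseteq G\}$.

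First I would substitute this into the definition of $a(k,l)$ and swap the order of summation:
\[ a(k,l) = \sum_{\substack{G \in \mathcal{T}' \\ \dim G = d}} \, \sum_{\substack{F \subseteq G \\ F \subseteq \partial P}} |\{v \in \BOX(F) : (u(v), u(-v)) = (k+2, d-2-l)\}|. \]
It then remains to identify the inner sum over $F$ with $|N(G,k,l)^{a}|$. Using the partition $N(G) = \coprod_{F \subseteq G} \BOX(F)$, the inner sum counts precisely those $v \in N(G)$ which satisfy $u(v) = k+2$, $u(-v) = d-2-l$, and which lie in $\BOX(F)$ for some face $F \subseteq G$ with $F \subseteq \partial P$. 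The last condition is equivalent to $v \in \partial C_{G}$: if $F \subseteq \partial P$ then $\BOX(F)$ lies in the cone over $F \times \{1\}$, which is contained in $\partial C_{G}$; conversely, if $F \nsubseteq \partial P$, then the relative interior of the cone over $F \times \{1\}$ (which is where $\BOX(F)$ lives) is disjoint from the union of cones over faces contained in $\partial P$, i.e.\ from $\partial C_{G}$.

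The argument for $b(k,l)$ is entirely analogous: after the same interchange of summation, the inner sum becomes
\[ \sum_{\substack{F \subseteq G \\ F \nsubseteq \partial P}} |\{v \in \BOX(F) : (u(v), u(-v)) = (k+2, d-1-l)\}|, \]
and by the same characterization $v \notin \partial C_{G}$ iff $F \nsubseteq \partial P$, this equals $|N(G,k,l)^{b}|$. (Here the range includes the case $F = G$ itself, contributing elements $v$ with all barycentric coordinates strictly positive.)

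The only real point to verify carefully is the clean dichotomy $v \in \partial C_{G} \iff F \subseteq \partial P$ for $v \in \BOX(F)$; this is a straightforward geometric check and constitutes the only non-bookkeeping step. No significant obstacle is anticipated.
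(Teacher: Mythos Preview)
Your proposal is correct and follows the same approach as the paper, which simply states that the result ``follows from the definitions, using the observation that $h_{F}(1)$ equals the number of maximal faces of $\mathcal{T}'$ containing $F$.'' You have spelled out the bookkeeping (the interchange of summation and the dichotomy $v \in \partial C_G \iff F \subseteq \partial P$) that the paper leaves implicit.
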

\begin{proof}
This follows from the definitions, using the observation that  $h_{F}(1)$ equals the number of maximal faces of $\mathcal{T}'$  containing $F$. 
\end{proof} 

 Using the lemma above, and summing the inequalities in Lemma~\ref{keyD4a},
 Lemma~\ref{keyD4b} and Lemma~\ref{keyD4c}
  over all maximal faces $G$ of the triangulation $\mathcal{T}'$, we obtain the following lemma.

\begin{lemma}\label{christmas}
With the notation above, let $0 \le r \le r'$ and  $0 \le \alpha \le r + 1$. If $0 \le i \le r$ and $0 \le j \le r+ r' - i$, then 

\begin{enumerate}

\item\label{dot1}
For $ d \ge  2r' + r + 7$,
\[ \sum_{k = 0}^{i} \sum_{l = 0}^{i +j - k}  a(k,l) +  b(k - \alpha,l - \alpha)  \le 
\sum_{q = 0}^{i + j + 1}
 a(r' + 1, r' + 1 + q)  + b(r'  + 1 - \alpha, r' + 1 + q - \alpha)  \]
\[
+ \sum_{p = 0}^{i} \sum_{q = 0}^{i + j - p} a(r' + 2 + p, r' + 2 + q) +  
b(r' + 2 - \alpha +  p, r' + 2 - \alpha + q) \]

\item\label{dot2}
For $ d \ge  2r' + r + 6$, 
\[
\sum_{k = 0}^{i} \sum_{l = 0}^{i + j - k} a(k - 1,l - 1)  + b(k,l) \le  \sum_{q = 0}^{i + j + 1}  
b(r' + 1, r' + 1 + q) 
\]
\[
+
\sum_{p = 0}^{i} \sum_{q = 0}^{i + j - p} a(r' + 1 + p , r' + 1 + q) +  
b(r' + 2 + p, r' + 2 + q).
\]

\item\label{dot3}
For  $ d \ge  2r'  + r + \alpha +  6$, 
\[
\sum_{k = 0}^{i} \sum_{l = 0}^{i +j - k}  b(k,l) \le 
\sum_{p = \alpha}^{i} \sum_{q = 0}^{i + j -p} a(r' + 1 + p, r' + 1 + q) \]
\[
+ \sum_{p = 0}^{\alpha} \sum_{q = 0}^{\alpha + i + j + 1} b(r' + 1 + p, r' + 1 + q) + 
\sum_{p = 0}^{i } \sum_{q = 0}^{i + j   - p} b(r' + \alpha + 2 + p,r' + \alpha +  2 + q).
\]

\end{enumerate}
\end{lemma}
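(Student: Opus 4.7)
The proof plan is straightforward: each of the three inequalities in the lemma will be obtained by summing the corresponding face-level inequality from Section~\ref{revisit} over all maximal simplices of $\mathcal{T}'$. This is essentially a bookkeeping exercise, made possible by the preceding lemma, which expresses
\[
a(k,l) = \sum_{G} |N(G,k,l)^a|, \qquad b(k,l) = \sum_{G} |N(G,k,l)^b|,
\]
where the sum runs over the maximal ($d$-dimensional) faces $G$ of $\mathcal{T}'$.

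For part~(1), I would fix $(r,r',\alpha,i,j)$ satisfying the stated hypotheses and invoke Lemma~\ref{keyD4a} at each maximal face $G$; its dimension assumption $d \ge 2r'+r+7$ is precisely the one in our hypothesis, and its conditions $0 \le r \le r'$ and $0 \le \alpha \le r+1$ match ours. Summing the resulting face-level inequalities over $G$ and substituting the identities above then produces the claimed inequality term-by-term. Parts~(2) and~(3) are handled identically, using Lemma~\ref{keyD4b} and Lemma~\ref{keyD4c} respectively; the dimension hypotheses $d \ge 2r'+r+6$ and $d \ge 2r'+r+\alpha+6$ transfer over unchanged, as do the range conditions on $\alpha$.

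The only point requiring any care is the treatment of boundary indices, such as $b(k-\alpha,l-\alpha)$ when $k<\alpha$ or $a(k-1,l-1)$ when $k=0$. By the convention adopted at the start of Section~\ref{revisit}, the sets $N(G,\cdot,\cdot)^a$ and $N(G,\cdot,\cdot)^b$ are empty outside their natural index range, so both $a(\cdot,\cdot)$ and $b(\cdot,\cdot)$ vanish there. Since the face-level lemmas are written with the same convention, the summation is compatible term-by-term and no genuine obstacle arises; the proof reduces to three direct applications of the summation scheme.
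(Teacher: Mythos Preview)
Your proposal is correct and follows exactly the paper's approach: the paper states the lemma with the one-line justification that it follows by summing the inequalities of Lemmas~\ref{keyD4a}, \ref{keyD4b}, and \ref{keyD4c} over all maximal faces $G$ of $\mathcal{T}'$, using the preceding lemma to identify $a(k,l)$ and $b(k,l)$ with those sums. Your remark on the vanishing conventions for out-of-range indices is an accurate elaboration of a point the paper leaves implicit.
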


Our next goal is to modify the proof of Lemma~\ref{full4} in order to complete the proof of the theorem. 
Recall from Theorem~\ref{main} that we may write $a(t) = \tilde{a}(t) + a'(t)$ and $b(t) = \tilde{b}(t) + b'(t)$, where $\tilde{a}(t)$ and $\tilde{b}(t)$ have symmetric, unimodal coefficients. It follows from the bounds on $d$ that all inequalities in Theorem~\ref{variant} hold for the coefficients of $\tilde{a}(t)$ and $\tilde{b}(t)$. Hence it remains to consider the contributions of 
\[
a'(t) = \sum_{F \in \mathcal{T}', \emptyset \neq F \subseteq \partial P} \; \sum_{w \in \BOX(F)} t^{u(w)} h_{F}(t),
\]
and 
\[
b'(t) =  t^{-1} \cdot \sum_{F \in \mathcal{T}', \emptyset \neq F \nsubseteq \partial P} \; \sum_{w \in \BOX(F)} t^{u(w)} h_{F}(t).
\]

We will prove the three statements in Theorem~\ref{variant} separately. 
All three proofs follow the proof of Theorem~\ref{superA}, with the third inequality requiring a little more work than the other two. 

Firstly, we consider the inequality
\begin{equation}\label{hoho}
\lambda  a_1 + \mu b_0 +  \sum_{j = 0}^{r} a_{j + 2}  + \sum_{j = 0}^{r - \alpha} b_{j + 1}  \le  a_{r' + 3} + 
\sum_{j = 0}^{r + r'} \lambda_j a_{r' + 4 + j} +  b_{r' + 2 - \alpha} + \sum_{j = 0}^{r + r'} \mu_j b_{r' + 3 - \alpha + j}, 
\end{equation}
for $d \ge 2r' + r + 7$ and  $0 \le \alpha \le r$. The first part of the proof of Lemma~\ref{full4} goes through unchanged. That is, the unimodality  and symmetry of the polynomials $h_F(t)$ implies that the contribution of $a'(t)$ and $b'(t)$ to the left hand side of \eqref{hoho} is at most the contribution of the right hand side of \eqref{hoho} provided $l + k \ge r + r' + 1$ or $k > r$, and $l + k \ge r + r' - 2\alpha + 1$ or $k > r - \alpha$, respectively. Moreover, the proof of Lemma~\ref{full4} implies that it remains to bound the following sum in terms of contributions to the right hand side of \eqref{hoho}:
\[
T := \sum_{k = 0}^r \lbrack \sum_{l = 0}^{2r - k} a(k,l) + b(k - \alpha, l - \alpha) + \sum_{l = 2r - k + 1}^{r + r' - k} \frac{r - k + 1}{l - k + 1} (a(k,l) + b(k - \alpha, l - \alpha))  
\rbrack.
\]
Applying the change of basis in the proof of Lemma~\ref{full4} with $x_{k,l} = a(k,l) - b(k - \alpha, l - \alpha)$, we may write 
\[ 
T=  \sum_{i = 0}^{r } \sum_{j = 0}^{r + r' - i}  \alpha_{i,j}  \sum_{k = 0}^{i} \sum_{l = 0}^{i + j - k} a(k,l) + b(k - \alpha, l - \alpha),
\]
 and then \eqref{dot1} in Lemma~\ref{christmas} implies that 
 \[
 T \le  \sum_{i = 0}^{r } \sum_{j = 0}^{r + r' - i}  \alpha_{i,j} \; [ \sum_{q = 0}^{i + j + 1}
 a(r' + 1, r' + 1 + q)  + b(r'  + 1 - \alpha, r' + 1 + q - \alpha)  \].
\[
+ \sum_{p = 0}^{i} \sum_{q = 0}^{i + j - p} a(r' + 2 + p, r' + 2 + q) +  
b(r' + 2 - \alpha +  p, r' + 2 - \alpha + q)]. \]
Exactly as in the proof of Lemma~\ref{full4}, changing basis again gives 
\[
T \le   S:= \sum_{p = 0}^r \lbrack \sum_{q = 0}^{2r  - p} a(r' + 2 + p,r' + 2 + q) + b(r' + 2 - \alpha + p,r' + 2 - \alpha + q) \] \[ + \sum_{q = 2r - p + 1}^{r + r' - p} \frac{r - p + 1}{q - p + 1} (a(r' + 2 + p,r' + 2 + q) + b(r' + 2 - \alpha + p,r' + 2 - \alpha + q)  ) \rbrack
\]
\[
 + \sum_{q = 0}^{2r + 1} a(r' + 1,r' + 1 + q) + b(r' + 1 - \alpha, r' + 1 - \alpha + q) + \] \[
 \sum_{q = 2r + 2}^{r  + r' + 1} \frac{ r + 1}{q} a(r' + 1, r' + 1 + q) + b(r' + 1 - \alpha, r' + 1 - \alpha + q).
\]
If we now run the rest of the proof of Lemma~\ref{full4}, considering the contributions of $a(p,q)$ and $b(p,q)$ separately, the proof follows verbatim. 


Secondly, we consider the inequality 
\begin{equation}\label{hohoho}
\lambda  a_1 + \mu b_0 +  \sum_{j = 0}^{r} a_{j + 1}  + \sum_{j = 0}^{r} b_{j + 1}  \le  a_{r' + 2} + b_{r' + 2} + 
\sum_{j = 0}^{r + r'} \lambda_j a_{r' + 3 + j} + \sum_{j = 0}^{r + r'} \mu_j b_{r' + 3 + j}, 
\end{equation}
for $d \ge 2r' + r + 6$ and  $r > 0$. 
As above, we first deduce that it suffices to bound the following sum in terms of contributions to the right hand side of \eqref{hohoho}:
\[
T := \sum_{k = 0}^r \lbrack \sum_{l = 0}^{2r - k} a(k - 1,l - 1) + b(k, l) + \sum_{l = 2r - k + 1}^{r + r' - k} \frac{r - k + 1}{l - k + 1} (a(k - 1,l - 1) + b(k, l))  
\rbrack.
\]
Changing basis as above gives
\[ 
T=  \sum_{i = 0}^{r } \sum_{j = 0}^{r + r' - i}  \alpha_{i,j}  \sum_{k = 0}^{i} \sum_{l = 0}^{i + j - k} a(k - 1,l - 1) + b(k, l),
\]
and then \eqref{dot2} in Lemma~\ref{christmas} implies that 
\[ 
T \le  \sum_{i = 0}^{r } \sum_{j = 0}^{r + r' - i}  \alpha_{i,j} \; [
 \sum_{q = 0}^{i + j + 1}  
b(r' + 1, r' + 1 + q) 
\]
\[
+
\sum_{p = 0}^{i} \sum_{q = 0}^{i + j - p} a(r' + 1 + p , r' + 1 + q) +  
b(r' + 2 + p, r' + 2 + q)].
\]
As above, changing basis again gives
\[
T \le   S:= \sum_{p = 0}^r \lbrack \sum_{q = 0}^{2r  - p} a(r' + 1 + p,r' + 1 + q) + b(r' + 2 + p,r' + 2 + q) \] \[ + \sum_{q = 2r - p + 1}^{r + r' - p} \frac{r - p + 1}{q - p + 1} (a(r' + 1 + p,r' + 1 + q) + b(r' + 2 + p,r' + 2 + q)  ) \rbrack
\]
\[
 + \sum_{q = 0}^{2r + 1} b(r' + 1, r' + 1 + q) + 
 \sum_{q = 2r + 2}^{r  + r' + 1} \frac{ r + 1}{q} b(r' + 1, r' + 1 + q).
\]
If we now run the rest of the proof of Lemma~\ref{full4}, considering the contributions of $a(p,q)$ and $b(p,q)$ separately, the proof follows verbatim.

Thirdly, we consider the inequality 
\begin{equation}\label{hoho3}
\lambda  a_1 + \mu b_0  + \sum_{j = 0}^{r} b_{j + 1}  \le  \sum_{j = 0}^{\alpha} b_{r' + 2 + j} + 
\sum_{j = 0}^{r + r' - 2 \alpha} \lambda_j a_{r' + \alpha +  3 + j} + \sum_{j = 0}^{r + r'} \mu_j b_{r' + \alpha + 3 + j}, 
\end{equation}
for $d \ge 2r' + r  + \alpha + 6$ and $0 \le \alpha \le r + 1$. 
As above, we first deduce that it suffices to bound the following sum in terms of contributions to the right hand side of \eqref{hoho3}:
\[
T := \sum_{k = 0}^r \lbrack \sum_{l = 0}^{2r - k} b(k, l) + \sum_{l = 2r - k + 1}^{r + r' - k} \frac{r - k + 1}{l - k + 1} b(k, l)  
\rbrack.
\]
Changing basis as above gives
\[ 
T=  \sum_{i = 0}^{r } \sum_{j = 0}^{r + r' - i}  \alpha_{i,j}  \sum_{k = 0}^{i} \sum_{l = 0}^{i + j - k} b(k, l),
\]
and then \eqref{dot3} in Lemma~\ref{christmas} implies that 
\[ 
T \le  \sum_{i = 0}^{r } \sum_{j = 0}^{r + r' - i}  \alpha_{i,j} \; [
\sum_{p = \alpha}^{i} \sum_{q = 0}^{i + j -p} a(r' + 1 + p, r' + 1 + q) 
\]
\[
+
\sum_{p = 0}^{\alpha} \sum_{q = 0}^{\alpha + i + j + 1} b(r' + 1 + p, r' + 1 + q) + 
\sum_{p = 0}^{i } \sum_{q = 0}^{i + j   - p} b(r' + \alpha + 2 + p,r' + \alpha +  2 + q)].
\]
Changing basis again gives, after a short calculation, 
\[
T \le   S:= \sum_{p = 0}^r \lbrack \sum_{q = 0}^{2r  - p} b(r' + 2 + \alpha + p,r' + 2 + \alpha + q)  \] \[ + \sum_{q = 2r - p + 1}^{r + r' - p} \frac{r - p + 1}{q - p + 1} b(r' + 2 + \alpha + p,r' + 2 + \alpha + q  ) \rbrack
\]
\[
+ \sum_{p = \alpha}^r \lbrack \sum_{q = 0}^{2r  - p} a(r' + 1 + p,r' + 1 + q)   + \sum_{q = 2r - p + 1}^{r + r' - p} \frac{r - p + 1}{q - p + 1} a(r' + 1 +  p,r' + 1 + q  ) \rbrack.
\]
\[
 + \sum_{p = 0}^{\alpha} [\sum_{q = 0}^{2r + \alpha + 1} b(r' + 1 + p, r' + 1 + q) + 
\sum_{q = 2r + \alpha + 2}^{r  + r' + \alpha + 1} \frac{ r + 1}{q - \alpha} \; b(r' + 1 + p, r' + 1 + q)].
\]
As before, the rest of the proof of Lemma~\ref{full4} holds for the first two terms in this sum.
The second two terms can be rewritten, by replacing $p$ and $q$ with $p + \alpha$ and $q + \alpha$, respectively, as: 
\begin{align*}
\sum_{p = 0}^{r - \alpha} \lbrack \sum_{q = 0}^{2(r - \alpha)  - p} &a( r' + \alpha + 1 + p,r' + \alpha + 1 + q)   \\
 &+ \sum_{q = 2(r - \alpha) - p + 1}^{(r - \alpha) + (r' - \alpha) - p} \frac{(r - \alpha) - p + 1}{q - p + 1} a(r' + 
\alpha + 1 +  p, r' + \alpha + 1 + q  ) \rbrack.
\end{align*}
The proof of Lemma~\ref{full4} now goes through almost unchanged with $r$ replaced by $r - \alpha$ and $r'$ replaced by $r' - \alpha$,
and we are left with considering the remaining term
in $S$: 
\[
\sum_{p = 0}^{\alpha} [\sum_{q = 0}^{2r + \alpha + 1} b(r' + 1 + p, r' + 1 + q) + 
\sum_{q = 2r + \alpha + 2}^{r  + r' + \alpha + 1} \frac{ r + 1}{q - \alpha} \; b(r' + 1 + p, r' + 1 + q)].
\]
We argue as in the proof of Lemma~\ref{full4}. 
Consider a lattice point  $v \in \BOX(F)$, for some non-empty face $F \nsubseteq \partial P$ in 
$\mathcal{T}'$ 
 and  write $(u(v), u(-v)) = ((r' + 1 + p) + 2, d - 1 -(r' + 1 + q))$ and $h_{F}(t) = \sum_{i = 0}^{q - p} h_i$. 

First, suppose that $0 \le p \le \alpha$ and $q \le 2r + \alpha + 1$. 
In this case, the corresponding contribution to $S$ is  $\sum_{i = 0}^{q - p} h_i$ and the contribution to the right hand side of \eqref{hoho3} is $h_0 + \cdots + h_{\alpha - p} + \sum_{i = 0}^{q - \alpha - 1} \mu_i h_{i + \alpha - p + 1}$.  If $q \le \alpha$, then both contributions equal  $\sum_{i = 0}^{q - p} h_i$, and hence we may assume that $q > \alpha$. We want to show that the vector $(v_0, \ldots, v_{q - p - 1}) := (1,\ldots,1, \mu_0, \ldots, \mu_{q - \alpha - 1})$ satisfies the conditions of Corollary~\ref{coke} with $\beta = 1$.
Putting $p = 0$ and replacing $q$ with $q - \alpha - 1$ in condition \eqref{numero3} gives
\[
\mu_{i} + \cdots + \mu_{q - \alpha - i - 1} \ge q - \alpha - 2i,
\]
for  
$0 \le  i \le \lfloor \frac{q - \alpha - 1}{2} \rfloor \le r$.
Moreover, condition \eqref{numero1} implies that $\mu_0, \ldots, \mu_{r} \ge 1$
 and we conclude that  $v_i + \cdots + v_{q - p - 1 - i} \ge q - p - 2i$ for $0 \le  i \le \lfloor \frac{q - \alpha - 1}{2} \rfloor$. If  $ \lfloor \frac{q - \alpha - 1}{2} \rfloor <  i \le \lfloor \frac{q - p - 1}{2} \rfloor$, then one checks that 
 $q - \alpha - 1 - (\frac{q - \alpha - 1}{2}) \le r$ since $q \le 2r + \alpha + 1$, and hence condition \eqref{numero1} implies that
$v_i + \cdots + v_{q - p - 1 - i} \ge q - p - 2i$.  We conclude that Corollary~\ref{coke} applies  with $\beta = 1$ and gives
 $\sum_{i = 0}^{q - p} h_i \le h_0 + \cdots + h_{\alpha - p} + \sum_{i = 0}^{q - \alpha - 1} \mu_i h_{i + \alpha - p + 1}$, as desired.





Finally, let us consider the case when $0 \le p \le \alpha$ and $2r + \alpha + 2 \le q \le r + r' + \alpha + 1$. As above, the  contribution to $S$ is  $\frac{r + 1}{q - \alpha} \sum_{i = 0}^{q - p} h_i$ and the contribution to the right hand side of \eqref{hoho3} is $h_0 + \cdots + h_{\alpha - p} + \sum_{i = 0}^{q - \alpha - 1} \mu_i h_{i + \alpha - p + 1}$.  
Our goal is to show that the vector $(v_0, \ldots, v_{q - p - 1}) := (1,\ldots,1, \mu_0, \ldots, \mu_{q - \alpha - 1})$ satisfies the  conditions of Corollary~\ref{coke} with $\beta = \frac{r + 1}{q - \alpha}$ and conclude that $\frac{r + 1}{q - \alpha}  \sum_{i = 0}^{q - p} h_i \le h_0 + \cdots + h_{\alpha - p} + \sum_{i = 0}^{q - \alpha - 1} \mu_i h_{i + \alpha - p + 1}$.

Putting $p = 0$ and replacing $q$ with $q - \alpha - 1$ in condition \eqref{numero4} gives
\[
\mu_{i} + \cdots + \mu_{q - \alpha - i - 1} \ge (q - \alpha - 2i)\frac{r + 1}{q - \alpha},
\] 
for 
$0 \le i \le \lfloor \frac{q - \alpha - 1}{2} \rfloor$.
For $i \le \alpha - p$, condition \eqref{numero1} and the above inequality imply that
\[
\alpha - p - i + 1 + \mu_0 + \cdots + \mu_{q - \alpha - 1 - i}  \ge \alpha - p + 1 + (q - \alpha - 2i)\frac{r + 1}{q - \alpha}.
\]
One verifies that the right hand side is greater than or equal to $(q - p - 2i + 1)\frac{r + 1}{q - \alpha}$
if and only if $q \ge r + \alpha + 1$, which holds by assumption. 

For $\alpha - p + 1 \le i \le  r + 1$, using condition \eqref{numero1} we obtain
\begin{align*}
\mu_{i - \alpha + p - 1} + \cdots + \mu_{q - \alpha - 1 - i} &\ge \mu_{i - \alpha + p - 1} + \cdots + \mu_{i - 1} + 
(q - \alpha - 2i)\frac{r + 1}{q - \alpha} \\
&\ge \alpha - p + 1 + (q - \alpha - 2i)\frac{r + 1}{q - \alpha}.
\end{align*}
One verifies that the right hand side is greater than or equal to $(q - p - 2i + 1)\frac{r + 1}{q - \alpha}$
if and only if $q \ge r + \alpha + 1$, which holds by assumption. 

For  $r + 2 \le i \le  \lfloor \frac{q - \alpha - 1}{2} \rfloor$, we have $i - 1 \le \lfloor \frac{q - \alpha - 3}{2} \rfloor \le  \lfloor \frac{r + r' - 2}{2} \rfloor$ since $q \le r + r' + \alpha + 1$. 
Hence, using  condition \eqref{numero2}, we compute 
\begin{align*}
\mu_{i - \alpha + p - 1} + \cdots + \mu_{q - \alpha - 1 - i} &\ge \mu_{i - \alpha + p - 1} + \cdots + \mu_{i - 1} + 
(q - \alpha - 2i)\frac{r + 1}{q - \alpha} \\
&\ge (\alpha - p + 1) \frac{r + 1}{2i - 1} + (q - \alpha - 2i)\frac{r + 1}{q - \alpha}.
\end{align*}
One verifies that the right hand side is greater than or equal to $(q - p - 2i + 1)\frac{r + 1}{q - \alpha}$
if and only if $i \le  \frac{q - \alpha + 1}{2}$, which holds by assumption.

It remains to consider the case when  $\lfloor \frac{q - \alpha - 1}{2} \rfloor < i \le \lfloor \frac{q - p - 1}{2} \rfloor$. One verifies that $q - \alpha - 1 - \frac{q - \alpha - 1}{2} \le \frac{r + r'}{2}$ if and only if $q \le r + r' + \alpha + 1$, which holds by assumption. Hence condition \eqref{numero2} implies that
\[
\mu_{i - \alpha + p - 1} + \cdots + \mu_{q - \alpha - 1 - i} \ge \frac{r + 1}{2q - 2\alpha - 2i - 1} (q - p - 2i + 1)
\]
One verifies that the right hand side is greater than or equal to $(q - p - 2i + 1)\frac{r + 1}{q - \alpha}$
if and only if $i \ge  \frac{q - \alpha - 1}{2}$, which holds by assumption.

We conclude that 
the vector $(v_0, \ldots, v_{q - p - 1}) := (1,\ldots,1, \mu_0, \ldots, \mu_{q - \alpha - 1})$ satisfies the  conditions of Corollary~\ref{coke} with $\beta  = \frac{r + 1}{q - \alpha}$ and that $\frac{r + 1}{q - \alpha}  \sum_{i = 0}^{q - p} h_i \le h_0 + \cdots + h_{\alpha - p} + \sum_{i = 0}^{q - \alpha - 1} \mu_i h_{i + \alpha - p + 1}$.
This completes the proof of Theorem~\ref{variant}.

Corollary~\ref{dos} follows directly from Theorem~\ref{variant} and Lemma~\ref{plan}.

\section{$h^*$-polynomials in low dimension}\label{last}

In this section, we 
 consider the question of when the results of Theorem~\ref{variant} give all possible balanced inequalities for $h^*$-polynomials of lattice polytopes with an interior lattice point.

Let us fix our notation in this section. Recall that if $P$ is a $d$-dimensional lattice polytope containing an interior lattice point, then the coefficients of the $h^*$-polynomial $(h^*_0, h^*_1, \ldots , h^*_d)$ are positive integers and $h^*_0 = 1$. We set 
\[
x_i = h^*_i - 1 \textrm{ for } 1 \le i \le d,
\]
and let $C(d) \subseteq \R^d$ denote the cone generated by all vectors of the form $(x_1, \ldots, x_d)$, for some $d$-dimensional  lattice polytope $P$ containing an interior lattice point. Recall that an inequality $\sum_{i = 0}^{d} \beta_i h^*_i \ge 0$ is \emph{balanced} if $\sum_{i = 0}^{d} \beta_i = 0$, and that all known inequalities are balanced. 
Observe that, with this notation, determining all balanced inequalities in dimension $d$ is equivalent to describing the facets of $C(d)$. 

We now consider a class of examples used by Payne in  \cite{PayEhrhart}. 
Consider positive integers $\alpha_0 \ge \alpha_{1} \ge \cdots \ge \alpha_d$ with no common factor and let $N = \Z^{d + 1}/(\sum_{i = 0}^{d} \alpha_{i} e_{i} = 0)$, where $e_0, \ldots, e_d$ denotes the standard basis of $\Z^{d + 1}$.  Observe that $N$ is a  lattice  of 
rank $d$ and, if 
$P(\alpha_{0}, \ldots, \alpha_{d})$ denotes the convex hull of the images of  $e_{0}, \ldots, e_{d}$, then $P(\alpha_{0}, \ldots, \alpha_{d})$ is a lattice polytope containing the origin in its interior.
The key point is that one can explicitly compute the $h^*$-polynomial of $P(\alpha_{0}, \ldots, \alpha_{d})$. The following lemma is implicit in \cite{PayEhrhart}. 

\begin{lemma}\label{compute}
With the notation above, the Ehrhart $h^*$-polynomial $h^*(\alpha_0, \ldots, \alpha_d)(t)$ of the lattice polytope  
$P(\alpha_{0}, \ldots, \alpha_{d})$ is given by the formula:
\[
h^*(\alpha_0, \ldots, \alpha_d)(t) = \sum_{i = 0}^{d} \sum_{j = 0}^{\alpha_i - 1}
t^{     \lceil   \sum_{0 \le k \le d, k \ne i}     \frac{j \alpha_k}{\alpha_i} - \lfloor \frac{j \alpha_k}{\alpha_i} \rfloor            \rceil +  \sum_{k = i + 1}^d \phi( \frac{j \alpha_k}{\alpha_i})     },
\]
where $\phi(x) = 1$ if $x$ is an integer and $\phi(x) = 0$, otherwise. 
\end{lemma}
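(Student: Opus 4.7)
The plan is to compute $h^*(\alpha_0, \ldots, \alpha_d)(t)$ by enumerating lattice points in the half-open fundamental parallelepiped $\Pi$ of the cone $C$ over $P(\alpha_0, \ldots, \alpha_d) \times \{1\}$ in $N_\R \times \R$, and then reorganize the resulting sum into the stated double-sum form. Since $P(\alpha_0, \ldots, \alpha_d)$ is a lattice simplex with vertices $\bar{e}_0, \ldots, \bar{e}_d$, the standard identity for $h^*$-polynomials of lattice simplices gives
\[
h^*(\alpha_0, \ldots, \alpha_d)(t) = \sum_{w \in \Pi \cap (N \times \Z)} t^{u(w)},
\]
where $\Pi = \{\sum_{i=0}^d \lambda_i (\bar{e}_i, 1) : \lambda_i \in [0, 1)\}$ and $u$ is projection onto the second coordinate.

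To enumerate $\Pi \cap (N \times \Z)$ explicitly, I would use the linear isomorphism $\R^{d+1} \to N_\R \times \R$ defined by $(\lambda_0, \ldots, \lambda_d) \mapsto (\sum_i \lambda_i \bar{e}_i, \sum_i \lambda_i)$. This is bijective because $\gcd(\alpha_0, \ldots, \alpha_d) = 1$; it identifies $\Pi$ with $[0, 1)^{d+1}$ and pulls $N \times \Z$ back to the overlattice $\Z^{d+1} + \frac{1}{S}\Z(\alpha_0, \ldots, \alpha_d)$ of index $S$ in $\Z^{d+1}$. Consequently $\Pi \cap (N \times \Z)$ has exactly $S$ elements, the $k$-th ($k = 0, 1, \ldots, S-1$) being the vector with coordinates $\lambda_i = \{k\alpha_i/S\}$ and height $\sum_i \{k\alpha_i/S\}$. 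This gives the intermediate formula $h^*(\alpha_0, \ldots, \alpha_d)(t) = \sum_{k = 0}^{S-1} t^{\sum_i \{k\alpha_i/S\}}$.

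To recover the stated double-sum form, I would re-index these $S$ lattice points via the star triangulation of $P(\alpha_0, \ldots, \alpha_d)$ from the interior lattice point $0$, whose $d+1$ maximal simplices $\tilde P_i = \mathrm{conv}(0, \bar{e}_k : k \ne i)$ have normalized volumes $\alpha_0, \ldots, \alpha_d$. Decomposing $C$ into the sub-cones $C_{\tilde P_i}$ generated by $(0, 1)$ and the $(\bar{e}_k, 1)$ for $k \ne i$, with a half-open convention that processes the indices $i = 0, 1, \ldots, d$ in order and opens the facets of $\tilde P_i$ shared with the as-yet-unprocessed $\tilde P_k$ for $k > i$, partitions $\Pi \cap (N \times \Z)$ into $d+1$ blocks of sizes $\alpha_0, \ldots, \alpha_d$. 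Within the $i$-th block, the $j$-th lattice point represents the class $j\bar{e}_i$ in $N/\mathrm{span}_\Z(\bar{e}_k : k \ne i) \cong \Z/\alpha_i$, and its height is computed by writing $j\bar{e}_i = \sum_{k \ne i}(-j\alpha_k/\alpha_i)\bar{e}_k$ and integer-shifting each coefficient into its prescribed half-open interval. The shift in the central $(0, 1)$-direction produces the ceiling term $\lceil \sum_{k \ne i}\{j\alpha_k/\alpha_i\} \rceil$, while the opened facets contribute the correction $\sum_{k > i} \phi(j\alpha_k/\alpha_i)$, matching the stated exponent.

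The main obstacle is verifying that a consistent choice of half-open conventions produces simultaneously a bijection \emph{and} the precise height decomposition above. The indicator $\phi(j\alpha_k/\alpha_i)$ captures the boundary case where a lattice point lies exactly on a facet shared between $\tilde P_i$ and $\tilde P_k$ (i.e., when $j\alpha_k/\alpha_i \in \Z$), and showing that the ordered half-open rule assigns every such boundary point to the sub-cone with the smaller index $i$ — and therefore produces exactly the indicator correction in the exponent — is the combinatorial heart of the argument.
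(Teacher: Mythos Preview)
Your approach is correct, and both you and the paper ultimately rely on the star triangulation of $P(\alpha_0,\ldots,\alpha_d)$ from the interior origin, but the routes differ. The paper does not pass through the intermediate formula $\sum_{k=0}^{S-1} t^{\sum_i \{k\alpha_i/S\}}$ at all; instead it applies Betke and McMullen's identity (Theorem~\ref{bandm}), $h^*(t)=\sum_{F\in\mathcal{T}} B_F(t)\,h_F(t)$, directly to the star triangulation $\mathcal{T}$ whose maximal faces $G_i$ are the convex hulls of $0$ and $\{\bar e_k:k\ne i\}$. Since each $G_i$ is a simplex with $N(G_i)\cong\Z/\alpha_i\Z$, one reads off $B_F$ for every face, and because every $h_F(t)$ here is of the form $1+t+\cdots+t^m$ with $h_F(1)$ equal to the number of maximal $G_i$ containing $F$, the shift $\sum_{k>i}\phi(j\alpha_k/\alpha_i)$ appears automatically when one redistributes each box point over the maximal simplices containing it.

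Your half-open decomposition of the cone into the sub-cones $C_{\tilde P_i}$, processed in order of $i$, is an equally valid route to the same redistribution; it is essentially the combinatorial proof of Betke--McMullen specialised to this triangulation. What you gain is a self-contained argument that does not cite Theorem~\ref{bandm}; what you pay is the bookkeeping you flag as the ``main obstacle'' (checking that the ordered half-open rule assigns each shared-facet point to the smaller index and that the central $(0,1)$-coordinate produces the ceiling). The paper sidesteps that bookkeeping entirely because the $h$-polynomial shifts encode it. Your detour through the fundamental parallelepiped of $P$ itself is unnecessary for the final statement, though it is a perfectly good sanity check that $\sum_i\alpha_i$ terms appear in total.
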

\begin{proof}
Let $\mathcal{T}$ denote the triangulation of $P = P(\alpha_{0}, \ldots, \alpha_{d})$ with maximal faces $G_i$ given by 
the convex hull of the origin and $e_0, \ldots, \hat{e}_i, \ldots , e_d$, for $0 \le i \le d$. We will use Betke and McMullen's formula \eqref{bandm} to compute the Ehrhart $h^*$-polynomial of $P$. 

Observe that the finite group $N(G_i) = N \times \Z /(\Z e_0 + \cdots + \Z \hat{e}_i + \cdots + \Z e_d + \Z e_{d + 1})$ is isomorphic to the cyclic group $\Z/\alpha_i \Z$, where $e_{d + 1}$ denotes the co-ordinate of the second factor in $N \times \Z$.
The elements in $\coprod_{F \subseteq G_i} \BOX(F)$ are given by 
\[
v_{i,j} = (- je_i, \gamma_{i,j} ) = \sum_{0 \le k \le d, k \ne i} (\frac{j \alpha_k}{\alpha_i} - \lfloor \frac{j \alpha_k}{\alpha_i} \rfloor)   
(e_k, 1)  + \beta_{i,j}e_{d + 1} \textrm{ for } 0 \le j \le \alpha_i - 1,
\] 
for some $0 \le \beta_{i,j} < 1$, and where 
\[
u(v_{i,j}) = \gamma_{i,j} = \lceil   \sum_{0 \le k \le d, k \ne i}     \frac{j \alpha_k}{\alpha_i} - \lfloor \frac{j \alpha_k}{\alpha_i} \rfloor            \rceil. 
\]
If $F$ is a face of $\mathcal{T}$, then $h_F(t) = 1 + t + \cdots + t^{d - 1 - \dim F}$ if $F \subseteq \partial P$, and 
$h_F(t) = 1 + t + \cdots + t^{d - \dim F}$ otherwise. Moreover, $h_F (1)$ is equal to the number of maximal cones $G_i$ containing $F$. 
Hence, for a fixed $v \in \BOX(F)$, to determine the contribution of $t^{u(v)} h_{F}(t)$ to 
$h^*(\alpha_0, \ldots, \alpha_d)(t)$, we need to sum the contribution of $t^{u(v)}$ above, shifted appropriately, over  every maximal face $G_i$ containing $v$.  The formula now follows easily. 



\end{proof}

\begin{example}
If $\alpha_0 = \cdots = \alpha_d$, then $P(\alpha_{0}, \ldots, \alpha_{d}) \subseteq \R^d$ is the standard reflexive simplex with vertices $e_1, \ldots, e_d$ and $-e_1 - \cdots - e_d$, and $h^*(\alpha_0, \ldots, \alpha_d)(t) = 1 + t + \cdots + t^d$. 
\end{example}

Let $C'(d) \subseteq \R^d$ denote the cone generated by the inequalities of Theorem~\ref{refinement}:
\[ 0 \leq x_{d} \leq x_{1}, \] 
\begin{equation*}
x_{1} + \cdots + x_{i} \le
 x_{d - 1} + \cdots + x_{d - i} \le x_{2} + \cdots
+ x_{i + 1}. 
\end{equation*}
for $i = 1, \ldots, \lfloor \frac{d - 1}{2} \rfloor$.  By Theorem~\ref{refinement}, we have the inclusion $C(d) \subseteq C'(d)$, and one easily verifies that $C'(d)$ is a smooth $d$-dimensional cone with rays through $\{ e_{i + 1} + e_{d - i - 1} \mid 1 \leq i \leq \lfloor \frac{d - 1}{2}  \rfloor \}$, 
$\{ e_{i + 1} + e_{d - i} \mid 1 \leq i \leq \lfloor \frac{d}{2} \rfloor - 1 \}$, 
$\sum_{i = 1}^{d - 1} e_{i}$ and  $\sum_{i = 1}^{d} e_{i}$. 
The following theorem is equivalent to  Theorem~\ref{all5}. 

\begin{theorem}
With the notation above, $C(d) = C'(d)$ if and only if $d \le 5$
\end{theorem}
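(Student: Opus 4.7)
The inclusion $C(d) \subseteq C'(d)$ is immediate from Theorem~\ref{refinement}, so the theorem reduces to showing that the reverse inclusion $C'(d) \subseteq C(d)$ fails precisely when $d \ge 6$. The plan is two-pronged: realise the (finitely many) primitive rays of the simplicial cone $C'(d)$ by explicit lattice polytopes for $d \le 5$, and exhibit a single uniform vector that lies in $C'(d) \setminus C(d)$ for every $d \ge 6$.

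For the positive direction I would exploit the fact that $C'(d)$ is a simplicial cone with exactly $d$ primitive rays, which can be read off directly by intersecting the facet inequalities of Theorem~\ref{refinement} $(d-1)$ at a time. Each such ray I would realise as the shifted $h^*$-vector of some member of Payne's family $P(\alpha_0, \ldots, \alpha_d)$ introduced before Lemma~\ref{compute}, using the explicit formula of that lemma. A direct calculation shows that the one-parameter sub-family $P(k+1, 1, 1, \ldots, 1)$ for $k = 1, \ldots, d$ already covers most of the rays: the $i = 0$ contribution adds an extra $+1$ to $h^*$ exactly at the $k$ positions $\lceil dm/(k+1) \rceil$ for $m = 1, \ldots, k$ (a symmetric pattern around $d/2$), while the $i \ge 1$ contributions only produce the palindromic padding $1 + t + \cdots + t^d$. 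In dimensions $d \le 4$ every ray is hit this way. In dimension $5$ the only missing ray is $e_2 + e_3$, which I would realise by $P(2, 2, 1, 1, 1, 1)$: the extra $\phi$-contribution in the $(i,j) = (0,1)$ term bumps its exponent up from $t^2$ to $t^3$, while the $(i,j) = (1,1)$ term lands in $t^2$, producing the required asymmetric boost.

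For the negative direction I would take as a uniform witness the vector
\[
v := e_2 + e_{d-1} = (0, 1, 0, \ldots, 0, 1, 0) \in \R^d.
\]
That $v \in C'(d)$ is a direct check: the inequalities of Theorem~\ref{refinement} pair the positions $2$ and $d-1$ symmetrically on the two sides, so each nontrivial inequality evaluates to either $1 \le 1$ or $0 \le 1$ on $v$. On the other hand, Theorem~\ref{variant}(\ref{try3}) applied with $r = r' = \alpha = 0$ holds precisely when $d \ge 6$ and yields $a_1 + b_0 + b_1 \le a_3 + b_2 + b_3$, which by the translation carried out in Example~\ref{example2} is the balanced inequality $h^*_1 + h^*_2 \le h^*_{d-3} + h^*_{d-2}$, i.e.\@ $x_1 + x_2 \le x_{d-3} + x_{d-2}$. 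Evaluated on $v$ this is $1 \le 0$, a contradiction; hence $v \notin C(d)$, and in particular $C(d) \subsetneq C'(d)$ for every $d \ge 6$.

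The main obstacle is the case analysis in the positive direction. The list of rays displayed in the remark immediately preceding the theorem contains duplicates when $d$ is odd (two distinct indices in the first set produce the same vector, e.g.\@ $e_2 + e_3$ appears twice in dimension $5$) and degenerate generators like $2 e_{d/2}$ when $d$ is even, so one must first extract the correct primitive generators and match them against the rays obtained directly from the facet description. Then for each of the $d$ rays one must choose a suitable Payne polytope and verify via Lemma~\ref{compute} that the right $h^*$-coefficients, and only those, get incremented; in dimension $5$ the two off-diagonal rays $e_2 + e_3$ and $e_2 + e_4$ are close but require qualitatively different recipes, reflecting the fact that the formula of Lemma~\ref{compute} is a sum of ceilings indexed by pairs $(i,j)$ and the arithmetic of these ceilings does not reduce to a single tidy pattern.
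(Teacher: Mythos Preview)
Your proposal is correct and follows essentially the same approach as the paper: realise the extremal rays of $C'(d)$ by explicit Payne simplices $P(\alpha_0,\dots,\alpha_d)$ for $d\le 5$, and for $d\ge 6$ show that the vector $e_2+e_{d-1}\in C'(d)$ violates the balanced inequality $x_1+x_2\le x_{d-3}+x_{d-2}$ coming from Example~\ref{example2}. The only cosmetic difference is that you systematically use the one-parameter family $P(k+1,1,\dots,1)$ where the paper instead uses $P(2,\dots,2,1,\dots,1)$ for several rays; both choices work, and your observation that this single family already hits all rays for $d\le 4$ and all but $e_2+e_3$ for $d=5$ is a nice streamlining of the case analysis.
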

\begin{proof}
Using Lemma~\ref{compute},  in Figure~\ref{hoot} we realize all the primitive integer vectors of the rays of $C'(d)$  as coefficients of  $h^*$-polynomials of polytopes of the form
 $P(\alpha_{0}, \ldots, \alpha_{d})$, for $d \le 5$.   This shows that $C(d) = C'(d)$ in dimension at most $5$. 
 
 On the other hand, for $d \ge 6$, we claim that the vector $e_2 + e_{d - 1}$ in $C'(d)$ cannot be realized by the coefficients of an $h^*$-polynomial. For if there existed a polytope with $h^*$-polynomial
 $h^*(t) = 1 + t + 2t^2 + t^{3} + \cdots + t^{d - 2} + 2t^{d -1} + t^d$, then
  $a(t) = 1 + t + \cdots + t^{d}$ and $b(t) = t + t^{d - 2}$, and one computes that  $a_1 + b_0 + b_1 = 2 > a_3 + b_2 + b_3 = 1$, contradicting Example~\ref{example2}.  

\begin{figure}[htb]

\begin{tabular}{ | l |  l | l |}
\hline
Polytope & Ray in $C(d)$ & 
Dimension 
\\
\hline
$P(2,1)$ & $(1)$ &  $d = 1$ \\
$P(2,1,1)$ & $(1,0)$ &  $d = 2$ \\
 $P(2,2,1)$
  & $(1,1)$ & $d = 2$ \\
  $P(2,1,1,1)$ & $(0,1,0)$ & $d = 3$ \\ 
    $P(2,2,1,1)$ & $(1,1,0)$ & $d = 3$ \\ 
     $P(2,2,2,1)$ & $(1,1,1)$ & $d = 3$ \\ 
   $P(2,1,1,1,1)$ & $(0,1,0,0)$ & $d = 4$ \\
 $P(2,2,1,1,1)$ & $(0,1,1,0)$ & $d = 4$ \\
  $P(2,2,2,1,1)$ & $(1,1,1,0)$ & $d = 4$ \\ 
    $P(2,2,2,2,1)$ & $(1,1,1,1)$ & $d = 4$ \\ 
     $P(2,1,1,1,1,1)$ & $(0,0,1,0,0)$ & $d = 5$ \\
 $P(2,2,1,1,1,1)$  & $(0,1,1,0,0)$ & $d = 5$ \\
 $P(3,1,1,1,1,1)$ & $(0,1,0,1,0)$ & $d = 5$ \\ 
  $P(2,2,2,2,1,1)$ & $(1,1,1,1,0)$ & $d = 5$ \\ 
    $P(2,2,2,2,2,1)$ & $(1,1,1,1,1)$ & $d = 5$ \\ 
\hline 
\end{tabular}

 \caption{Realization of the rays of $C(d)$ for $d \le 5$.}
        \label{hoot}
        \end{figure}

\end{proof}

\begin{remark}
The cone $C(d)$ will not be smooth for $d \ge 6$, and we expect that the cone becomes more and more complicated as $d$ increases. On the other hand, we conjecture that $C(d)$ is a $d$-dimensional, rational polyhedral cone. 
\end{remark}

In the case when $d = 6$, the inequalities of Theorem~\ref{six} define a cone $C''(6)$ with rays through the vectors $(0,0,1,0,0,0)$, $(0,0,1,1,0,0)$, $(0,1,0,1,0,0)$, $(0,1,1,0,1,0)$, $(1,1,1,1,1,0)$,  $(1,1,1,1,1,1)$ and $(0,2,1,1,2,0)$. We realize all but one of these rays in Figure~\ref{cmon}, as well as the vector $(1,3,2,2,3,1)$. 

\begin{figure}[htb]

\begin{tabular}{ | l |  l |}
\hline
Polytope & Vector in $C(6)$ 
\\
\hline
$P(2,1,1,1,1,1,1)$  & $(0,0,1,0,0,0)$\\
 $P(2,2,1,1,1,1,1)$
 & $(0,0,1,1,0,0)$  \\
 $P(3,1,1,1,1,1,1)$  &  $(0,1,0,1,0,0)$\\ 
 $P(4,1,1,1,1,1,1)$  &  $(0,1,1,0,1,0)$\\  
  $P(2,2,2,2,2,1,1)$
  & $(1,1,1,1,1,0)$ \\ 
   $P(2,2,2,2,2,2,1)$
  & $(1,1,1,1,1,1)$ \\ 
    $P(8,2,2,2,2,2,1)$ & $(1,3,2,2,3,1)$\\ 
\hline 
\end{tabular}

 \caption{Realization of vectors in $C(6)$.}
        \label{cmon}
        \end{figure}

Recall that an inequality $\sum_{i = 1}^{d} \beta_i h^*_i \ge 0$ is \emph{strictly balanced} if $\sum_{i = 1}^{d} \beta_i = 0$, and observe that determining all strictly balanced inequalities in dimension $d$ is equivalent to describing the image $\tilde{C}(d)$ of $C(d)$ under the projection $\pi: \R^{d} \rightarrow \R^d/ \R(1,\ldots, 1)$. In the case when $d = 6$, we have shown that every ray of $C''(6)$ lies in $C(6)$ except 
the ray through $(0,2,1,1,2,0)$. On the other hand, the vector $(1,3,2,2,3,1)$ is realized in Figure~\ref{cmon}, and $(1,3,2,2,3,1) = (0,2,1,1,2,0) + (1,1,1,1,1,1)$.  We conclude that $\tilde{C}(6) = \pi(C''(6))$, completing the proof of Theorem~\ref{six}. 

On the other hand, if we allow more general objects than polytopes, as in Remark~\ref{noncon}, then we can realize the ray $(0,2,1,1,2,0)$. The following example was constructed with Christian Haase and Sam Payne. Let $P$ be the standard reflexive simplex in $\R^6$ with vertices $e_1, \ldots, e_6$ and 
$-e_1 - \ldots - e_7$, and let $\mathcal{T}$ be the triangulation with maximal faces given by the convex hulls of the origin and the maximal faces of the boundary of $P$.  Let $Q$ be the polytopal complex with the same maximal faces as $\mathcal{T}$, in which we let every maximal face be a lattice polytope with respect to the usual lattice structure except the maximal face containing $e_1, \ldots, e_d$, which we regard as a lattice polytope with respect to the lattice $\Z^6 + \Z \cdot \frac{1}{7}(4,4,2,1,1,1)$. 
Theorem \eqref{bandm} holds and allows us to compute, after a short calculation, that $h^*_Q(t) = 1 + t + 3t^2 + 2t^3 + 2t^4 + 3t^5 + t^6$, as desired.

Recall from Example~\ref{reflex} that a polytope $P$ is reflexive if and only if its $h^*$-polynomial is symmetric. Hence,  to describe the balanced inequalities for the $h^*$-polynomials of reflexive polytopes in dimension $d$, one needs to compute the cone $S(d) \subseteq C(d)$ given by the intersection of the cone $C(d)$ with the 
linear hyperplanes $\{ x_d = 0 \}$ and  $\{ x_i = x_{d - i} \mid 1 \le i \le \lfloor \frac{d}{2} \rfloor \}$. In Figure~\ref{hoot2}, we explicitly describe the rays of $S(d)$ for $d \le 6$, while the facets are defined by the inequalities in Example~\ref{reflex}. 


\begin{figure}[htb]

\begin{tabular}{ | l |  l | l |}
\hline
Polytope & Ray in $C(d)$ & 
Dimension 
\\
\hline
$P(2,1,1)$ & $(1,0)$ &  $d = 2$ \\
    $P(2,2,1,1)$ & $(1,1,0)$ & $d = 3$ \\ 
$P(2,1,1,1,1)$ & $(0,1,0,0)$ & $d = 4$ \\
  $P(2,2,2,1,1)$ & $(1,1,1,0)$ & $d = 4$ \\ 
    $P(2,2,1,1,1,1)$  & $(0,1,1,0,0)$ & $d = 5$ \\
  $P(2,2,2,2,1,1)$ & $(1,1,1,1,0)$ & $d = 5$ \\ 
   $P(2,1,1,1,1,1,1)$ & $(0,0,1,0,0,0)$ & $d = 6$ \\
 $P(3,1,1,1,1,1,1)$  &  $(0,1,0,1,0,0)$  & $d = 6$  \\ 
  $P(2,2,2,2,2,1,1)$ & $(1,1,1,1,1,0)$ & $d = 6$ \\
   
\hline 
\end{tabular}

 \caption{Realization of the rays of $S(d)$ for $d \le 6$.}
        \label{hoot2}
        \end{figure}

Let $\tilde{S}(d)$ denote the image  of $S(d)$ under the projection $\pi: \R^{d} \rightarrow \R^d/ \R(1,\ldots, 1)$. As above, determining all strictly balanced inequalities for $h^*$-polynomials of reflexive polytopes in dimension $d$ is equivalent to describing the cone $\tilde{S}(d)$. If $d = 7$, then 
$\tilde{S}(7)$ is the $2$-dimensional cone with rays given by the images of  the vectors $v_1 = (0,0,1,1,0,0,0)$
and $v_2 = (1,3,2,2,3,1,0)$ in $\R^7$,  while the facets are defined by the inequalities in Example~\ref{reflex}. 
 The vectors  $v_1$ and $v_2$ are obtained from the $h^*$-polynomials of the polytopes  $P(2,2,1,1,1,1,1,1)$ and $P(10,4,1,1,1,1,1,1)$, respectively. 


\bibliographystyle{amsplain}
\bibliography{alan}

\end{document}